\newtheorem{thm}{Theorem}
\newtheorem*{thm*}{Theorem}
\newtheorem{conj}[thm]{Conjecture}
\newtheorem{prop}[thm]{Proposition}
\newtheorem{cor}[thm]{Corollary}
\newtheorem{rem}[thm]{Remark}
\theoremstyle{definition}
\newcommand\scalemath[2]{\scalebox{#1}{\mbox{\ensuremath{\displaystyle #2}}}}
\newcommand{\pgl}{\PGL(2,\mathbb{C})}
\newcommand{\sldos}{\SL(2,\mathbb{C})}
\DeclareMathOperator{\tr}{tr}
\DeclareMathOperator{\Id}{Id}
\DeclareMathOperator{\GL}{GL}
\DeclareMathOperator{\SL}{SL}
\DeclareMathOperator{\PGL}{PGL}
\DeclareMathOperator{\Gr}{Gr}
\newcommand{\x}{\times}
\newcommand{\CC}{\mathbb{C}} %Complex numbers%
\newcommand{\RR}{\mathbb{R}} %Real numbers%
\newcommand{\ZZ}{\mathbb{Z}} %Integer numbers%
\newcommand{\too}{\longrightarrow}
\newcommand{\im}{\mathrm{im}\,}
\newcommand{\oX}{\overline{X}{}}
\newcommand{\oW}{\overline{W}{}}
\newcommand{\oZ}{\overline{Z}{}}
\author[J. Mart\'{\i}nez]{Javier Mart\'{\i}nez}
\address{Escuela de Arquitectura, Ingenier\'ia y Dise\~no, Universidad Europea,
Calle Tajo s/n, 28670 Villaviciosa de Od\'on, Madrid, Spain}
\email{javier.martinez2@universidadeuropea.es}
\title[E-polynomials of $\PGL(2,\CC)$-character varieties]{E-polynomials 
of  $PGL(2,\CC)$-character varieties of surface groups}
\date{12th May 2017}
\subjclass[2010]{Primary: 14C30. Secondary: 14D20, 14L24, 32J25}
\keywords{Moduli spaces, E-polynomial, character variety, surface group}
\begin{document}

\begin{abstract}
We compute the E-polynomials of the $\pgl$-character varieties associated to surfaces of genus $g$ with one puncture, for any holonomy around it, and compare it with its Langlands dual case, $\sldos$. The study is based on the stratification of the space of
representations and on the analysis of the behaviour of the E-polynomial under fibrations.
\end{abstract}

\maketitle

%%%%%%%%%%%%%%%%%%%%%%%%%%%%%%%%%%%%%%%%%%%%%%%%%%%%%%%%%%%%%%%%%%%%%%%%%%%%%%%%%%%%%%%
\section{Introduction}\label{sec:introduction}
%%%%%%%%%%%%%%%%%%%%%%%%%%%%%%%%%%%%%%%%%%%%%%%%%%%%%%%%%%%%%%%%%%%%%%%%%%%%%%%%%%%%%%%
%%%Non-abelian Hodge theory. Hausel results.
The non-abelian Hodge correspondence for a smooth complex algebraic curve $X$ of genus $g$ stablishes isomorphisms between three moduli spaces: the moduli space of $G$-Higgs bundles on $X$ $\mathcal{M}_{Dol}(G)$, the moduli space of flat $G$-connections $\mathcal{M}_{dR}(G)$ and the character variety or Betti moduli space $\mathcal{M}_{B}(G)$, where in this paper $G=SL(n,\CC),GL(n,\CC),PGL(n,\CC)$. They are homeomorphic and the cohomology of these spaces has mainly been studied from the perspective of $\mathcal{M}_{Dol}$; the Poincar\'e polynomials were computed in \cite{hitchin:1987} for $G=\sldos$, in \cite{gothen:1994} for $G=SL(3,\CC)$ and recently a solution for all $n$ has been obtained \cite{Schiffmann:2014} \cite{MozSch}, as well as recursive formulas for the motive of the moduli of Higgs bundles \cite{garciaprada-heinloth:2013}.

A key aspect is that the Riemann-Hilbert map taking a flat connection to its monodromy is not algebraic, which is reflected in the fact that the mixed Hodge structure on the cohomology of the character variety is different from the Dolbeault and de Rham cases, where they are agree and are pure \cite{hausel-thaddeus:2003}. A explanation of this phenomenon in terms of the perverse filtration associated to the Hitchin map was conjectured and proved for $n=2$ in \cite{deCat-Hau-Migl}. Motivated by mirror symmetry considerations \cite{hausel:2005}, Hausel and Rodriguez-Villegas \cite{hausel-rvillegas:2008} computed the E-polynomials of $\mathcal{M}_{B}$ for $G=GL(n,\CC)$, a specialization of the mixed Hodge polynomial that is built from the mixed Hodge numbers of the variety. Their method was arithmetic and produced generating functions for the E-polynomials based on the count of points of $\mathcal{M}_{B}$ over finite fields, due to a theorem of Katz. Following similar ideas, Mereb dealt with the $SL(n,\CC)$ case in \cite{mereb:2010}, and the case of a punctured Riemann surfaces with semisimple conjugacy classes at the punctures for $G=GL(n,\CC)$ was treated in \cite{hausel-letellier-rvillegas:2011}. Recently, Baraglia and Hekmati have provided explicit formulas for $G=\sldos,GL(2,\CC)$ and $SL(3,\CC),GL(3,\CC)$ for several finitely generated groups using also arithmetic methods \cite{baraglia-hekmati:2016}, and the case of wild character varieties has also been studied by Hausel, Mereb and Wong \cite{hausel-mereb-wong:2016}.

A different approach to the problem was started in \cite{lomune} for $G=\sldos$ and the low genus cases $g=1,2$, motivated by explicit descriptions \cite{mun1} of other character varieties. The technique was geometric and it was based on the study of the behaviour of the E-polynomials for fibrations which are locally trivial in the analytic topology, but not in the Zariski topology. Explicit stratifications of the representation spaces using trace maps were also used. Their main results were explicit formulas for the E-polynomials of character varieties of punctured Riemann surfaces of genus $1,2$ where the monodromy around the puncture is arbitrary, not necessarily semisimple.  The techniques were used in \cite{lomu} for two punctures and in \cite{CavLaw:2014}\cite{law-mun:2016} for free groups, and they were later developed in \cite{mamu} to deal with fibrations over bases of complex dimension two, where the case $g=3$ was solved. Finally, in \cite{mamu2} the arbitrary genus polynomials were obtained inductively and formulas generalizing the results of \cite{lomune},\cite{mamu} were given.
The behaviour of the E-polynomial of the parabolic character variety,
$$
\mathcal{M}_{\lambda}(\sldos)=\lbrace (A_{1},B_{1},\ldots, A_{g},B_{g})\in \sldos^{2g} \mid \prod_{i=1}^{g}[A_{i},B_{i}]=\begin{pmatrix} \lambda & 0 \\ 0 & \lambda^{-1}\end{pmatrix} \rbrace//\sldos
$$
when $\lambda$ varies in $\CC-\lbrace 0, \pm 1 \rbrace$ was crucial, and it was encoded in a polynomial $R(\mathcal{M}_{\lambda})$ with coefficients in the representation ring $R(\ZZ_2\times \ZZ_2)$, the \emph{Hodge monodromy representation}.

In this paper, the same approach is used to determine the E-polynomials of $\pgl$-character varieties of punctured Riemann surfaces with arbitrary monodromy at the puncture. That is, for any $C\in \sldos$, the character varieties that we will consider are the spaces
$$
\mathcal{M}_{C}(\pgl)=\lbrace (A_{1},B_{1},\ldots, A_{g},B_{g})\in \pgl^{2g} \mid \prod_{i=1}^{g}[A_{i},B_{i}]=C \rbrace // Stab(C)
$$
which are $\ZZ_2^{2g}$-quotients of the corresponding $\sldos$-character varieties, given by the action by multiplication of the square roots of unity on each matrix element, i.e. $\mathcal{M}_{C}(\pgl)=\mathcal{M}_{C}(\sldos)/\ZZ_2^{2g}$. The main results are the following:
\begin{thm} \label{thm:epolysmodulis} For all $g\geq 1$,
\begin{align*}
e(\mathcal{M}_{\Id}^{g}) & =  (q^{3}-q)^{2g-2}+(q^{2}-1)^{2g-2}-q(q^{2}-q)^{2g-2} -q^{2g-2},  \\
& +\frac{1}{2}q^{2g-1}((q-1)^{2g-2}+(q+1)^{2g-2})+\frac{1}{2}q((q+1)^{2g-1}+(q-1)^{2g-1}), \\
e(\mathcal{M}_{-\Id}^{g}) & = (q^{3}-q)^{2g-2}+(q^{2}-1)^{2g-2}-\frac{1}{2}((q^{2}+q)^{2g-2}+(q^{2}-q)^{2g-2}), \\
e(\mathcal{M}_{J_{+}}^{g}) & = (q^{2}-1)(q^{3}-q)^{2g-2}+\frac{1}{2}(-q(q+1)(q^{2}+q)^{2g-2}+(q-2)(q-1)(q^{2}-q)^{2g-2}), \\
e(\mathcal{M}_{J_{-}}^{g}) & =  (q^{2}-1)(q^{3}-q)^{2g-2}+\frac{1}{2}((q+1)(q^{2}+q)^{2g-2}-(q-1)(q^{2}-q)^{2g-2}), \\
e(\mathcal{M}_{\lambda}^{g}) & = (q^{2}+q)(q^{3}-q)^{2g-2}+(q+1)(q^{2}-1)^{2g-2} -q(q^{2}-q)^{2g-2}.
\end{align*}
for $J_+=\begin{pmatrix} 1& 1\\ 0 &1\end{pmatrix}$,
$J_-=\begin{pmatrix} - 1& 1\\ 0 &- 1\end{pmatrix}$ and
$\xi_\lambda=\begin{pmatrix} \lambda & 0\\ 0 &\lambda^{-1}\end{pmatrix}$, $\lambda\neq 0,\pm 1$,
and with $q=\, uv$. 
\end{thm}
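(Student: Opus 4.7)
The key observation is that $\mathcal{M}_C(\pgl) = \mathcal{M}_C(\sldos)/\ZZ_2^{2g}$, where $\ZZ_2^{2g}$ acts on tuples $(A_1,B_1,\ldots,A_g,B_g)$ via $(A_i,B_i)\mapsto (\epsilon_i A_i,\delta_i B_i)$ with $\epsilon_i,\delta_i=\pm 1$; this action preserves every commutator and therefore descends to the character variety. Since the $\sldos$-polynomials $e(\mathcal{M}_C(\sldos))$ are known from \cite{mamu2}, the task reduces to extracting the $\ZZ_2^{2g}$-invariant piece of the (equivariant) cohomology of $\mathcal{M}_C(\sldos)$, which in our framework means isolating the trivial character component of the equivariant E-polynomial.

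The natural strategy is to reuse the stratifications of $\mathcal{M}_C(\sldos)$ from \cite{lomune,mamu,mamu2}: one splits the space into an irreducible locus and a reducible locus, with the latter further stratified by trace data. Each stratum is $\ZZ_2^{2g}$-invariant, so the invariant E-polynomial can be computed stratum by stratum. On the irreducible locus the main tool is the Hodge monodromy representation $R(\mathcal{M}_\lambda)$ introduced in \cite{mamu2}, whose coefficients already decompose over the characters of $\ZZ_2 \times \ZZ_2$ arising from the monodromy of the family $\mathcal{M}_\lambda(\sldos)$ over $\lambda \in \CC - \{0,\pm 1\}$; reading off precisely the characters compatible with the $\ZZ_2^{2g}$-action yields the irreducible contribution to the $\pgl$-polynomial. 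On the reducible loci the representations factor through a maximal torus or a Borel subgroup, so everything reduces to E-polynomials of explicit quotients of (bundles over) tori by finite group actions, which can be computed directly.

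The cases split naturally into the semisimple regular case $C=\xi_\lambda$, which is handled cleanly by the Hodge monodromy representation, and the degenerate cases $C=\pm\Id$ and $C=J_\pm$, which will be the main obstacle. In the latter $\Stab(C)$ is bigger than the center, the reducible strata are larger, and the $\ZZ_2^{2g}$-fixed loci interact nontrivially with the reducibility stratification: representations with extra isotropy contribute to several strata simultaneously and the orbit sizes of the $\ZZ_2^{2g}$-action vary piece by piece, so the contributions must be separated carefully before summation. Once this bookkeeping is done, each of the five formulas emerges by adding the individual stratum contributions; a useful sanity check is that the leading term $(q^3-q)^{2g-2}$ is exactly what one expects from the irreducible stratum, whose $\ZZ_2^{2g}$-quotient accounts for the generic behaviour of $\mathcal{M}_C(\pgl)$.
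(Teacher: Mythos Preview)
Your starting observation is correct: $\mathcal{M}_C(\pgl)=\mathcal{M}_C(\sldos)/\ZZ_2^{2g}$, and in principle the problem is to extract the $\ZZ_2^{2g}$-invariant part of the cohomology. But there is a real gap in how you propose to do this. Knowing $e(\mathcal{M}_C(\sldos))$ alone does not let you ``extract the invariant piece'': you need the full $\ZZ_2^{2g}$-equivariant E-polynomial, which \cite{mamu2} does not compute. More seriously, your plan to read off the $\pgl$ answer from the Hodge monodromy representation $R(\mathcal{M}_\lambda)$ conflates two unrelated $\ZZ_2\times\ZZ_2$'s. The coefficients $a^g,b^g,c^g,d^g$ in $R(\oX_4/\ZZ_2)$ record how the cohomology of the \emph{fibre} varies as $\lambda$ moves around the punctures of $\CC-\{\pm 2\}$; this monodromy action has nothing a priori to do with the action of the center $\ZZ_2^{2g}$ on a fixed fibre. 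You cannot pick out ``the characters compatible with the $\ZZ_2^{2g}$-action'' from $R(\oX_4/\ZZ_2)$ without redoing the computation equivariantly.

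The paper takes a different and more structured route. It first computes, by explicit stratification, the genus $1$ building blocks $\oZ_i=\oX_i/(\ZZ_2\times\ZZ_2)$ and the Hodge monodromy $R(\oZ_4/\ZZ_2)$; this is where the $\ZZ_2^{2g}$-invariants are actually extracted, but only for $g=1$. It then sets up the same handle-attachment induction as in \cite{mamu2}: writing $v^g=(\tilde e_0^g,\tilde e_1^g,\tilde e_2^g,\tilde e_3^g,\tilde a^g,\tilde b^g)$, one gets a $6\times 6$ matrix $M\in M_6(\ZZ[q])$ with $v^g=Mv^{g-1}$, and diagonalising $M$ gives closed formulas for the $\tilde e_i^g$. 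The moduli polynomials for $C=-\Id,J_\pm,\xi_\lambda$ then follow by dividing by the stabiliser, while $C=\Id$ requires a separate computation of the reducible locus (this last part your sketch does anticipate). The point is that the $\ZZ_2^{2g}$-quotient is never taken for general $g$ directly; it is taken once for $g=1$, and the induction, being built from $\pgl$-pieces, propagates it automatically. If you tried to carry out your stratum-by-stratum invariant computation for arbitrary $g$, you would essentially be forced to rediscover this induction.
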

A variety is said of \emph{balanced type} (also called Hodge-Tate type in the literature) when $H_{c}^{k,p,q}=0$ for $p\neq q$. As a consequence of the tools that are used, it is also obtained
\begin{thm}
The character varieties $\mathcal{M}_{C}(\pgl)$ are of balanced type, for any conjugacy class $C$.
\end{thm}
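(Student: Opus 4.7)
My plan is to reduce the statement to the corresponding balancedness of the $\sldos$-character varieties, exploiting the finite-quotient description $\mathcal{M}_C(\pgl)=\mathcal{M}_C(\sldos)/\ZZ_2^{2g}$ recalled in the Introduction. Because this quotient is by a finite group acting algebraically on a complex variety, in characteristic zero one has
$$H^k_c(\mathcal{M}_C(\pgl)) \;\cong\; H^k_c(\mathcal{M}_C(\sldos))^{\ZZ_2^{2g}}$$
as mixed Hodge structures. The balanced (Hodge--Tate) condition $H^{k,p,q}_c=0$ for $p\neq q$ is inherited by any sub--mixed-Hodge-structure, in particular by $F$-invariants; consequently the $\pgl$-claim follows from its $\sldos$-analogue.

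Next I would verify that each $\mathcal{M}_C(\sldos)$ is of balanced type. The cleanest route is to invoke the inductive computation of their E-polynomials in \cite{mamu2}: there the representation spaces are cut into strata that are essentially products of copies of $\CC$, $\CC^*$, and other standard affine/toric pieces, and these are glued together by fibrations whose Hodge monodromy representations $R(\mathcal{M}_\lambda)\in R(\ZZ_2\x\ZZ_2)[q]$ lie in the subring generated by the Tate classes $q^k=(uv)^k$. Since balancedness is preserved by products, finite (un)coverings, Zariski-locally trivial fibrations with balanced fibers, and by passing from an analytically (but not algebraically) locally trivial fibration to its base once the monodromy is Hodge--Tate, every stratum and every gluing contribute only Tate classes, yielding balancedness of $\mathcal{M}_C(\sldos)$ and therefore, by the first step, of $\mathcal{M}_C(\pgl)$.

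The main obstacle, if one wished to give an intrinsic $\pgl$-argument bypassing \cite{mamu2}, would be to show that the analogous Hodge monodromy representations arising in a $\pgl$-stratification are themselves Tate. But this is automatic from the $\sldos$-picture: the $\ZZ_2^{2g}$-action is by characters, so it preserves the Tate grading, and taking $\ZZ_2^{2g}$-invariants of a Tate element of $R(\ZZ_2\x\ZZ_2)[q]$ yields another Tate polynomial. Hence no genuinely new Hodge-theoretic input is required beyond what the E-polynomial computation of Theorem~\ref{thm:epolysmodulis} already supplies: the very fact that those E-polynomials are polynomials in $q=uv$ with integer coefficients exhibits balancedness directly, because each term corresponds to a class of Hodge type $(k,k)$.
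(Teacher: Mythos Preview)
Your main argument---the reduction via the finite quotient $\mathcal{M}_C(\pgl)=\mathcal{M}_C(\sldos)/\ZZ_2^{2g}$, the identification $H^*_c(X/G)\cong H^*_c(X)^G$ as mixed Hodge structures for a finite group $G$ acting algebraically, and the balancedness of the $\sldos$-character varieties established in \cite{mamu2}---is correct and constitutes a valid proof.

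This is, however, a genuinely different route from the paper's. The paper does not reduce to the $\sldos$ case at all: it stratifies the $\pgl$ representation spaces $\oZ_i^g$ directly (Sections~\ref{sec:basicblocks} and~\ref{chapter:highgenus}) and applies Theorem~\ref{thm:general-fibr} to each fibration arising in that stratification, so that balancedness is propagated step by step through the strata as a by-product of the E-polynomial computation itself. Your approach is shorter, since it simply leverages the work already done in \cite{mamu2}; the paper's approach is self-contained for $\pgl$ and produces the explicit E-polynomials of the $\oZ_i^g$ along the way, which are in any case its main objective.

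One genuine error to flag: your final sentence is wrong. The fact that the E-polynomials of Theorem~\ref{thm:epolysmodulis} lie in $\ZZ[q]$ does \emph{not} by itself exhibit balancedness. The E-polynomial is an alternating sum over cohomological degree, so cancellations can mask off-diagonal Hodge numbers $h^{k,p,q}_c$ with $p\neq q$; a variety can perfectly well have $e(Z)\in\ZZ[q]$ without being of balanced type. Balancedness here comes from the \emph{method} (stratifications together with Theorem~\ref{thm:general-fibr}, whose hypothesis is that $B_\rho$ is balanced and whose conclusion is that $Z$ is balanced), not from the shape of the final answer. Your first two paragraphs already contain a complete and correct argument; the last sentence should simply be dropped.
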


The E-polynomial information of the character varieties for punctured curves of genus $g$ and arbitrary monodromy at the punctures is encoded in 6 polynomials, $v^{g}=(\tilde{e}_{0}^{g},\tilde{e}_{1}^{g},\tilde{e}_{2}^{g},\tilde{e}_{3}^{g}, \tilde{a}^{g},\tilde{b}^{g})$, that correspond to the representation spaces for $C=\Id,-\Id,J_{+},J_{-}$ and the monodromy information of the parabolic family $\mathcal{M}_{\lambda}^{g}$. The genus $1$ case provides what we call basic blocks, which are computed explicitly in Section \ref{sec:basicblocks}. In Section \ref{chapter:highgenus}, they are used as basis for an induction procedure that provides the formulas of Theorem \ref{thm:epolysmodulis}. The induction step is given by a $6 \times 6$ matrix $M$ with coefficients in $\ZZ[q]$ that represents the handle attachment $X^{g}=X^{g-1} \# X^{1}$ at the level of the E-polynomials of the representation spaces, since $v^{g}=Mv^{g-1}$.

Finally, if we look at the E-polynomials of the representation spaces for $\sldos$ and $\pgl$, we uncover the following pattern. The parabolic representation varieties
$$
\oX_{4,\lambda}=\lbrace (A_{1},B_{1},\ldots, A_{g},B_{g})\in \sldos^{2g} \mid \prod_{i=1}^{g}[A_{i},B_{i}]=\begin{pmatrix} \lambda & 0 \\ 0 & \lambda^{-1}\end{pmatrix} \rbrace
$$
can be seen as the fibres of a total space $\oX_{4}$ that fibres over $\CC- \lbrace 0 \pm 1 \rbrace$. There is a $\ZZ_2$-action on $\oX_{4}$ compatible with the fibration that takes $\lambda$ to $\lambda^{-1}$ on the base and conjugates the representation by interchanging the eigenvectors, giving rise to a fibration $\oX_{4}/\ZZ_2 \rightarrow \CC- \lbrace \pm 2 \rbrace$. As we have already mentioned, the behaviour of the E-polynomial of the monodromy of this fibration is controlled by a polynomial with coefficients in the representation ring $R(\ZZ_2 \times \ZZ_2)$,
\begin{equation} \label{eqn:Hodgemonintrox4z2}
R(\oX_{4}/\ZZ_2)=a^{g}T+b^{g}S_{2}+c^{g}S_{-2}+d^{g}S_{0}\in R(\ZZ_2\times \ZZ_2)[q]
\end{equation}
where $a^{g},b^{g},c^{g},d^{g}\in \ZZ[q]$ and $T,S_{0},S_{-2},S_{2}$ are elements in $R(\ZZ_2\times \ZZ_2)$, as it was shown and explained in \cite{mamu2}. Here, its $\pgl$-version is computed in Theorem \ref{thm:Polynomialseig}. With the information from Theorem \ref{thm:epolysmodulis}, analyzing the E-polynomials for the Langlands dual groups $G=\sldos$ and $\hat{G}=\pgl$, we observe
\begin{thm}
The difference between the E-polynomials of the $G,\hat{G}$-character varieties associated to punctured Riemann surfaces can be expressed in terms of the coefficients of the Hodge monodromy representation of the fibration $\oX_{4}/\ZZ_2 \rightarrow \CC-\lbrace \pm 2 \rbrace$,
$$
R(\oX_{4}/\ZZ_2)=a^{g}T+b^{g}S_{2}+c^{g}S_{-2}+d^{g}S_{0}
$$
These differences are the following 
\begin{align}
e_{0}-\tilde{e}_{0} & = (q^3-q)(2^{2g}-1)\left(\frac{(q^{2}-q)^{2g-2}+(q^{2}+q)^{2g-2}}{2}\right)=qc^{g}+b^{g}, \nonumber\\
e_{1}-\tilde{e}_{1} & = (q^3-q)(2^{2g}-1)\left(\frac{(q^{2}-q)^{2g-2}-(q^{2}+q)^{2g-2}}{2}\right)=qb^{g}+c^{g}, \\
e_{2}-\tilde{e}_{2} & = (2^{2g}-1)\left(\frac{(q^{2}-q)^{2g-1}-(q^{2}+q)^{2g-1}}{2}\right)=b^{g}, \nonumber\\
e_{3}-\tilde{e}_{3} & = (2^{2g}-1)\left(\frac{(q^{2}-q)^{2g-1}+(q^{2}+q)^{2g-1}}{2}\right)=c^{g}, \nonumber \\
e_{4,\lambda}-\tilde{e}_{4,\lambda} & = (2^{2g}-1)(q^{2}-q)^{2g-1} =b^{g}+c^{g}. \nonumber
\end{align}
where $e_{i},\tilde{e}_{i}, i=1\ldots 4$ are the E-polynomials of the $\sldos$, $\pgl$-representation spaces for $C=\Id, -\Id, J_{+},J_{-}, \xi_{\lambda}$ respectively.
\end{thm}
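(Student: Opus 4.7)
The plan is to exploit the quotient relation $\mathcal{M}_C(\pgl) = \mathcal{M}_C(\sldos)/\Gamma$ for $\Gamma = \ZZ_2^{2g}$ acting by $(\epsilon_i,\eta_i)\cdot(A_i,B_i) = (\epsilon_i A_i, \eta_i B_i)$, which preserves every commutator $[A_i,B_i]$ because $\pm 1$ is central in $\sldos$. Since $\Gamma$ is finite abelian and acts algebraically preserving the mixed Hodge structure, the compactly supported cohomology splits into isotypic pieces and $e(\mathcal{M}_C(\pgl))$ coincides with the trivial-character summand. Consequently,
\[
e_C - \tilde{e}_C \,=\, \sum_{\chi \neq 1} e_\chi(\mathcal{M}_C(\sldos)),
\]
and the theorem amounts to matching this sum with the stated expressions in $a^g, b^g, c^g, d^g$. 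The common prefactor $(2^{2g}-1) = |\Gamma|-1$ visible in every right-hand side is consistent with a uniform contribution from each non-trivial $\chi$.

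On the conceptual side, the four irreducibles $T, S_0, S_2, S_{-2}$ of $R(\ZZ_2 \times \ZZ_2)$ enter $R(\oX_4/\ZZ_2)$ through the monodromy $\ZZ_2 \times \ZZ_2$ of the family $\oX_4/\ZZ_2 \to \CC - \{\pm 2\}$ acting on fibre cohomology. The $\Gamma$-characters should restrict to this $\ZZ_2 \times \ZZ_2$ in such a way that the trivial character together with one distinguished sign character lie inside the $\Gamma$-invariant part, accounting for the absence of $a^g$ and $d^g$ from the formulas; the remaining two irreducibles then couple to the non-trivial isotypic summands with coefficients $b^g$ and $c^g$.

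In practice, the most direct execution is verification. Using the $\pgl$ formulas of Theorem \ref{thm:epolysmodulis}, the corresponding $\sldos$ formulas from \cite{mamu2}, and the explicit polynomials $a^g, b^g, c^g, d^g$ produced by Theorem \ref{thm:Polynomialseig}, each of the five identities reduces to a polynomial equality in $q$. I would subtract $\tilde{e}_i$ from $e_i$, expand the binomial terms $(q\pm 1)^{2g-2}$ and $(q^2\pm q)^{2g-2}$, and match against the right-hand sides; a convenient internal consistency check is $b^g + c^g = (e_2-\tilde{e}_2) + (e_3-\tilde{e}_3) = e_{4,\lambda}-\tilde{e}_{4,\lambda}$, which reduces to $(2^{2g}-1)(q^2-q)^{2g-1}$ on both sides.

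The main obstacle is conceptual rather than computational: producing a direct geometric identification of each non-trivial $\chi$-isotypic component of $H^*_c(\mathcal{M}_C(\sldos))$ with the corresponding summand of $R(\oX_4/\ZZ_2)$, bypassing the polynomial comparison. Such an argument would require a $\Gamma$-equivariant refinement of the trace-map stratification used throughout the paper, together with a careful analysis near the degenerate fibres $\lambda = \pm 1$ of the parabolic family where the stabilisers jump. Without such a structural result the theorem is best presented as a polynomial observation, striking precisely because it makes the Langlands-dual symmetry between $\sldos$ and $\pgl$ visible at the level of E-polynomials through a single universal object, the Hodge monodromy representation.
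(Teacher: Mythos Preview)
Your approach---direct polynomial verification by subtracting the explicit closed formulas for $\tilde e_i^g$ from those for $e_i^g$ and matching against the stated combinations of $b^g,c^g$---is exactly what the paper does; the theorem is presented there as an observation following immediately from the two sets of formulas, with no further argument.

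Two small corrections to your references. First, the quantities $e_i,\tilde e_i$ in the statement are the E-polynomials of the \emph{representation spaces} $\oX_i^g,\oZ_i^g$, not of the moduli spaces $\mathcal{M}_C$; the relevant $\pgl$ formulas are therefore those of Theorem~\ref{thm:Polynomialseig} (equivalently Theorem~\ref{thm:epolysofei}), not Theorem~\ref{thm:epolysmodulis}. Second, the coefficients $a^g,b^g,c^g,d^g$ of $R(\oX_4/\ZZ_2)$ are not produced by Theorem~\ref{thm:Polynomialseig} of this paper (which computes only the $\pgl$ monodromy $\tilde a^g,\tilde b^g$, a representation of $\ZZ_2$ rather than $\ZZ_2\times\ZZ_2$); they are quoted from \cite[Theorem~7]{mamu2} in the paragraph immediately preceding the theorem. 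With these substitutions your verification goes through verbatim. Your isotypic-decomposition framing and the remark that the factor $2^{2g}-1$ counts the nontrivial characters of $\Gamma$ are a nice gloss that the paper does not make explicit.
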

We end up with Conjecture \ref{conj:Hodgemonmirrorsymmetry}, which asks whether this phenomenon is still true for $SL(n,\CC)$ and $PGL(n,\CC)$, $n\geq 3$. In this case, the fibration $\oX_{4}$ is replaced by a fibration $\oX_{\boldsymbol{\lambda}}$ of representation spaces of punctured Riemann surfaces whose monodromy at the puncture is fixed and semisimple, given by a matrix $C$ determined by eigenvalues $\boldsymbol{\lambda}=(\lambda_{1},\ldots,\lambda_{n})$. It is natural to expect that the mirror symmetric differences are given in terms of the Hodge monodromy representation of the quotient fibration $R(\oX_{\boldsymbol{\lambda}}/S_{n})$ and that the expressions obey a symmetric pattern in similar way to what occurs for the $n=2$ case.

\noindent\textbf{Acknowledgements.} 
I would like to thank my phD thesis advisor Vicente Mu\~noz for his advice and for many enlightening discussions related with this work. I would like to also thank Marina Logares and Peter Newstead for useful conversations related to this paper.
%%%%Results and statements
%%%%%%%%%%%%%%%%%%%%%%%%%%%%%%%%%%%%%%%%%%%%%%%%%%%%%%%%%%%%%%%%%%%%%%%%%%%%%%%%%%%%%%%
\section{E-polynomials}\label{sec:E-polynomials}
%%%%%%%%%%%%%%%%%%%%%%%%%%%%%%%%%%%%%%%%%%%%%%%%%%%%%%%%%%%%%%%%%%%%%%%%%%%%%%%%%%%%%%%

A pure Hodge structure of weight $k$ consists of a finite dimensional complex vector space
$H$ with a real structure and a decomposition $H=\bigoplus_{k=p+q} H^{p,q}$
such that $H^{q,p}=\overline{H^{p,q}}$, where the bar means complex conjugation on $H$.
An equivalent definition can be obtained by replacing the direct sum decomposition by the Hodge filtration, which is a descending filtration by complex subspaces $F^{p}$ subject to the condition  $F^{p}\cap \overline{F^{k-(p-1)}}=0$ for all $p$. It this case, we can define $Gr^{p}_{F}(H):=F^{p}/F^{p+1}=H^{p,k-p}$. A classical result is the Hodge decomposition, that asserts that the cohomology groups $H^{k}(X,\CC)$ of a complex K\"ahler manifold have a pure Hodge structure of weight $k$.

A more general concept is given by the notion of a mixed Hodge structure. It consists of a finite dimensional complex vector space $H$ with a real structure and two filtrations:
an ascending filtration $W_{\bullet}$ $0 \subset \ldots \subset W_{k-1}\subset W_k \subset \ldots \subset H$
(defined over $\RR$) called the weight filtration, and another descending filtration $F^{\bullet}$, called the Hodge filtration, such that $F$ induces a pure Hodge structure
of weight $k$ on each graded piece $\Gr^{W}_{k}(H)=W_{k}/W_{k-1}$. We define
 $$
 H^{p,q}:= \Gr^{p}_{F}\Gr^{W}_{p+q}(H),
 $$
and define the Hodge numbers $h^{p,q} :=\dim_{\CC} H^{p,q}$.

Let $Z$ be any quasi-projective algebraic variety, possibly non-smooth or non-compact. 
The cohomology groups $H^k(Z)$ and the cohomology groups with compact support  
$H^k_c(Z)$ are endowed with mixed Hodge structures \cite{Deligne2,Deligne3} for each $k$. 
We can define the {\em Hodge numbers} of $Z$ by
$h^{k,p,q}_{c}(Z) = h^{p,q}(H_{c}^k(Z))$.
The Hodge-Deligne polynomial, or \emph{E-polynomial}, is defined as the alternate sum
 $$
 e(Z)=e(Z)(u,v):=\sum _{p,q,k} (-1)^{k}h^{k,p,q}_{c}(Z) u^{p}v^{q} \in \ZZ[u,v],
 $$
a specialization at $t=1$ of the mixed Hodge polynomial $h(Z)=\sum h^{k,p,q}_{c}(Z)u^{p}v^{q}t^{k}$ that encodes all mixed Hodge numbers. A relevant case that will appear in our setup is when $h_c^{k,p,q}=0$ for $p\neq q$. In that case, the polynomial $e(Z)$ depends only on the product $q:=uv$ and the variety is said to be {\it of balanced type}, or of {\it Hodge-Tate type}. An example is $e(\CC^n)=q^n$.

\subsubsection*{$\ZZ_2$-actions} When there is an action of $\ZZ_2$ on $Z$, we have
polynomials $e(Z)^+, e(Z)^-$, the E-polynomials of the invariant and the anti-invariant parts of the cohomology of $Z$ respectively. That is
$$
e(Z)^+=e(Z/\ZZ_2), \quad e(Z)^-=e(Z)-e(Z)^+.
$$
We will repeatedly use the following: if we have a fibration
$F\to Z\to B$, locally trivial in the analytic topology, with trivial monodromy and an action of $\ZZ_2$ 
on the fibration, we have an equality (see \cite[Proposition 2.6]{lomune}) 
 \begin{equation}\label{eqn:+-}
 e(Z/\ZZ_2) =e(F)^+e(B)^+ +e(F)^-e(B)^-\, .
  \end{equation}
  
\subsubsection*{Fibrations}
Assume that
 \begin{equation}\label{fibration}
  F \longrightarrow Z \overset{\pi}{\longrightarrow} B 
 \end{equation}
is a fibration locally trivial in the analytic topology, and let $F$ be of balanced type.
The fibration defines a local system $\mathcal{H}^{k}_{c}$, of fibre $H^{k}_{c}(F_{b})$, where $b\in B$, $F_{b}=\pi^{-1}(b)$. There is a monodromy representation $\rho$ associated to the local system, given by
 \begin{equation}\label{eqn:exxtra}
 \rho : \pi_{1}(B) \longrightarrow  \GL(H^{k,p,p}_c(F)).
 \end{equation}
Suppose that the  monodromy group $\Gamma=\im(\rho)$ is a finite group. 
Then the vector spaces $H_c^{k,p,p}(F)$ are modules over the representation ring $R(\Gamma)$, and we can consider the following well-defined element, that we call the  {\em Hodge monodromy representation},
  \begin{equation}\label{eqn:Hodge-mon-rep}
  R(Z) := \sum (-1)^k H_c^{k,p,p}(F)\,  q^p \in R(\Gamma)[q] \, .
  \end{equation}
As the monodromy representation (\ref{eqn:exxtra}) has finite image, there is a finite covering
$B_\rho \to B$ such that the pull-back fibration has trivial monodromy. We have the
following result.

\begin{thm}[{\cite[Theorem 2]{mamu}}] \label{thm:general-fibr}
 Suppose that $B_\rho$ is of balanced type. Then $Z$ is of balanced type. 
There is a $\ZZ[q]$-linear map
 $$
 e: R(\Gamma)[q] \to \ZZ[q]
 $$
satisfying the property that $e(R(Z))=e(Z)$.
\end{thm}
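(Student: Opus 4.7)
The plan is to reduce the theorem to a fibration with trivial monodromy by pulling back along the Galois cover $B_\rho \to B$ (with deck group $\Gamma$) and then to extract $e(Z)$ from the $\Gamma$-invariant cohomology.

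First I would form the pullback $\widetilde{Z} := Z\times_B B_\rho$, so that $F \to \widetilde{Z} \to B_\rho$ has trivial monodromy by construction of $B_\rho$. Because $F$ is of balanced type and the local systems $R^q\pi_!\CC$ trivialize on $B_\rho$, the Leray spectral sequence degenerates at $E_2$ and produces a Künneth-type isomorphism $H^*_c(\widetilde{Z}) \cong H^*_c(F)\otimes H^*_c(B_\rho)$ of mixed Hodge structures. Since both factors are balanced ($B_\rho$ by hypothesis, $F$ by assumption), $\widetilde{Z}$ is balanced and $e(\widetilde{Z}) = e(F)\cdot e(B_\rho)$. The group $\Gamma$ acts on $\widetilde{Z}$ lifting its deck action on $B_\rho$, with geometric quotient $Z$, so by the functoriality of MHS under finite quotients, $H^*_c(Z) \cong H^*_c(\widetilde{Z})^\Gamma$ as mixed Hodge structures; hence $Z$ is balanced as a sub-MHS of a balanced one.

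For the map $e$, I would use that the Künneth decomposition above is $\Gamma$-equivariant: $\Gamma$ acts on $H^*_c(F)$ through the monodromy representation (whose $(p,p)$-graded pieces assemble precisely into $R(Z)\in R(\Gamma)[q]$) and on $H^*_c(B_\rho)$ by deck transformations. For each irreducible $V \in \widehat{\Gamma}$ I set $e(V) := \sum_{j,s}(-1)^j \dim [V \otimes H^{j,s,s}_c(B_\rho)]^{\Gamma}\, q^s \in \ZZ[q]$ and extend $\ZZ[q]$-linearly to a map $R(\Gamma)[q]\to\ZZ[q]$. Taking $\Gamma$-invariants of the tensor product and applying Schur's lemma isolates, for each irreducible summand $V$ appearing in $H^{k,p,p}_c(F)$, exactly the $V^*$-isotypic component of $H^{j,s,s}_c(B_\rho)$, and matching bidegrees yields $e(Z) = e(R(Z))$.

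The hard part will be verifying the $\Gamma$-equivariance and naturality of the Künneth/Leray isomorphism at the level of \emph{mixed} Hodge structures, as well as the identification of $H^*_c(Z)$ with $H^*_c(\widetilde{Z})^\Gamma$ as MHS. Both rest on the degeneration of the Leray spectral sequence in the balanced setting (where all fiber classes sit in pure $(p,p)$-pieces and so cannot mix with the weight filtration on the base) and on the classical Deligne-style functoriality of MHS under finite group actions on quasi-projective varieties; once these are in place the remainder is character-theoretic bookkeeping.
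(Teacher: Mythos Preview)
The paper does not prove this theorem; it is quoted verbatim from \cite[Theorem~2]{mamu}, so there is no in-paper proof to compare against. Your overall strategy---pull back along the Galois cover $B_\rho\to B$, trivialise the monodromy, and recover $H^*_c(Z)$ as the $\Gamma$-invariants of $H^*_c(\widetilde Z)$---is the natural one and is essentially how the cited reference proceeds.

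There is, however, a genuine gap. You assert that the Leray spectral sequence for $\widetilde Z\to B_\rho$ \emph{degenerates} at $E_2$ and yields a K\"unneth isomorphism $H^*_c(\widetilde Z)\cong H^*_c(F)\otimes H^*_c(B_\rho)$ of mixed Hodge structures, and you justify this by saying that ``all fiber classes sit in pure $(p,p)$-pieces and so cannot mix with the weight filtration on the base''. This weight argument does not force degeneration: a balanced MHS still carries pieces of many different weights (the $(p,p)$-summand has weight $2p$), and the differentials $d_r$, being morphisms of MHS, are only required to preserve the weight filtration---they are not forced to vanish between balanced objects. In general a locally trivial fibration with trivial monodromy need not satisfy K\"unneth at the level of cohomology.

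The good news is that you do not need degeneration. The E-polynomial is an alternating sum, and in any spectral sequence of mixed Hodge structures the class $\sum_{p,q}(-1)^{p+q}[E_r^{p,q}]$ in $K_0(\mathrm{MHS})$ is independent of $r$; so one may compute $e(\widetilde Z)$ (and, after passing to $\Gamma$-invariants, $e(Z)$) directly from the $E_2$-page $H^p_c(B_\rho)\otimes H^q_c(F)$ without ever asserting that higher differentials vanish. The balanced-type conclusion for $Z$ likewise only needs that $E_2$ is balanced (tensor of balanced objects) and that balancedness is inherited by subquotients, hence by every later page and by the abutment. If you replace the K\"unneth/degeneration claim by this $K_0$-level argument, the rest of your outline---including your definition of $e(V)$ via $\Gamma$-isotypic pieces of $H^*_c(B_\rho)$---goes through.
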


Furthermore, suppose that 
the  monodromy group $\Gamma=\im(\rho)$ is a finite abelian group. 
Let $S_1,\ldots, S_N$ be the
irreducible representations of $\Gamma$, which are generators of $R(\Gamma)$ as a free abelian group. 
Write $s_i(q)=e(S_i)$, $1\leq i\leq N$. Rewriting the Hodge monodromy representation (\ref{eqn:Hodge-mon-rep}) as
  $$
 R(Z)= a_1(q) S_1 + \ldots a_N(q) S_N,
 $$
then by (\ref{thm:general-fibr}) 
 $$
 e(Z)= a_1(q) s_1(q) + \ldots +a_N(q) s_N(q) .
 $$
We will apply Theorem \ref{thm:general-fibr} in the following situations:
\begin{itemize}
\item When the the action of $\pi_1(B)$ on $H_c^*(F)$ is trivial.
Then $R(Z)=e(F) T$, where $T$ is the trivial local system and (\cite[Proposition 2.4]{lomune})
$$
e(Z)=e(F) e(B).
$$
In particular, this happens when the fibration is trivial in the Zariski topology, or when
$Z$ is a $G$-space with isotropy $H<G$ such that $G/H\to Z\to B$ is a fibre bundle, 
and $G$ and $H$ ar e connected algebraic groups. Then 
$$
e(Z)=e(B)e(G)/e(H)
$$

\item When $B$ is one-dimensional: 
 $$
 F\too Z \too B=\CC-\{q_1,\ldots, q_\ell\}, 
 $$
Assuming that $B_\rho$ is a rational curve, we write
$R(Z)=a_1(q) T+ a_2(q) S_2+ \ldots + a_N(q)S_N$, where $T$ is the trivial representation
and $S_2, \ldots, S_N$ are the non-trivial representations. Then $e(T)=q-\ell$ and $e(S_i)=-(\ell-1)$,
$2 \leq i\leq N$. Hence
 \begin{equation}\label{eqn:onedimesional}
 e(Z)=(q- \ell) a_1(q) -(\ell -1) \sum_{i=2}^N a_i(q)=(q-1)\, e(F)^{inv} - (\ell-1) e(F),
 \end{equation}
where $e(F)^{inv}=a_1(q)$ is the E-polynomial of
the invariant part of the cohomology of $F$ and $e(F)=\sum_{i=1}^N a_i(q)$. (See \cite[Corollary 3]{mamu}.
\end{itemize}
An application of (\ref{eqn:onedimesional}) and Theorem \ref{thm:general-fibr} relevant to our situation is the following. Assume that we have a fibration $\oZ \to B=\CC- \lbrace 0 \pm 1 \rbrace$, with a $\ZZ_2$-action compatible with the action $\lambda \mapsto \lambda^{-1}$ in the base. We thus have a fibration
$$
\oZ/\ZZ_2 \rightarrow B'=\CC - \lbrace \pm 2 \rbrace
$$
Assume that the Hodge monodromy representation is of the form $R(\oZ/\ZZ_2)=aT+bS_{2}+S_{-2}+S_{0} \in R(\ZZ_2\times \ZZ_2)[q]$, where $T$ is the trivial representation, $S_{\pm 2}$ are trivial over $\pm 2$ and non-trivial over $\mp 2$ and $S_{0}=S_{2}\otimes S_{-2}$. Then we have
\begin{align}
R(\oZ) & = (a+d)T+(b+c)N \in R(\ZZ_2][q] \nonumber\\
e(\oZ) & =(q-3)(a+d)-2(b+c) \nonumber \\
e(\oZ)^{+} & = (q-2)a-(b+c+d) \label{eqn:eoX4Z2} \\
e(\oZ)^{-} & = (q-2)d-(a+b+c) \nonumber
\end{align}
%%%%%%%%%%%%%%%%%%%%%%%%%%%%%%%%%%%%%%%%%%%%%%%%%%%%%%%%%%%%%%%%%%%%%%%%%%%%%%%%
\section{Basic blocks. Character varieties of curves of genus $g=1$} \label{sec:basicblocks}
Our first step is to compute the E-polynomials of what we call basic or building blocks. They correspond to the character varieties associated to complex curves of  genus $1$. They serve as a basis for the induction procedure performed in Chapter \ref{chapter:highgenus} to obtain the E-polynomials for arbitrary genus. For $\sldos$, these blocks were described in detail in \cite{lomune} and their E-polynomials were also given. For $g=1$, they are GIT quotients of the representation spaces
$$
X_{i}:=\lbrace (A,B)\in \sldos^{2} \mid [A,B]\sim C \rbrace
$$
where $C=\Id,-\Id,J_{+}, J_{-}, \xi_{\lambda}$ for $i=0,1,2,3,4$ respectively. We will also write $\oX_{i}= \lbrace (A,B)\in \sldos^{2} \mid [A,B]=C \rbrace$ for the fixed conjugacy class cases. The corresponding $\pgl$-representation space can be seen as the quotient by the torus action $(A,B)\sim(-A,B) \sim (A,-B)$, that is
$$
Z_{i}=\lbrace (A,B)\in \pgl^{2} \mid [A,B]=C \rbrace \sim X_{i}/(\ZZ_2\times \ZZ_2)
$$
where again $C=\Id,-\Id,J_{+},J_{-}, \xi_{\lambda}$ for $i=0,1,2,3,4,$ and also $\oZ_{i}=\oX_{i}/(\ZZ_2\times \ZZ_2)$.
They stratify $\pgl^{2}$ because if we take the $\pgl$-equivariant map
\begin{eqnarray*}
f : \pgl^{2} & \longrightarrow & SL(2,\mathbb{C}) \\
(A,B) & \longrightarrow & [A,B]
\end{eqnarray*}
then $Z_{i}=f^{-1}(C)$, for a conjugacy class $C$ so that $\pgl^{2}=\sqcup_{C\in \mathcal{C}} f^{-1}(C) = Z_{0} \sqcup \ldots \sqcup Z_{4}$. An analogous commutator map $\tilde{f}$ can be defined from $\sldos^{2}$ to $\sldos$, so that $\tilde{f}^{-1}(C)=X_{i}$. We proceed in what follows to describe and stratify each $Z_{i}$.

\subsection*{\boldmath{$Z_{0}=\lbrace (A,B)\in \pgl^{2} \mid [A,B]=\Id \rbrace$}}
First of all, we remove the cases when $A$ or $B$ are equal to $\Id$. In any of those cases, the other matrix can be an arbitrary matrix in $\pgl$, so
$$
Z_0=\lbrace \pgl \times [\Id]\rbrace \cup \lbrace [\Id]\times \pgl \rbrace \cup Z_0'
$$
where $Z_0'=\lbrace (A,B)\in Z_0 \mid [A],[B]\neq \Id \rbrace$. As a consequence, $e(Z_0)=2e(\pgl)-1+e(Z_0')$.
We can divide again $Z_0'$ into two subsets, $Z_0'=Z_0'^{a}\cup Z_0'^{b}$, where
$$
Z_0'^{a}=\lbrace (A,B)\in Z_0 : A,B\neq \Id : \tr(A)\neq \pm 2 \rbrace
$$
In this case, we know that $A$ is diagonalizable, and since $A$ and $B$ commute, both are diagonalizable with respect to the same basis and $(A,B) \sim \left( \begin{pmatrix} \alpha & 0 \\ 0 & \alpha^{-1} \end{pmatrix}, \begin{pmatrix} \beta & 0 \\ 0 & \beta^{-1} \end{pmatrix} \right)$, $\alpha,\beta  \neq 0,\pm 1$. The $\ZZ_{2}$-actions are $\alpha \mapsto -\alpha$, $\beta \mapsto -\beta$, so we may take $a:=\alpha^{2}, b:=\beta^{2}, a,b\neq 0,1$ as parameters for the $\pgl$-quotient.

If we write $\tilde{Y}_4= \lbrace (a,P) : a\in \CC - \lbrace 0, 1 \rbrace, P\in GL(2,\mathbb{C})/D \rbrace$ and $Y_4 = \lbrace A\in \pgl : \tr A \neq \pm 2 \rbrace$, we have the following diagram
$$
\xymatrix{\CC - \lbrace 0,1 \rbrace \ar[r]^{=} \ar[d] & \CC - \lbrace 0,1 \rbrace \ar[d] \\
\tilde{Y}_4\times \CC - \lbrace 0,1 \rbrace \ar[r]^{\phi} \ar[d]^{\pi_1} & Z_0^{'a} \ar[d]^{\pi_2} \\
\tilde{Y}_{4} \ar[r]^{\varphi} & Y_{4}}
$$
where $\pi_1$ is the projection onto the first factor and $\pi_2$ takes a pair $(A,B)\in Z_0^{'a}$ to $A\in Y_4$. The fibre is parametrized by the eigenvalue $b \in \CC - \lbrace 0,1 \rbrace$ associated to $B$. Besides, the horizontal map at the bottom takes a pair $(a,P) \in \tilde{Y}_{4}$ to $P\Big( \begin{smallmatrix} a & 0 \\ 0 & a^{-1} \end{smallmatrix} \Big)P^{-1}$, and the horizontal map in the middle takes an element $(a,P,b) \in \tilde{Y}_{4}\times \CC - \lbrace 0 \rbrace$ to the pair $\Big( P\Bigl( \begin{smallmatrix} a & 0 \\ 0 & a^{-1} \end{smallmatrix} \Bigr)P^{-1}, P\Bigl( \begin{smallmatrix} b & 0 \\ 0 & b^{-1} \end{smallmatrix} \Bigr)P^{-1} \Bigr)$.

Note that both maps $\phi,\varphi$ are generically 2:1, but for $a=-1$, which corresponds to $\lambda=\pm i$, $P_0$ belongs to the stabilizer of $A=\begin {pmatrix} \pm i & 0 \\ 0 & \mp i  \end{pmatrix}$. Leaving aside that case for the moment, we have a fibration:
$$
\xymatrix{\CC - \lbrace 0,1 \rbrace \ar[r]^{=} \ar[d] & \CC - \lbrace 0,1 \rbrace \ar[d] \\
\bar{\tilde{Y}}_4\times \CC \setminus \lbrace 0,1 \rbrace \ar[r]^{\phi} \ar[d]^{\pi_1} & Z_0^{''a} \ar[d]^{\pi_2} \\
\bar{\tilde{Y}}_{4} \ar[r]^{\varphi} & \bar{Y}_{4}}
$$
where in $Z_0^{''a}$ we are just removing the $a=-1$ case, so that $ Z_0^{''a}=Z_0^{'a} - \lbrace (A,B)\in Z_0 \mid \tr A=0 \rbrace $ and $
\bar{\tilde{Y}}_4=\CC - \lbrace 0,1,-1 \rbrace \times \pgl /D
$, $
\bar{Y}_4= \lbrace A \in \pgl \mid \tr A \neq \pm 2,0 \rbrace
$
The maps are 2:1, so
\begin{equation} \label{eqn:Z0''a}
e(Z_0^{''a})=e(\bar{\tilde{Y}}_4)^{+}e(\CC - \lbrace 0, 1 \rbrace)^{+}+ e(\bar{\tilde{Y}}_4)^{-}e(\CC - \lbrace 0, 1 \rbrace)^{-}
\end{equation}
The action takes $a$ to $a^{-1}$ on the fibre, so $
e(\CC - \lbrace 0,1 \rbrace)^{+}=e(\CC - \{ 2 \} )=q-1, \:
e(\CC - \lbrace 0,1 \rbrace)^{-}=-1
$. To compute $e(\tilde{\bar{Y}}_4)$, we use the fibration:
$$
\xymatrix{GL(2,\mathbb{C})/D \ar@{=}[r] \ar[d] & GL(2,\mathbb{C})/D \ar[d] \\ \tilde{\bar{Y}}_4 \ar[d]^{\tr} \ar[r] & \bar{Y}_4 \ar[d] \\ \CC \setminus \lbrace 0, 1,-1 \rbrace \ar[r] & \mathbb{C} \setminus \lbrace 0, 2 \rbrace}
$$

So $
e(\tilde{\bar{Y}}_4)^{+}=e(\tilde{\bar{Y}}_4/\mathbb{Z}_2)=e(\bar{Y}_4)
$, 
and:
\begin{eqnarray*}
e(\bar{Y}_4) & = &e(GL/D)^{+}e(\CC \setminus \lbrace 0,1,-1 \rbrace)^{+} +e(GL/D)^{-}e(\CC \setminus \lbrace 0,1,-1 \rbrace)^{-}\\
& = & q^{2}(q-2)+q(-1)\\
& = & q^{3}-2q^{2}-q
\end{eqnarray*}
We deduce from this fact that $ e(\tilde{\bar{Y}}_4)^{+}=q^{3}-2q^{2}-q$, $
e(\tilde{\bar{Y}}_4)^{-}=-2q
$, so going back to (\ref{eqn:Z0''a}):
\begin{eqnarray*}
e(Z_0^{''a}) & = & e(\bar{\tilde{Y}}_4)^{+}e(\CC -\lbrace 0, 1 \rbrace)^{+}+ e(\bar{\tilde{Y}}_4)^{-}e(\CC - \lbrace 0,1 \rbrace)^{-}\\
& = & (q^{3}-2q^{2}-q)(q-1)+(-2q)(-1)=q^{4}-3q^{3}+q^{2}+3q
\end{eqnarray*}

Now, let us compute the remaining fibre, when $a=-1$. In this case, $A=\big( \begin{smallmatrix} \pm i  & 0 \\ 0 & \mp i \end{smallmatrix} \big)$. Now, if $b \neq -1$, we have the diagram:
$$
\xymatrix{GL(2,\mathbb{C})/D \ar[d] \ar@{=}[r] & GL(2,\mathbb{C})/D \ar[d] \\ GL(2,\mathbb{C})/D \times \CC - \lbrace 0,1,-1 \rbrace \ar[r] \ar[d] & Z_0^{'''a} \ar[d]^{\underline{\tr}} \\ \CC - \lbrace 0,1,-1 \rbrace \ar[r] & \CC - \lbrace 0, 2 \rbrace}
$$
where again, the horizontal bottom map takes $a$ to $a + \frac{1}{a}$ and the horizontal middle map takes a pair $(a,P)$ to $(PAP^{-1}, P \Big( \begin{smallmatrix} a & 0 \\ 0 & a^{-1} \end{smallmatrix}\Big)P^{-1})$.
So again
\begin{eqnarray*}
e(Z_0^{'''a}) & = & e(GL(2,\mathbb{C})/D)^{+}e(\CC - \lbrace 0,1,-1 \rbrace)^{+} + e(GL(2,\mathbb{C})/D)^{-}e(\CC - \lbrace 0,1,-1 \rbrace)^{-}\\
& = & q^{3}-2q^{2}-q
\end{eqnarray*}
Finally, we take the case where $\tr A= \tr B=0$, i.e. $a=b=-1$. The pair $(A,B)$ can be simultaneously diagonalized so that
$$
(A,B) = \bigg( \begin{pmatrix} \pm i & 0 \\ 0 & \mp i \end{pmatrix}, \begin{pmatrix} \pm i & 0 \\ 0 & \mp i \end{pmatrix} \bigg)
$$
In this case, we have only to take account of the conjugacy orbit of this single element. The stabilizer is bigger than $D$, since $P_0$ also stabilizes the pair. $P_0$ is an element of order 2 in $\pgl$: it generates a subgroup isomorphic to $\mathbb{Z}_{2}$. So
$$
e(Z_0^{''''a})=e((\pgl/\lbrace D, P_0 \rbrace)=e((\pgl /D)/\mathbb{Z}_{2})=e(\pgl /D)^{+}=q^{2}
$$
Adding all up
$$
e(Z_0^{'a}) =  e(Z_0^{''a})+e(Z_0^{'''a})+e(Z_{0}^{''''a})= q^{4}-2q^{3} +2q
$$

Now, let
$$
Z_{0}'^{b}= \bigg\{ (A,B) \in Z_0 : (A,B) \sim \left( \begin{pmatrix} \lambda & b \\ 0 & \lambda^{-1} \end{pmatrix}, \begin{pmatrix} \mu & y \\ 0 & \mu^{-1} \end{pmatrix} \right), \lambda,\mu = \pm 1 \bigg\} $$
which, is isomorphic to
$$
Z_0'^{b} \cong X_0'^{b}/\mathbb{Z}_2 \times \mathbb{Z}_2
$$
$X_0'^{b}$ has four components, which correspond to $\lambda,\mu = \pm 1$, and the $\mathbb{Z}_2\times \mathbb{Z}_2$-action interchanges them.
So quotienting by $\mathbb{Z}_2\times\mathbb{Z}_2$ leaves us one component and therefore
$$
e(Z_0'^{b}) = (q^{2}-1)(q-1)
$$
(assuming $\lambda=\mu=1$, each orbit contains an element of the type $\left( \begin{pmatrix} 1 & 1 \\ 0 & 1 \end{pmatrix}, \begin{pmatrix} 1 & y \\ 0 & 1 \end{pmatrix} \right)$, we obtain the whole $Z_0'^{b}$ conjugating by elements in $GL(2,\mathbb{C})/U$)

So finally,
$$
e(Z_0')=2e(\pgl)-1 +e(Z_0^{a})+e(Z_0^{b})= (q^3-q)(q+1).
$$

\subsection*{\boldmath{$Z_1= \lbrace (A,B) \in \pgl^{2} \mid [A,B]=-\Id \rbrace $}}
Let $(A,B)\in SL(2,\mathbb{C})^{2}$ be such that $[A,B]=-\Id$, this forces that $\tr A=\tr B=0$. There exists a basis such that with respect to it,
$$
A=\begin{pmatrix} i & 0 \\ 0 & -i \end{pmatrix}, B= \begin{pmatrix} 0 & 1 \\ -1 & 0 \end{pmatrix}
$$
so that there a single $\pgl$-orbit, $X_{1}\cong \pgl$ and the moduli space is a single point. For the $\pgl$-case, we want to declare equivalent $(A,B)\sim (-A,B) \sim (A,-B) \sim (-A,-B)$ inside the $\pgl$-orbit, so let us consider
$$
G'= \lbrace P\in GL(2,\mathbb{C}) : PAP^{-1}=A, \; PBP^{-1}=-B \rbrace \cong \mathbb{C}^{\ast} =\bigg\{ \begin{pmatrix} a & 0 \\ 0 & -a \end{pmatrix} a\in \mathbb{C}^{\ast} \bigg\}
$$
$$
G''=\lbrace P\in GL(2,\mathbb{C}) : PAP^{-1}=-A, \; PBP^{-1}=B \rbrace \cong \mathbb{C}^{\ast} =\bigg\{ \begin{pmatrix} 0 & b \\ -b & 0 \end{pmatrix} b\in \mathbb{C}^{\ast} \bigg\}
$$

If we regard $G',G''$ projectively, each of these subgroups is just a point of order 2 in $\pgl$
$$
[G']=\left[ \begin{pmatrix} 1 & 0 \\ 0 & -1 \end{pmatrix} \right], \quad 
[G'']=\left[ \begin{pmatrix} 0 & 1 \\ -1 & 0 \end{pmatrix} \right].
$$
The subgroup of $\pgl$ generated by $G,G'$, which we will denote by $H$, is isomorphic to $\mathbb{Z}_{2} \times \mathbb{Z}_{2}$. We have that
$$
Z_1=X_1/\langle \tau_{1} \times \Id, \Id \times \tau_{2} \rangle \cong S \times \Big( \pgl/H \Big)
$$
Note that $\pgl$ acts by conjugation on the slice $P \rightarrow (PA_{0}P^{-1},PB_{0}P^{-1})$ and so does $H$, but the action of $H$ on the orbit, $\pgl$, is by translations $P\rightarrow PH$. All that remains to compute is $e(\pgl/H)$. Recall that when a finite subgroup $G$ of a Lie group acts by translation on a space $X$
\begin{eqnarray*}
r_g : X\times G & \longrightarrow & X \\
(x,g) & \longrightarrow & xg
\end{eqnarray*}
the action is homotopically trivial: we can build a path, for every $g\in G$, that connects $g$ and $\Id\in G$ and that allows us to build a homotopy between the given action and the trivial action.
We get from this that $e(H^{\bullet}(X/H))=e(H^{\bullet}(X))$. In our case, $H$ is a finite subgroup of the connected Lie group $\pgl$ acting on the space $X=\pgl$.
So $e(\pgl/H)=e(\pgl)$ and therefore
$$
e(Z_1)=e(\pgl/H)=e(\pgl)=q^{3}-q.
$$

\subsection*{\boldmath{$Z_{2}= \lbrace (A,B)\in \pgl^{2} \mid [A,B] \sim J_{+} \rbrace$}}

We consider
$$
Z_2=\lbrace (A,B)\in \pgl^{2} : [A,B]\sim J_{+} \rbrace, \quad
\overline{Z}_{2}=\lbrace (A,B)\in \pgl^{2} : [A,B]= J_{+} \rbrace
$$
which are quotients of the $\sldos$-representation varieties  $X_{2}$ and $\oX_{2}$ respectively.
Looking at the latter \cite{lomune}, the conditions imposed by the equation $[A,B]=J_{+}$ force $(A,B)$ to be of the form
$$
A= \begin {pmatrix} a & b \\ 0 & a^{-1} \end{pmatrix}, B=\begin{pmatrix} x & y \\ 0 & x^{-1} \end{pmatrix},
$$
where $(a,b,x,y)$ satisfy the equation
$$
yx(a^{2}-1) -ba(x^{2}-1)=1,
$$
which is compatible with the $\ZZ_{2}$-actions $(a,b,x,y)\sim (-a,-b,x,y)$ and $(a,b,x,y)\sim (a,b,-x,-y)$ that arise when we consider the $\pgl$-case. $\oX_{2}$ can be regarded as a family of lines $L_{(a,x)}$ parametrized by $(a,x)\in \CC^\ast \times \CC^{\ast} - \lbrace (\pm 1, \pm 1) \rbrace$, we look for the E-polynomial of the quotient under the $\ZZ_2$-actions. Since $e(L_{(a,x)})^{+}=e(L_{(a,x)})$, applying (\ref{eqn:+-}) repeatedly yields
$$
e(\overline{Z}_2)  = e(\CC)e( \mathbb{C}^{\ast} \times \mathbb{C}^{\ast} - \{(\pm 1, \pm 1) \} /\ZZ_{2}\times \ZZ_2)
 =  q((q-1)^{2}-1) =  q^{3}-2q^{2}
$$
and
$$
e(Z_2)=e(GL(2,\mathbb{C})/U)e(\overline{Z}_2)=(q^{2}-1)(q^{3}-2q^{2})=(q^3-q)(q^2-2q).
$$

\subsection*{\boldmath{$Z_3= \lbrace (A,B) \in \pgl^{2} \mid [A,B] \sim J_{-} \rbrace $}}
\label{strataz3}
If we write
$$
A=\begin{pmatrix} a & b \\ c & d \end{pmatrix} \qquad B=\begin{pmatrix} x & y \\ z & w \end{pmatrix}
$$
The equations $[A,B]=J_{-}, A,B \in \sldos$ are equivalent to the set of equations
\begin{eqnarray}
 z=2(x+w) \\ c=-2(a+d) \\ cy+2dw+bz=0 
\end{eqnarray}
where we consider the actions $(a,b,c,d)\sim (-a,-b,-c-d)$ and $(x,y,z,w)\sim (-x,-y,-z,-w)$ for the $\pgl$-case.
We may subdivide $Z_3,\overline{Z}_3$ in the following substrata, following the $\sldos$-case \cite{lomune}:
\subsubsection*{Case 1: $\tr B=0$} This case provides a space $\overline{X}_{3}'$ isomorphic to $\CC^{\ast} \times \lbrace \pm i \rbrace$, where $\ZZ_{2}\times \ZZ_2$ acts by identifying both components and takes $a\in \CC^{\ast}$ to $-a\in \CC^{\ast}$, so $e(\overline{Z}_{3}')=e(\CC^{\ast} \times \lbrace \pm i \rbrace /\ZZ_2)e(Stab(J_{+}))=(q-1)q$ and $e(Z_{3}')=(q-1)(q^{3}-q)$.

\subsubsection*{Case 2: $\tr B \neq 0$}
In this case, conjugating by a suitable matrix sets $w=0, z=2x$ and
$$
A=\begin{pmatrix} a & \frac{-(a+d)}{2x^{2}} \\ -2(a+d) & d \end{pmatrix}, \quad 
B=\begin{pmatrix} x & -1/2x \\ 2x & 0 \end{pmatrix}.
$$
Since $\det A=1$, we arrive to the equation
$$
ad-\frac{(a+d)^2}{x^{2}}=1
$$
We stratify again in three subcases
\subsubsection*{$S^{a} : a+d=0$}
In this case, we get that $a=-d= \pm i$ , so $A$ is fixed and $B$ is parametrized by taking $x\in \CC^{\ast}$, the $\ZZ_2$-actions take $x\to -x$ and $i\to -i$. So $
e(S^{a})=e(\CC^{\ast}/\ZZ_2)=q-1.$
\subsubsection*{$S^{b},S^{c}: a+d \neq 0$}
In this case, we may take the parameter $u=\frac{(a+d)^{2}}{x^{2}}$, invariant under the $\ZZ_2$-actions, so that the resulting equation is $ad=1+u$. We may subdivide again in two cases according to the values of $u$:
\begin{itemize}
\item[-] $u= -1$. The resulting equation is $ad=0, (a,d)\neq(0,0)$. We get 
$$
e(S^{b})=e(\{ad=0\}/\ZZ_2)=2e(\CC^{\ast})=2(q-1).
$$
\item[-] $u\neq 0,-1$ Now, for every $u \in \mathbb{C} - \lbrace 0,-1 \rbrace$, $ad=1+u$, $(a,d)\sim (-a,-d)$. If we complete the set of solutions projectively we get a conic for every $u$, which gives a contribution of $(q+1)(q-2)$ ($u\neq 0,1$). We still have to remove the points at infinity that contribute $2(q-2)$ and the points which satisfy $a+d=0$, which gives us the equation $-a^{2}=1+u$, again a conic with three points removed: it has polynomial $q-2$. We get that
$$
e(S^{c})=(q+1)(q-2)-2(q-2)-(q-2)=(q-2)^{2}
$$
\end{itemize}
Adding all up
$$
e(S)=e(S^{a})+e(S^{b})+e(S^{c})=q^{2}-q+1
$$
So
$$
e(\overline{Z}_{3}'')=q(q^{2}-q+1), \quad e(Z_{3}'')=q(q^{2}-1)(q^{2}-q+1).
$$
Finally,
$$
e(\overline{Z}_{3})=q^{3}, \quad e(Z_{3})=e(Z_{3}')+e(Z_{3}'')=(q^{3}-q)q^{2}.
$$

\subsection*{\boldmath{$Z_4=\lbrace (A,B) \in \pgl^{2} \mid [A,B]\sim \xi \rbrace$}}
\label{strataz4}
Finally, we compute the E-polynomial of the last stratum,
$$
Z_4=\lbrace (A,B) \in \pgl^{2} \mid [A,B] \sim \begin{pmatrix} \lambda & 0 \\ 0 & \lambda^{-1}\end{pmatrix} \lambda\neq 0 \pm 1 \rbrace.
$$
We also define, for $A,B\in \pgl$
$$
\overline{Z}_4=\lbrace (A,B, \lambda) \mid [A,B] = \begin{pmatrix} \lambda & 0 \\ 0 & \lambda^{-1}\end{pmatrix} \lambda\neq 0 \pm 1 \rbrace, \quad \overline{Z}_{4,\lambda}=\lbrace (A,B) \mid [A,B] = \begin{pmatrix} \lambda & 0 \\ 0 & \lambda^{-1}\end{pmatrix} \rbrace.
$$
Analogous definitions were given in \cite{lomune} for the $\sldos$-representation spaces $X_{i},\overline{X}_{i}, \oX_{4,\lambda}$. Like we did before, we can compute the $E$-polynomials of $Z_{4}$ and $\overline{Z}_{4}$ regarding these spaces as quotients  of $X_{4}$ and $\overline{X}_{4}$ by the torus action, which is a $\mathbb{Z}_{2}\times\mathbb{Z}_{2}$-action. We stratify $X_{4}$ closely following \cite{lomune}, obtaining slices $S_{i}$ for the conjugacy action and analyzing the $\ZZ_2$-actions in each case. For each of the strata $\overline{X}^{i}_{4}$, using that the stabilizer of $\xi$ in $\pgl$ is isomorphic to $\mathbb{C}^{\ast}$,
$$
\overline{X}_{4}^{i}\cong \mathbb{C}^{\ast}\times S^{i} \quad X^{i}_{4}\cong (\pgl \times S^{i})/\mathbb{Z}_2
$$
where the latter $\ZZ_2$-action is given by conjugation by $P_{0}$. For $Z_{4}^{i}$ and $\overline{Z}_{4}^{i}$, since the $\ZZ_2$-actions commute with the conjugacy action, we will just need to find the quotients of each slice, so that 
$$
\overline{Z}_{4}^{i}\cong \mathbb{C}^{\ast}\times (S^{i}/(\ZZ_2\times \ZZ_2)) \quad X^{i}_{4}\cong (\pgl \times (S^{i}/(\ZZ_2 \times \ZZ_2)))/\mathbb{Z}_2
$$
For $(A,B)\in \overline{X}_{4,\lambda}$, $\tr B=\tr \:\xi B$, and we also have that $\tr A = \tr \:\xi^{-1}A$. Therefore
$$
A= \begin{pmatrix} a & b \\ c & \lambda^{-1}a \end{pmatrix} \quad B=\begin{pmatrix} x & y \\ z & \lambda x\end{pmatrix}
$$
Finally, the equality $AB=\xi BA$ and the equations arising from the $\det A= \det B =1$, give us
\begin{eqnarray}
\lambda^{-1}a^{2}-bc &=& 1 \label{eqn:x4lam1} \\
\lambda x^{2}-yz &=& 1 \label{eqn:x4lam2}\\
ax+bz &=& \lambda(ax+cy) \label{eqn:x4lam3}
\end{eqnarray}
The $\ZZ_2$-actions are $(a,b,c)\sim (-a,-b,-c)$ and $(x,y,z)\sim (-x,-y,-z)$. Note that equations (\ref{eqn:x4lam1}), (\ref{eqn:x4lam2}), (\ref{eqn:x4lam3}) are compatible with them. We will stratify $Z_{4}$ according to different values of $a,b,c,x,y$ and $z$.

\subsubsection*{\textbf{Hodge monodromy of }\boldmath{$\overline{Z}_{4}$}}
Along with the stratification of $\overline{Z}_{4}$, we will study the Hodge monodromy representation (\ref{eqn:Hodge-mon-rep}) of the fibration
$$
\pi: \overline{Z}_{4}\longrightarrow \mathbb{C} \setminus \lbrace 0, \pm 1 \rbrace,
$$
where $(A,B,\lambda)$ is mapped to $\lambda$. The fibres are the spaces $\overline{Z}_{4,\lambda}$, diffeomorphic and of balanced type. We take loops $\gamma_{-1},\gamma_{0}, \gamma_{1}$ around $ -1,0,1 $ respectively, our aim is to compute the Hodge monodromy representation
$$
R(\overline{Z}_{4})\in R(\Gamma)[q]
$$
where $\Gamma = \langle \gamma_{-1},\gamma_{0},\gamma_{1} \rangle \cong H_{1}(B)$. We will proceed computing $R(\overline{Z}_{4}^{i})$ in each stratum $\overline{Z}_{4}^{i}$. Since $\overline{Z}_{4,\lambda}^{i}\cong S_{\lambda}^{i}\times \CC^{\ast}$, we will have that $R(\overline{Z}_{4}^{i})=(q-1)R(S^{i})$.

\subsubsection*{\boldmath{$Z_4^{0}=\lbrace b=c=0 \rbrace$}}
If $(A,B)\in X_4$ and $b=c=0$, the resulting equations are:
$$
a^{2} = \lambda, \quad
x = 0, \quad
yz =-1 
$$
This makes the set of pairs $(A,B)\in Z^{0}_{4}$ to be of the form:				
$$
\begin{pmatrix} a & 0 \\ 0 & a^{-1} \end{pmatrix} \quad \begin{pmatrix} 0 & y \\ -\frac{1}{y} & 0 \end{pmatrix} 
$$
where $y\in \CC^{\ast}, a\in \CC - \lbrace 0,\pm i, \pm 1 \rbrace , y\sim -y, a\sim -a$. The slice $S_{4}^{0} $ is just a single point. We deduce that
$e(\overline{Z}_{4,\lambda}^{0})=(q-1), \: e(\overline{Z}_{4}^{0})=(q-3)(q-1)$ and also
$$
R(\overline{S}_{4}^{0})=T
$$

To compute $Z_{4}^{0}$, note that $\overline{Z^{0}_{4}}$ gives us a slice for the action by conjugation of $\pgl /D$:
\begin{eqnarray*}
\overline{Z_{4}^{0}}\times \pgl /D & \longrightarrow & Z_{4}^{0} \\
((A,B,\lambda),P) & \longrightarrow & (P^{-1}AP,P^{-1}BP)
\end{eqnarray*}
so that, taking into account the action of $\ZZ_2$ on $\overline{Z_{4}^{0}}$ given by permutation of the eigenvalues, 
$$
e(Z_4^{0})=e(\overline{Z_{4}^{0}})^{+}e(\pgl/D)^{+}+e(\overline{Z_{4}^{0}})^{-}e(\pgl /D)^{-}
$$

To compute $e(\overline{Z_4^{0}})^{+}$, since $\overline{Z_{4}^{0}}$ is parametrized by pairs $(a,y), a \sim -a, y\sim -y$, we may use as parameters $A=a^{2}\in \CC-\lbrace 0,1,-1 \rbrace,Y=y^{2} \in \CC^{\ast}$. The $\mathbb{Z}_{2}$-action takes $A$ to $A^{-1}$ and $Y$ to $\frac{-1}{Y}$, so the quotient is parametrized by $A+\frac{1}{A} \in \CC - \lbrace 0,2 \rbrace$ and $Y-\frac{1}{Y}\in \CC$.

So:
$$
e(\overline{Z^{0}_{4}}/\mathbb{Z}_{2})=(q-2)q+1 \quad 
e(\overline{Z^{0}_{4}})^{-}=e(\overline{Z^{0}_{4}})-e(\overline{Z^{0}_{4}})^{+}=-2q+2
$$
and finally:
$$
e(Z_{4}^{0})=q^{2}(q^{2}-2q+1)+q(2-2q)=(q^{3}-q)(q-2)
$$
\subsubsection*{\boldmath{$Z_4^{1}=\lbrace x=y=0 \rbrace$}}
This case is analogous to the previous one, we get
$$
R(S_{4}^{1})=T, \quad e(\overline{Z}^{1}_{4}/\mathbb{Z}_{2})=(q-2)q+1, \quad 
e(Z_{4}^{1})=(q^{3}-q)(q-2)
$$
\subsubsection*{\boldmath{$Z_4^{2}= \lbrace b=0, c\neq 0 \rbrace \cup \lbrace b\neq 0, c=0 \rbrace$}}
Let us consider the case
$$
\overline{Z_{4}^{2}}= \lbrace (A,B)\in Z_4 \mid b=0, c\neq 0 \rbrace \cup \lbrace (A,B)\in Z_4 \mid b\neq 0, c=0 \rbrace
$$
First, we can rescale so that $c=1$. $\overline{Z_{4}^{2}}$ has two strata, the equations for the first one are
$$
a^{2} =  \lambda, \quad
a^{2}x^{2}-yz  =  1, \quad
(1-\lambda)ax = \lambda y
$$
From the last two equations, we deduce that
$$
y= \frac{(1-\lambda)ax}{\lambda}, \quad
z= \frac{a^{2}x^{2}-1}{y}
$$
so that $\overline{Z_{4}^{2}}$ is parametrized by pairs $(a,x)$ where $a\in \CC - \lbrace 0,1,i \rbrace, x\in \CC^{\ast}, x\sim -x, a\sim -a$. We denote this slice by $S_{4}^{2}$. When we look at the monodromy of the slice as we go around the origin, $a$ gets swapped to $-a$, but they are already identified and the monodromy is trivial. There are two isomorphic components, so
$$
R(S_{4}^{2})=2(q-1)T
$$
and if we  multiply by the stabilizer, we get that
$$
e(\overline{Z_{4,\lambda}^{2}})=2(q-1)^2 \quad
e(\overline{Z_{4}^{2}})=2(q-3)(q-1)(q-1)
$$
To compute $Z_{4}^{2}$, note that the $\ZZ_2$-action interchanges the two components. Therefore
$ Z^{2}_{4} \cong \pgl \times S^{2}_{4} $ and 
$$
e(Z^{2}_{4})=(q^{3}-q)(q-1)(q-3).
$$
\subsubsection*{\boldmath{$Z_4^{3}=\lbrace x=0, y\neq 0 \rbrace \cup \lbrace x\neq 0, y=0 \rbrace$ }}
Similarly,  
$$
R(S_{4}^{3})=2(q-1)T, \quad e(\overline{Z_{4}^{3}})=2(q-3)(q-1)(q-1), \quad e(Z^{3}_{4})=(q^{3}-q)(q-1)(q-3)
$$
\subsubsection*{\boldmath{$Z_4^{4}=\lbrace b=z=0 \rbrace \cup \lbrace c=y=0 \rbrace$}}
This component is given as the intersection of $Z_{4}^{2}$ and $Z_{4}^{3}$, since it is impossible that $b=0$ and $y=0$ (or $c=0,z=0$) simultaneously.
The conditions $b=z=0$ and the slice fixing condition $c=1$ yield the set of equations:
$$
a^{2} =  \lambda, \quad
x^{2} = \frac{1}{\lambda}, \quad
(\lambda-1)ax + y\lambda  =  0 
$$
where again $x\sim -x, a\sim -a$. We point out that $a,x$ are both fully determined each value of $\lambda$, as well as $y$. Therefore, the slice consists of two points and it is parametrized by $\lambda \in \mathbb{C} \setminus \lbrace 0, \pm 1 \rbrace$. There is no monodromy around the origin because of the identifications $x\sim \-x$ and $a \sim -a$. We get that $R(S_{4}^{4})=2T$ and $e(\overline{Z_{4,\lambda}^{4}})=2(q-1)$.
Recalling that again we have two isomorphic components,we get:
$$
e(\overline{Z^{4}_{4}})=2(q-1)(q-3)
$$
The $\mathbb{Z}_{2}$ action interchanges them, so $Z_{4}^{4}\cong \mathbb{C} \setminus \lbrace 0, \pm 1 \rbrace \times \pgl$ and $e(Z_4^{4})=(q^{3}-q)(q-3)$.
\subsubsection*{\boldmath{$Z_4^{5}=\lbrace b,c,y,z \neq 0 \rbrace$}}
Equations (\ref{eqn:x4lam1}),(\ref{eqn:x4lam2}), (\ref{eqn:x4lam3}), when we fix $b=1$, become
\begin{eqnarray*}
\lambda^{-1}a^{2}-bc & = & 1 \\
\lambda x^{2}-yz & = & 1 \\
ax+bz & = & \lambda (ax+cy)
\end{eqnarray*}
where
$$
A= \begin{pmatrix} a & 1 \\ c & \lambda^{-1}a \end{pmatrix}, \quad B= \begin{pmatrix} x & y \\ z & \lambda x \end{pmatrix}.
$$
The action of $\mathbb{Z}_{2} \times \mathbb{Z}_{2}$, generated by $\phi, \varphi$, where $\phi((A,B))=(-A,B), \: \varphi((A,B))=(A,-B)$ plus the slice fixing condition is now the following
$$
A= \begin{pmatrix} a & 1 \\ c & \lambda^{-1}a \end{pmatrix} \overset{\phi}{\longrightarrow} -A= \begin{pmatrix} -a & -1 \\ -c & -\lambda^{-1}a \end{pmatrix} \overset{\text{slice}}{\longrightarrow} \begin{pmatrix} -a & 1 \\ c & -\lambda^{-1}a \end{pmatrix}
$$
$$
\begin{pmatrix} x & y \\ z & \lambda x \end{pmatrix} \overset{\text{slice}}{\longrightarrow} \begin{pmatrix} x & -y \\ -z & \lambda x \end{pmatrix}
$$
and:
$$
\begin{pmatrix} x & y \\ z & \lambda x \end{pmatrix} \underset{\varphi}{\longrightarrow} \begin{pmatrix} -x & -y \\ -z & -\lambda x \end{pmatrix}
$$
Combining the three equations as it was done in [1], we see that $\overline{Z_{4}^{5}}$ is given by the single equation
\begin{equation}
\lambda x^{2}+a(1-\lambda)xy+(\lambda -a^{2})y^{2}=1 \label{eqn:z4lamconic}
\end{equation}
under the equivalence relation $(a,x,y)\sim^{\phi}(-a,x,-y)$ and $(a,x,y)\sim^{\varphi}(a,-x,-y)$. These are quotients of affine conics over the plane $(x,y)$ parametrized by $(a,\lambda)$, $a^{2}\neq \lambda$ (this condition is equivalent to $c\neq 0$). The conics have discriminant
$$
D=((\lambda + 1)a-2\lambda)((\lambda + 1)a + 2\lambda)
$$
The extra $\mathbb{Z}_{2}$-action given by permutation of the eigenvalues, considering the slice fixing condition $b=1$ is the following \cite{lomune}
\begin{eqnarray*}
a & \rightarrow & \lambda^{-1}a \\
c & \rightarrow & c \\
x & \rightarrow & \lambda x \\
y & \rightarrow & \frac{z}{c}= \lambda y + \frac{\lambda(\lambda -1)a}{a^{2}-\lambda}x \\
z & \rightarrow & yc
\end{eqnarray*}
We divide the study of this space depending on whether $D=0$ or not (cases $Z_{4}^{5}$ and $Z_{4}^{6},Z_{4}^{7},Z_{4}^{8}$ respectively).
When $D=0$, then:
$$
a=\pm \frac{2\lambda}{\lambda + 1}
$$
Note that the action of $\phi$ interchanges both components. Therefore, quotienting by $\phi$ leaves us the value $a=\frac{2\lambda}{\lambda + 1}$. In this case, the equation of the family of conics is
$$
\bigg( x-\frac{\lambda - 1}{\lambda +1}y \bigg)^{2}=\frac{1}{\lambda}
$$
under the equivalence relation $(x,y)\sim^{\varphi}(-x,-y)$. Taking $\mu := x-\frac{\lambda +1}{\lambda -1}y$ so that $\mu^{2}=\lambda^{-1}$, the action of $\varphi$ takes $\mu$ to $-\mu$, so the new coordinates are pairs $(x,\mu)$ with the relation $(x,\mu)\sim(-x,-\mu)$, where $\mu\neq \lbrace 0, \pm 1, \pm i \rbrace$ and $x\neq \pm \mu$. We may change variables again taking  $X:=\frac{x}{\mu}$, so that we get a bijection between pairs $(x,\mu)$ and pairs $(X,\mu)$. Now, the $\varphi$- action leaves $X$ invariant ($(X,\mu)\sim (X,-\mu)$), and the slice is parametrized by pairs $(X,\mu^{2})$ where $X\neq \pm 1$ and $\mu^{2} \neq \lbrace 0, 1,-1 \rbrace$. The monodromy around the origin is trivial: takes $\mu$ to $-\mu$, which are already identified by $\varphi$. Its Hodge monodromy representation is
$$
R(S_{\lambda,4}^{5})=(q-2)T
$$
and also
$$
e(\overline{Z_{4,\lambda}^{5}})=(q-1)(q-2), \quad 
e(\overline{Z_{4}^{5}})=(q-1)(q-2)(q-3)
$$
To compute $Z_{4}^{5}$, the $\mathbb{Z}_{2}$-action takes $x$ to $\mu^{-2}x$ and takes $\mu$ to $-\mu^{-1}$. Therefore, it takes $X$ to $-X$ and $\mu^{2}$ to $\mu^{-2}$. The set of pairs $(X,\mu^{2})$ under this equivalence relation has polynomial
$$
e(S^{5}/\mathbb{Z}_{2})=(q-1)(q-2)+1.
$$
So
$$
e(Z_{4}^{5})=(q^{3}-q)((q-1)(q-2)+1)
$$
\subsubsection*{\boldmath{$Z_4^{6}=\lbrace a,c,x,y,D\neq 0 \rbrace$}}
We deal now with the case where the discriminant is non-zero. Recall equation (\ref{eqn:z4lamconic}):
$$
\lambda x^{2}+a(1-\lambda)xy+(\lambda- a^{2})y^{2}=1,
$$
where the $\mathbb{Z}_{2}\times\mathbb{Z}_{2}$-action identifies $(a,x,y)\sim^{\phi}(-a,x,-y)$ and $(a,x,y)\sim^{\varphi}(a,-x,-y)$. When $D\neq 0$, if we add up the points at infinity we get over each possible $(a,\lambda)$ before quotienting a conic, $\mathbb{P}^{1}$, thus obtaining a conic bundle $\mathbb{P}^{1} \longrightarrow E \longrightarrow \lbrace (a,\lambda) \mid a^{2}\neq \lambda, a\neq \pm 2\lambda/\lambda+1  \rbrace$.

Now $\phi$ takes $E_{(a,\lambda)}$ to $E_{(-a,\lambda)}$, so it takes fibres to fibres and therefore quotienting by $\phi$ we get a conic bundle $\mathbb{P}^{1}\longrightarrow E/\phi \longrightarrow \lbrace (a^{2},\lambda) \mid a^{2}\neq \lambda, 4\lambda^{2}/(\lambda+1)^{2}\rbrace$. (the $\phi$-action identifies both values $a$ and $-a$). $\varphi$ makes identifications in the fibre over $(a^{2},\lambda)$: it takes $(x,y)\in E_{(a,\lambda)}$ to $(-x,-y)\in E_{(a,\lambda)}$. We get
$$
\mathbb{P}^{1}/\varphi \longrightarrow E/\mathbb{Z}_{2}\times \mathbb{Z}_{2} \longrightarrow \lbrace (a^{2},\lambda) \rbrace
$$
The quotient $\mathbb{P}^{1}/\varphi$ is isomorphic to $\mathbb{P}^{1}$, with fixed points at infinity (there are two of them). The base is parametrized by pairs $ (a^{2},\lambda)$ satisfying $\lambda \neq 0,\pm 1, a^{2}\neq \lambda, \frac{4\lambda^{2}}{(\lambda+1)^{2}}$, and has polynomial $(q-3)(q-1)-(q-3)=(q-3)(q-2)$. Therefore
$$
e(S_{4}^{6})=e(B)e(\mathbb{P}^{1})=(q+1)(q-3)(q-2)
$$
and consequently $e(\overline{Z_{4,\lambda}^{6}})=e(Stab)e(S_{4,\lambda}^{6})=(q-1)(q-2)(q+1)$ and also $e(\overline{Z_{4}^{6}})=e(Stab)e(S_{4}^{6})=(q-1)(q+1)(q-2)(q-3).$ There is no monodromy: going around $\lambda=0$ does  neither affect the conic fibration or the excluded values for the base. We also get
$$
R(S_{\lambda}^{6})=(q+1)(q-2)T
$$

To compute $Z_{4}^{6}$, we have to take account of the $\mathbb{Z}_{2}$-action given by the permutation of the eigenvalues. In terms of our stratification, it was checked in \cite{lomune} that the action takes
$$
(a,\lambda)\mapsto(\lambda^{-1}a, \lambda^{-1})
$$
and takes the element $(x,y)\in E_{(a,\lambda)}$ to $(\lambda x, \lambda y + \frac{\lambda(\lambda -1)a}{a^{2}-\lambda}x)\in E_{\lambda^{-1}a,\lambda^{-1}}$. Besides, the $\mathbb{Z}_{2}$-action is compatible with the actions given by $\phi$ and $\varphi$. Since the action takes fibres to fibres, our aim is to compute $B/\mathbb{Z}_{2}$. If we consider $A=\frac{a^{2}}{\lambda}$, note that the $\mathbb{Z}_{2}$-action takes $A\longrightarrow \frac{\lambda a^{2}}{\lambda^{2}}=A$, so $A$ remains invariant and there is a bijection between pairs $(A,\lambda)$ and pairs $(a^{2},\lambda)$. The action is now $(A,\lambda)\sim (A,-\lambda)$, so we consider the parameter $s=\lambda+\lambda^{-1}$. Hence the quotient $B/\mathbb{Z}_{2}$ is parametrized by pairs $(A,s)$ where
\begin{itemize}
\item $\lambda \neq 0,\pm 1$ implies that $s \neq \pm 2$
\item $a^{2}\neq \frac{4\lambda^{2}}{(\lambda + 1)^{2}}$ implies that $A\neq \frac{4}{2+s}$.
\item $a^{2}\neq \lambda$ implies that $A\neq 1$.
\end{itemize}
which is isomorphic to $\mathbb{C} - \lbrace 1 \rbrace \times \mathbb{C} - \lbrace \pm 2 \rbrace$. We still have to remove the subset $A=\frac{4}{2+s}$, isomorphic to $\mathbb{C} - \lbrace 2,-2 \rbrace$. We can conclude that $e(B/\mathbb{Z}_{2})=(q-1)(q-2)-(q-2)=(q-2)^{2}$ and deduce from this fact that
$$
e(Z_{4}^{6})=e(B/\mathbb{Z}_{2})e(\mathbb{P}^{1})e(\pgl)=(q^{3}-q)(q+1)(q-2)^{2}
$$

\subsubsection*{\boldmath{$Z_4^{7}=\lbrace a,c,x,y, D\neq 0, yz=0 \rbrace $}}
We have to remove the points from the case $yz=0$. To equations (\ref{eqn:x4lam1}), (\ref{eqn:x4lam2}), (\ref{eqn:x4lam3}) we need to add the condition $yz=0$, which is equivalent to $x^{2}=\frac{1}{\lambda}$.
From equation (\ref{eqn:z4lamconic}), this forces $y=0$ or $y=\frac{(\lambda -1)ax}{\lambda -a^{2}}$. We recall that we also have to consider the $\mathbb{Z}_{2}\times \mathbb{Z}_{2}$-action given by $\phi,\varphi$.
Taking this into account, we distinguish two cases:
\begin{itemize}
\item $a=0$. In this case we get a unique value for $y$, $y=0$. Also both values of $x$ are identified by $\varphi$; the action of $\phi$ is trivial since $a=0,y=0$. For fixed $\lambda$ this is just a point and when $\lambda$ varies we get a contribution of $(q-3)$.
\item $a\neq 0$. In this case there are two different values of $y$. For each $(a,\lambda)$, there are two possible values of $x$ which give rise to two possible values of $y$, i.e, four points in each fibre.	Now $\varphi$ identifies these points in pairs, identifying $(x,0)\sim (-x,0)$ and $(x, \frac{(\lambda -1)ax}{\lambda -a^{2}}) \sim (-x, \frac{(\lambda -1)a-x}{\lambda -a^{2}}) $. Besides, the $\phi$-action identifies the points over $(a,\lambda)$ and $(-a,\lambda)$.
\end{itemize}

We can take $a^{2}$ as a parameter for base, with three excluded values. There are two components (corresponding to $y=0$ and $y\neq 0$), so it has polynomial $e(S^{7}_{\lambda})=2(q-3)+1$.
The total slice has E-polynomial $e(S_{4}^{7})=2(q-3)^{2}+(q-3)$. We get:
$$
e(\overline{Z_{4,\lambda}^{7}})=(q-1)(2q-5), \quad
e(\overline{Z_{4}^{7}})=(q-1)(2(q-3)^{2}+(q-3))
$$
The monodromy around $\gamma_{0}$ takes $x$ to $-x$ and therefore swaps the two non-zero values of $y$, but so does the $\varphi$-action too and the monodromy is therefore trivial
$$
R(S_{\lambda,4}^{7})=(2(q-3)+1)T=(2q-5)T
$$

To compute $Z_{4}^{7}$, we have to consider the $\mathbb{Z}_{2}$-action.
It takes $a \longrightarrow\lambda^{-1}a$, $\lambda \longrightarrow \lambda^{-1}$, $x\longrightarrow \lambda x=x^{-1}$ and $ 0 \longrightarrow \frac{\lambda(\lambda -1)ax}{a^{2}-\lambda}=\frac{x^{2}a(1-\lambda^{-1})x^{-1}}{a^{2}x^{4}-\lambda^{-1}}.
$
Note that this last value (which we denote by $y'$) is the non-zero $y$-value over $(\lambda^{-1}a,\lambda^{-1})$.
In other words, the $\mathbb{Z}_{2}$-action takes the value $(x,0)$ over $(a,\lambda)$ to $(x^{-1},y')$ over $(\lambda^{-1}a,\lambda^{-1})$. It also takes the non-zero $y$-value $(x,y)$ over $(a,\lambda)$ to $(x^{-1},0)$ over $(\lambda^{-1}a,\lambda^{-1})$. So the $\mathbb{Z}_{2}$-action identifies the two components given by the values which satisfy $y\neq 0$ and the values which satisfy $y=0$: we get rid of this $\mathbb{Z}_{2}$-action by considering just one component. The contribution from $a=0$ is $q-2$ and the contribution from $a\neq 0$ is $(q-3)^{2}$. Hence:
$$
e(Z_{4}^{7})=e(B/\mathbb{Z}_{2})e(\pgl)=((q-3)^2+(q-2))(q^{3}-q)
$$

\subsubsection*{\boldmath{$Z_4^{8}= \lbrace a,c,x,y,D\neq 0, \textbf{points at infinity} \rbrace$}}
Finally, we remove the points at infinity of the $\mathbb{P}^{1}$-bundle. The equation for these points is
$$
(\lambda -a^{2})y^{2}+a(1-\lambda)xy+\lambda x^{2}=0
$$
Taking $Y=\frac{y}{x}$ as a parameter,
$$
\bigg( Y+\frac{1}{2}\frac{a(1-\lambda)}{\lambda -a^{2}} \bigg)^{2}=\frac{D}{4(\lambda-a^{2})^{2}}
$$
where $(a,Y)\sim (-a,-Y)$. Now, writing $\alpha=Y+\frac{1}{2}\frac{a(1-\lambda)}{\lambda-a^{2}}$, we get the equation $ \alpha^{2}=\frac{D}{4(\lambda-a^{2})^{2}}$. Since $D=a^{2}(1+\lambda)^{2}-4\lambda^{2}$, defining $A:=\frac{(1+\lambda)a}{\lambda}$ and $B:=\frac{2\alpha(\lambda-a^{2})}{\lambda}$
we finally arrive at the equation
$$
B^{2}=A^{2}-4
$$
where $(A,B)\sim (-A,-B)$ by the $\phi$-action. Notice that $A,B$ are for every $\lambda$ in bijection with pairs $(a,Y)$. In addition, $\lambda \neq 0,\pm 1$, $A\neq \pm 2 (D\neq 0)$ and $A^{2}\neq \frac{(1+\lambda)^{2}}{\lambda} (a^{2}\neq \lambda)$.

We get a conic for fixed $\lambda$ despite of the $\mathbb{Z}_{2}$-action (its $\phi$-quotient is again isomorphic to $\mathbb{P}^{1}$) with 3 points removed (two at infinity and $(2,0)\sim (-2,0)$). Varying $\lambda$ it gives us a contribution of $(q-3)(q-2)$. Besides, we still have to remove the points corresponding to $A^{2}=\frac{(1+\lambda)^{2}}{\lambda}$. Substituting into the equation, we get that $A=\pm \frac{(1+\lambda)}{(1-\lambda)}B$. So for each $\lambda$, we get four points $(A,B)$ identified by pairs ($(A,B)\sim (-A,-B)$) that need to be removed. It gives a contribution of $2(q-3)$, so:
$$
e(S_{4}^{8})=(q-3)(q-2)-2(q-3)=(q-3)(q-4)
$$
and therefore $e(\overline{Z_{4,\lambda}^{8}})=(q-1)(q-4)$ and $e(\overline{Z_{4}^{8}})=(q-1)(q-3)(q-4)$. The monodromy action is trivial again (for $\sldos$, it swaps the missing points, which are already identified in the $\pgl$-case) so
$$
R(S_{4}^{8})=(q-4)T
$$

Finally, the $\mathbb{Z}_{2}$-action leaves $A$ and $B$ invariant and sends $\lambda$ to $\lambda^{-1}$, so  we can take $s=\lambda+\lambda^{-1}$ as a parameter. We have again conics with three points deleted and two excluded values for $s$, namely $s=\pm 2$. This gives a contribution of $(q-2)(q-2)$.
In addition, we have to remove the subvariety defined by $A^{2}=\frac{(1+\lambda)^{2}}{\lambda}$, i.e. $A^{2}=s+2$.
Since $s\neq \pm 2$, we get that $A\neq 0, \pm 2$ and therefore $(A,B)\neq (0,\pm 2i),(\pm 2,0)$. The subvariety is a conic $B^{2}=A^{2}-4$ with the two points at infinity and the two points $(0,2i),(2,0)$ removed, giving a contribution of $q-3$. Therefore
$$
e(Z_{4}^{8})=(q^{3}-q)((q-2)(q-2)-(q-3))
$$

So, adding all up:
\begin{eqnarray*}
e(Z_{4}) & = & e(Z_{4}^{0})+e(Z_{4}^{1})+e(Z_{4}^{2})+e(Z_{4}^{3})-e(Z_{4}^{4})+e(Z_{4}^{5})+e(Z_{4}^{6})-e(Z_{4}^{7})-e(Z_{4}^{8})\\
& = & (q^{3}-q)(q^3-2q^2-2)
\end{eqnarray*}

We also get that:
$$
e(\overline{Z}_{4,\lambda})=(q-1)(q^2+q+1)=q^3-1
$$
$$
e(\overline{Z}_{4})=q^4-3q^3-q+3
$$
Finally,
$$
e(Z)  =  e(Z_{0})+e(Z_{1})+e(Z_{2})+e(Z_{3})+e(Z_{4}) = e(\pgl^{2})
$$
as it was expected.
Adding up all Hodge monodromy representations, we have also obtained
\begin{prop}
The monodromy of the fibration given by $\pi:\overline{Z}_{4}\rightarrow \CC -\lbrace 0, \pm 1 \rbrace$ is trivial. Therefore
$$
R(\overline{Z}_{4})=(q^{3}-1)T
$$
\end{prop}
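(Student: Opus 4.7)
The plan is to assemble the nine stratum-by-stratum Hodge monodromy computations that have already been performed. By additivity of compactly-supported cohomology under locally closed stratifications---and hence of the Hodge monodromy representation---one has
\begin{align*}
R(\oZ_4) &= R(\oZ_4^0)+R(\oZ_4^1)+R(\oZ_4^2)+R(\oZ_4^3)-R(\oZ_4^4) \\
&\quad +R(\oZ_4^5)+R(\oZ_4^6)-R(\oZ_4^7)-R(\oZ_4^8),
\end{align*}
with the same signs that appear above in the expression for $e(Z_4)$.

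In each of the nine strata, the preceding analysis has produced a Hodge monodromy representation of the slice of the form $R(S_4^i)=c_i(q)T$, where $T$ denotes the trivial representation of $\Gamma=\langle\gamma_{-1},\gamma_0,\gamma_1\rangle$. Since the stabilizer of $\xi_\lambda$ in $\pgl$ is isomorphic to $\CC^{*}$, the product decomposition $\oZ_{4,\lambda}^i\cong\CC^{*}\times S_\lambda^i$ yields $R(\oZ_4^i)=(q-1)c_i(q)T$, so the alternating sum is itself a $\ZZ[q]$-multiple $f(q)T$ of $T$. This is exactly the assertion that the monodromy of $\pi$ is trivial. The coefficient $f(q)$ is then pinned down by restricting to any fibre: $f(q)=e(\oZ_{4,\lambda})=(q-1)(q^2+q+1)=q^3-1$, which has already been computed.

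The only genuine verification required---and the main potential obstacle---is to confirm stratum by stratum that each $R(S_4^i)$ really is a multiple of $T$. In several strata the naive monodromy around $\lambda=0$ acts by sign changes such as $a\mapsto -a$ or $x\mapsto -x$, or by swapping two chiral components of a reducible slice (as happens in $\oZ_4^2,\oZ_4^3,\oZ_4^7$); in each case these permutations coincide with identifications already imposed by the $\ZZ_2\times\ZZ_2$-action generated by $\phi$ and $\varphi$, so the induced action on $S_\lambda^i$ is trivial. Because this bookkeeping is built into each of the preceding subsections, no additional computation is needed and the proposition follows immediately.
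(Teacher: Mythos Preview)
Your proposal is correct and follows exactly the paper's approach: the proposition is stated immediately after the sentence ``Adding up all Hodge monodromy representations, we have also obtained,'' so the paper's proof is precisely the stratum-by-stratum summation you describe, with each $R(S_4^i)$ already shown to be a multiple of $T$ in the preceding subsections. Your write-up is simply a more explicit account of that one-line summary; the only minor quibble is that in $\oZ_4^2$ and $\oZ_4^3$ the monodromy around $\lambda=0$ acts by $a\mapsto -a$ within each component rather than by swapping the two components $\{b=0\}$ and $\{c=0\}$, but this does not affect the argument.
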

\begin{rem}
There is a map between Hodge monodromy representations $R(\overline{X}_{4})\rightarrow R(\overline{Z}_{4})$, induced by the quotient map between fibrations $\overline{X}_{4}\rightarrow \overline{Z}_{4}$. Since $R(\overline{X}_{4})=(q^{3}-1)T+(-3q^{2}+3q)N$, we see that the Hodge monodromy representation of $\overline{Z}_{4}$ corresponds to the invariant part of $R(\overline{Z}_{4})$.

\end{rem}
\subsection*{Hodge monodromy representation of $\overline{Z}_{4}/\mathbb{Z}_{2}$}
We consider as the last basic block the Hodge monodromy representation of the fibration $\overline{Z}_{4}/\mathbb{Z}_{2} \rightarrow \CC -\lbrace \pm 2 \rbrace$, where we recall that $\mathbb{Z}_{2}$ acts on $\overline{Z}_{4}$ in the following way:
$$
(A,B,\lambda)\longrightarrow (PAP^{-1},PBP^{-1},\lambda^{-1})
$$
and fibres over $B'= \CC -\lbrace 0,\pm 1 \rbrace /\ZZ_2$, where $\lambda \sim \lambda^{-1}$,  isomorphic to $\CC -\lbrace \pm 2 \rbrace$. Let us write $\Gamma=\langle \gamma_{-1},\gamma_{0},\gamma_{1} \rangle$ and $\Gamma'=\langle \nu_{2}, \nu_{-2} \rangle$ for the fundamental groups of $B$ and $B'=B/\ZZ_2$ respectively. Under the quotient map: $\lambda \longrightarrow s=\lambda + \lambda^{-1}$, we get an exact sequence:
\begin{align*}
\Gamma\longrightarrow \Gamma' & \longrightarrow \mathbb{Z}_2\longrightarrow 0 \\
\gamma_{1} & \longrightarrow  2\nu_{2} \\
\gamma_{-1}& \longrightarrow  2\nu_{-2} \\
\gamma_{0} & \longrightarrow \nu_{2}+\nu_{-2} 
\end{align*}

Since the monodromy action of $\Gamma$ was trivial on $\overline{Z}_{4}$, we get that the actions of $2\nu_{2},2\nu_{-2}$ and $\nu_{2}+\nu_{-2}$ are trivial too. Hence the monodromy representation descends to a representation of $\mathbb{Z}_{2}$ and therefore
$$
R(\overline{Z}_{4}/\mathbb{Z}_{2})\in R(\mathbb{Z}_{2})[q]
$$
We will denote by $T,N$ the trivial and non-trivial representations of $\mathbb{Z}_{2}$ respectively. We will use again the stratification defined in previous sections that is compatible with the monodromy action. 

We need to check the monodromy when we describe a small loop around $2$ in the $s$-plane $\mathbb{C} - \lbrace \pm 2 \rbrace$, i.e, around a small loop $\nu:=\nu_{2}$ which we take as a generator of $\mathbb{Z}_{2}\cong \Gamma'/\Gamma$. To understand the action, we lift the loop to a path in the $\lambda$-plane $B$ and we fix the fibre over a point $b_0=1+\epsilon$.
The resulting lifted path goes from $1+\epsilon$ to $\frac{1}{1+\epsilon}$, followed by the $\mathbb{Z}_{2}$-action to identify the fibre.

We denote by $F^{i}=\overline{Z^{i}_{4,\lambda}}\cong S^{i}_{\lambda}\times \mathbb{C}^{\ast}$ the fibre of the fibration $\overline{Z^{i}_{4,\lambda}}\longrightarrow B'$; we will also denote by $e(F^{i})^{\nu}$ for the Hodge Deligne polynomial of $H^{\ast}_{c}(F^{i})^{\nu}$,i.e. the invariant part of the cohomology under the action of $\nu$. Proceeding for each stratum:
\begin{itemize}
\item $\overline{Z_{4}^{0}}/\mathbb{Z}_{2}, \overline{Z_{4}^{1}}/\ZZ_2$. The fibre consists of a copy of $\mathbb{C}^{\ast}$, parametrized by $Y=y^{2}$. The $\mathbb{Z}_{2}$-action takes $Y$ to $-\frac{1}Y$. Therefore
$$
e(F)=q-1, \quad e(F)^{\nu}=q
$$
If we denote $R(\overline{Z^{0}_{4}}/\mathbb{Z}_{2})=aT+bN$, we get that $a+b=e(F)$ and $a=e(F)^{\nu}$,so:
$$
R(\overline{Z^{0}_{4}}/\mathbb{Z}_{2})= R(\overline{Z^{1}_{4}})= qT-N
$$

\item $\overline{Z_{2}^{4}}/\mathbb{Z}_{2}, \overline{Z_{4}^{3}}/\ZZ_2$. We have two components, $\lbrace b\neq0, c=0 \rbrace \cup \lbrace b=0,  c\neq 0 \rbrace$, where each one is a $\mathbb{C}^{\ast}$-bundle over $\mathbb{C}^{\ast}$. The action of $\mathbb{Z}_{2}$ interchanges the components, so
$$
e(F)=2(q-1)^{2}, \quad e(F)^{\nu}=(q-1)^{2}
$$
and therefore
$$
R(\overline{Z^{2}_{4}}/\mathbb{Z}_{2})= R(\overline{Z^{3}_{4}}/\mathbb{Z}_{2})=(q-1)^{2}T+(q-1)^{2}N
$$

\item $\overline{Z_{4}^{4}}/\mathbb{Z}_{2}$. In this case, we have again two components, given by $\lbrace b=z=0 \rbrace$ and $\lbrace c=y=0 \rbrace$, where each one is isomorphic to $\mathbb{C}^{\ast}$,the stabilizer. The $\mathbb{Z}_{2}$-action interchanges them, so
$$
e(F)=2(q-1), \quad e(F)^{\nu}=(q-1)
$$
$$
R(\overline{Z^{4}_{4}}/\mathbb{Z}_{2})=(q-1)T+(q-1)N
$$
\item $\overline{Z_{4}^{5}}/\mathbb{Z}_{2}$ From the equations arising in the $SL(2,\mathbb{C})$-case, we obtained two different components, given by $a=\pm \frac{2\lambda}{\lambda +1}$, which were identified in the $\pgl$-case by $\phi$. Each of the components consisted of two parallel lines of equations:
$$
\Big( x- \frac{\lambda -1}{\lambda +1}y \Big)^{2}=\frac{1}{\lambda}
$$
which were identified by the $\varphi$-action. So the fibre is a $\mathbb{C}^{\ast}$-bundle over a single line $L$ with two points removed. The monodromy given by the $\nu$-action takes the line of equation:
$$
\sqrt{\lambda}x-\sqrt{\lambda}\frac{\lambda-1}{\lambda+1}y=1
$$
to the other parallel line (identified with the first one by the $\varphi$-action):
$$
-\sqrt{\lambda}x+\sqrt{\lambda}\frac{\lambda-1}{\lambda+1}y=1
$$
since $\sqrt{\lambda}x \rightarrow \sqrt{\lambda}x$ and $\sqrt{\lambda}\frac{\lambda -1}{\lambda +1}y \longrightarrow -\sqrt{\lambda}\frac{\lambda -1}{\lambda +1}y+2\sqrt{\lambda}x$. Therefore the monodromy action interchanges the lines which were already identified, but note that it also interchanges the removed points corresponding to $x=\pm \frac{1}{\sqrt{\lambda}}$. Therefore, $e(L)^{+}=q-1, e(L)^{-}=-1$, and using that $e(\mathbb{C}^{\ast})^{+}=q, e(\mathbb{C}^{\ast})^{-}=-1$
$$
e(F)=(q-2)(q-1)=q^2-3q+2, \quad e(F)^{\nu}=(q-1)q+1=q^2-q+1
$$
so:
$$
R(\overline{Z_{4}^{5}}/\mathbb{Z}_{2})=(q^2-q+1)T+(-2q+1)N
$$

\item $\overline{Z_{4}^{6}}/\mathbb{Z}_{2}$ The fibre is a $\mathbb{C}^{\ast}$-bundle over a family of conics over a line parametrized by $a^{2}$, with some excluded values. Looking at the previous sections, we get that
$$
e(F)=(q+1)(q-2)(q-1)
$$
The $\mathbb{Z}_{2}$-action that appears in the study of the monodromy of $\nu$ leaves the line fixed and turns $\mathbb{C}^{\ast}$ inside out.
We get that
$$
e(F)^{\nu}=q(q+1)(q-2)
$$
and therefore
$$
R(\overline{Z_{4}^{6}}/\mathbb{Z}_{2})=(q+1)(q-2)qT-(q+1)(q-2)N
$$

\item $\overline{Z_{4}^{7}}/\mathbb{Z}_{2}$ The fibre was a $\mathbb{C}^{\ast}$-bundle over two lines intersecting at the origin with two points removed in each one. Therefore,
$$
e(F)=(2q-5)(q-1).
$$
The action of $\nu$ identifies both lines and also turns $\mathbb{C}^{\ast}$ inside-out, so the quotient of the lines is a single punctured line $\mathbb{C}^{\ast}$ minus two points, and the quotient of $\mathbb{C}^{\ast}$ by this action is $\mathbb{C}$. Hence
$$
e(F)^{\nu}=((q-2)q-(q-3))
$$
and:
$$
R(\overline{Z_{4}^{7}}/\mathbb{Z}_{2})=(q^{2}-3q+3)T+(q^{2}-4q+2)N
$$

\item $\overline{Z_{4}^{8}}/\mathbb{Z}_{2}$ In this last stratum the fibre $F$ was a $\mathbb{C}^{\ast}$-bundle over a hyperbola of equation $B^{2}=A^{2}-4$, where $(A,B)\sim (-A,-B)$ and some excluded points: two at infinity, $(2,0)\sim (-2,0)$ and $\big( \frac{1+\lambda}{\sqrt{\lambda}},\frac{1-\lambda}{\sqrt{\lambda}} \big) \sim \big( -\frac{1+\lambda}{\sqrt{\lambda}},-\frac{1-\lambda}{\sqrt{\lambda}} \big)$ and $\big( \frac{1+\lambda}{\sqrt{\lambda}}, -\frac{1-\lambda}{\sqrt{\lambda}} \big) \sim \big( -\frac{1+\lambda}{\sqrt{\lambda}}, \frac{1-\lambda}{\sqrt{\lambda}} \big)$.

$\nu$ leaves the infinity points and $(2,0)\sim (-2,0)$ invariant, it turns $\mathbb{C}^{\ast}$ inside-out and also interchanges the two excluded points
$$
\frac{1+\lambda}{\sqrt{\lambda}} \longrightarrow \frac{1+\frac{1}{\lambda}}{(\sqrt{\lambda})^{-1}}=\frac{\lambda+1}{\sqrt{\lambda}}
$$
$$
\frac{1-\lambda}{\sqrt{\lambda}} \longrightarrow \frac{1-\frac{1}{\lambda}}{(\sqrt{\lambda})^{-1}}=-\frac{(1-\lambda)}{\sqrt{\lambda}}
$$
We get that
$$
e(F)=(q-1)(q-4), \quad e(F)^{\nu}=q(q-3)+1
$$
obtaining
$$
R(\overline{Z_{4}^{8}})=(q^{2}-3q+1)T+(-2q+3)N
$$
\end{itemize}
Adding all up, we finally obtain:
$$
R(\overline{Z}_{4}/\mathbb{Z}_{2})=q^{3}T-N
$$
We have proved the following proposition
\begin{prop}
The Hodge monodromy representation of the fibration $\overline{Z}_{4}/\ZZ_2 \longrightarrow \CC - \lbrace \pm 2 \rbrace$ factors through $\ZZ_2$. If we write $T,N$ for the trivial and non-trivial representation respectively, then
\begin{equation}
R(\overline{Z}_{4}/\mathbb{Z}_{2})=q^{3}T-N \label{eqn:HodgemonZ4Z2}
\end{equation}
\end{prop}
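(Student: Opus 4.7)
The plan is to compute the Hodge monodromy representation by the stratification $\overline{Z}_4 = \bigsqcup_{i=0}^{8} \overline{Z}_4^{i}$ already established, and pass each stratum through the $\mathbb{Z}_2$-quotient by $\lambda \mapsto \lambda^{-1}$. First I would use the preceding proposition, which asserts that the monodromy representation of $\pi:\overline{Z}_4 \to \mathbb{C}-\{0,\pm1\}$ is trivial, together with the short exact sequence $\Gamma \to \Gamma' \to \mathbb{Z}_2 \to 0$ relating the fundamental groups of the base before and after the involution. Since the images of the generators $\gamma_{-1},\gamma_0,\gamma_1$ in $\Gamma'$ all act trivially, the monodromy of $\overline{Z}_4/\mathbb{Z}_2$ must descend to a representation of the quotient group $\mathbb{Z}_2 \cong \Gamma'/\Gamma$, justifying the form $R(\overline{Z}_4/\mathbb{Z}_2) \in R(\mathbb{Z}_2)[q]$.

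Next I would fix the generator $\nu = \nu_2$ of $\mathbb{Z}_2$ and a basepoint $b_0 = 1+\epsilon$ in the $s$-plane, lifting a small loop around $2$ to the path from $1+\epsilon$ to $(1+\epsilon)^{-1}$ in the $\lambda$-plane followed by the eigenvalue-swap $\mathbb{Z}_2$-action. For each fibre $F^i = \overline{Z}_{4,\lambda}^i = S_\lambda^i \times \mathbb{C}^\ast$, writing $R(\overline{Z}_4^i/\mathbb{Z}_2) = a_i T + b_i N$, the identities $a_i + b_i = e(F^i)$ and $a_i = e(F^i)^\nu$ reduce the problem to computing the $\nu$-invariant part of the cohomology of each fibre, which can be read off from the explicit geometric descriptions of the slices $S^i$ built in the previous subsections.

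Stratum by stratum, the action of $\nu$ on each fibre follows from tracking the lifted path: on $\overline{Z}_4^0$ and $\overline{Z}_4^1$ the $\mathbb{C}^\ast$-coordinate $Y = y^2$ is inverted up to sign; on $\overline{Z}_4^2,\overline{Z}_4^3,\overline{Z}_4^4$ the two components are interchanged; on $\overline{Z}_4^5$ the two parallel lines (already identified by $\varphi$) are again swapped, but so are the two removed points on each; on $\overline{Z}_4^6$ the base conic is left fixed while the stabilizer $\mathbb{C}^\ast$ is inverted; on $\overline{Z}_4^7$ the two component lines of the base are interchanged; and on $\overline{Z}_4^8$ the hyperbola $B^2 = A^2-4$ is stabilized at the three distinguished points, the stabilizer is turned inside out, and the two remaining excluded points get exchanged. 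The main obstacle is the careful bookkeeping in $\overline{Z}_4^5$ and $\overline{Z}_4^8$, where the $\nu$-action both permutes the punctures and acts nontrivially on the $\mathbb{C}^\ast$-factor, requiring the relations $e(\mathbb{C}^\ast)^+ = q$, $e(\mathbb{C}^\ast)^- = -1$ to properly disentangle the $\pm$-parts.

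Once each contribution $R(\overline{Z}_4^i/\mathbb{Z}_2) = a_i T + b_i N$ is recorded, the final step is to sum the nine contributions. The trivial parts should telescope to $q^3$ (consistent with $e(\overline{Z}_{4,\lambda}) = q^3 - 1$ plus a $+1$ correction from $\nu$-invariance accounting) and the non-trivial parts to $-1$, yielding $R(\overline{Z}_4/\mathbb{Z}_2) = q^3 T - N$. A useful cross-check is that applying the linear map $e$ of Theorem~\ref{thm:general-fibr} to this expression, with $e(T) = q-2$ and $e(N) = -1$, must reproduce $e(\overline{Z}_4/\mathbb{Z}_2) = (q-2)q^3 + 1$, which can be computed independently from the previously obtained $e(\overline{Z}_4) = q^4 - 3q^3 - q + 3$ via formula (\ref{eqn:eoX4Z2}) and serves as a sanity check for the stratum-wise sum.
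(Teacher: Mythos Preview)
Your proposal is correct and follows essentially the same approach as the paper: reduce to $R(\mathbb{Z}_2)[q]$ via the exact sequence $\Gamma \to \Gamma' \to \mathbb{Z}_2$, then compute $e(F^i)^\nu$ stratum by stratum using the explicit $\mathbb{Z}_2$-action on each slice, exactly as the paper does. Two small remarks: remember that strata $\overline{Z}_4^{4}$, $\overline{Z}_4^{7}$, $\overline{Z}_4^{8}$ enter with a minus sign in the final sum (they are overcounts/compactification corrections), and your proposed cross-check is not truly independent, since formula (\ref{eqn:eoX4Z2}) already presupposes the Hodge monodromy coefficients rather than extracting $e(\overline{Z}_4/\mathbb{Z}_2)$ from $e(\overline{Z}_4)$ alone; it is the same computation in a different guise.
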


\begin{rem}
Note that if we denote by $R(\overline{Z}_{4}/\mathbb{Z}_{2})=cT+dN$, we get that:
$$
R(\overline{Z_{4}})=(c+d)T=(q^{3}-1)T
$$
Besides, applying (\ref{eqn:eoX4Z2})
$$
e(\overline{Z}_{4}/\mathbb{Z}_{2})  = (q-2)a-b =  q^4-2q^3+1
$$
\end{rem}

%%%%%%%%%%%%%%%%%%%%%%%%%%%%%%%%%%%%%%%%%%%%%%%%%%%%%%%%%%%%%%%%%%%%%%%%%%%%%%%%%%%%%%%
\section{Character varieties of curves of genus $g\geq 2$} \label{chapter:highgenus}
%%%%%%%%%%%%%%%%%%%%%%%%%%%%%%%%%%%%%%%%%%%%%%%%%%%%%%%%%%%%%%%%%%%%%%%%%%%%%%%%%%%%%%%
Let $g\geq 1$ be any natural number. We define the following sets:
\begin{itemize}
\item $\oZ_{0}^{g}= \lbrace (A_{1},B_{1},\ldots, A_{g},B_{g}) \in \pgl^{2g} \mid \prod_{i=1}^{g}[A_{i},B_{i}]= \Id\}$.
\item $\oZ_{1}^{g}= \lbrace (A_{1},B_{1},\ldots, A_{g},B_{g})\in \pgl^{2g} \mid \prod_{i=1}^{g}[A_{i},B_{i}]= -\Id\}$.
\item $\oZ_{2}^{g}= \lbrace (A_{1},B_{1},\ldots, A_{g},B_{g}) \in \pgl^{2g}\mid \prod_{i=1}^{g}[A_{i},B_{i}]= J_+=
 \begin{pmatrix} 1& 1\\ 0 & 1 \end{pmatrix}\}$.
\item $\oZ_{3}^{g}= \lbrace (A_{1},B_{1},\ldots, A_{g},B_{g}) \in \pgl^{2g}\mid \prod_{i=1}^{g}[A_{i},B_{i}]=  J_-=
 \begin{pmatrix} -1& 1\\ 0 & -1 \end{pmatrix}\}$.
\item $\oZ_{4,\lambda}^{g}= 
\lbrace (A_{1},B_{1},\ldots, A_{g},B_{g}) \in \pgl^{2g}\mid \prod_{i=1}^{g}[A_{i},B_{i}]= \xi_\lambda=
\begin{pmatrix} \lambda & 0 \\ 0 & \lambda^{-1} \end{pmatrix} \rbrace$, where $\lambda\in \CC-\{ 0, \pm 1\}$.
\item $\oZ_{4}^{g}= 
\lbrace (A_{1},B_{1},\ldots, A_{g},B_{g},\lambda) \in \pgl^{2g}\x (\CC-\{ 0, \pm 1\}) \mid 
\prod_{i=1}^{g}[A_{i},B_{i}]= \begin{pmatrix} \lambda & 0 \\ 0 & \lambda^{-1} \end{pmatrix} \rbrace$.
\end{itemize}

and the fibration 
 $$ 
  \oZ_4^g  \too  \CC-\{ 0, \pm 1\}
 $$
whose fibres are $\oZ_{4,\lambda}^{g}$. There is an action of $\ZZ_2$ on $\oZ_4^g$ given by
$ (A_{1},\ldots,B_{g},\lambda)\mapsto (P^{-1}_0A_1P_0,\ldots, P^{-1}_0B_gP_0,\lambda^{-1})$, with $P_0=
\left(\begin{array}{cc}    0 & 1 \\ 1 & 0  \end{array} \right)$ and the associated quotient fibration $\oZ_{4}/\ZZ_2 \rightarrow \CC - \lbrace \pm 2 \rbrace$. With this notation, the building blocks $\overline{Z}_{i}$ of Section \ref{sec:basicblocks} are just $\overline{Z}_{i}^{1}$.

We introduce the following notation. Let
 $$
 \tilde{e}_0^g=e(\oZ_0^g), \ \tilde{e}_1^g=e(\oZ_1^g),  \ \tilde{e}_2^g=e(\oZ_2^g), \ \tilde{e}_3^g=e(\oZ_3^g) \in \ZZ[q].
 $$
be the E-polynomials of the genus $g$ representation spaces, and let us also consider the Hodge monodromy representation of $\oZ_{4} \rightarrow \CC -\lbrace \pm 2 \rbrace$. We will see that the monodromy group is $\Gamma=\ZZ_2$, generated by $\nu_{2}$ around the puncture $2$ of 
$\CC-\{\pm 2\}$ so that the ring $R(\Gamma)$ is generated by the trivial and non-trivial representations $T$ and $N$ respectively, as it happened when the genus was equal to 1. Therefore, we write
 \begin{equation}\label{eqn:abcdg}
 R(\oZ_4^g/\ZZ_2)=\tilde{a}_g T+ \tilde{b}_{g}N,
 \end{equation}
where $\tilde{a}_g,\tilde{b}_g \in  \ZZ[q]$.
For  each $g\geq 1$, we have the following vector
 $$
  v^g=( \tilde{e}_0^g, \tilde{e}_1^g, \tilde{e}_2^g,\tilde{e}_3^g,\tilde{a}_g,\tilde{b}_g)
  $$
consisting of six polynomials. Section \ref{sec:basicblocks} gives us $v^{1}$, and we will use induction to compute $v^{g}$.

In the following sections we aim to prove the following result by induction on $g$.

\begin{thm} \label{thm:Polynomialseig}
For all $g\geq 1$, the Hodge monodromy representation of $\overline{Z}_{4}^{g}$ is of the form (\ref{eqn:abcdg}), and 
\begin{align*}
\tilde{e}_{0}^{g} & = (q^{3}-q)^{2g-1}+q(q^{2}-1)^{2g-1}+\frac{1}{2}(q(q-1)(q^{2}+q)^{2g-1}+(q-2)(q+1)(q^{2}-q)^{2g-1}),\\
\tilde{e}_{1}^{g} & = (q^{3}-q)^{2g-1}+q(q^{2}-1)^{2g-1}-\frac{1}{2}((q-1)(q^{2}+q)^{2g-1}+(q+1)(q^{2}-q)^{2g-1}), \\
\tilde{e}_{2}^{g} & = (q^{3}-q)^{2g-1}+\frac{1}{2}(-q(q^{2}+q)^{2g-1}+(q-2)(q^{2}-q)^{2g-1}), \\
\tilde{e}_{3}^{g} & = (q^{3}-q)^{2g-1}+\frac{1}{2}((q^{2}+q)^{2g-1}-(q^{2}-q)^{2g-1}), \\
\tilde{a}^{g} & = (q^{3}-q)^{2g-1}+\frac{1}{2}((q^{2}+q)^{2g-1}-(q^{2}-q)^{2g-1}), \\
\tilde{b}^{g} & = (q^{2}-1)^{2g-1}-\frac{1}{2}((q^{2}+q)^{2g-1}+(q^{2}-q)^{2g-1}).
\end{align*}
\end{thm}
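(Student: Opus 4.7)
The plan is to prove all six formulas simultaneously by induction on $g$. The base case $g=1$ is a direct verification: substituting $g=1$ into the closed-form expressions and comparing with the results of Section \ref{sec:basicblocks} recovers $\tilde{e}_0^1 = (q^3-q)(q+1) = e(\oZ_0)$, and similarly for $\tilde{e}_1^1, \tilde{e}_2^1, \tilde{e}_3^1$, together with $\tilde{a}^1 = q^3$ and $\tilde{b}^1 = -1$, matching (\ref{eqn:HodgemonZ4Z2}). The remainder of the proof organizes the inductive step around a linear recursion $v^g = M v^{g-1}$ with $M \in \mathrm{Mat}_{6\x 6}(\ZZ[q])$ modelling handle attachment $\Sigma_g = \Sigma_{g-1} \# \Sigma_1$.

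To derive the recursion, for each $i \in \{0,1,2,3,4\}$ one stratifies $\oZ_i^g$ by the conjugacy class of the partial product $C' = \prod_{k=1}^{g-1}[A_k, B_k]$. Since $C'$ lies in exactly one of the five conjugacy classes $\Id, -\Id, J_+, J_-, \xi_\lambda$, one obtains five strata indexed by $j \in \{0,1,2,3,4\}$. The stratum for index $j$ is the preimage, under the natural projection $\oZ_i^g \to \oZ_j^{g-1}$ defined by forgetting $(A_g, B_g)$, of $\oZ_j^{g-1}$, with fibre the genus-one building block $\{(A,B) \in \pgl^2 : [A,B] \sim C_i \cdot (C')^{-1}\}$. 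The E-polynomial of each fibre is readable off Section \ref{sec:basicblocks}, and Theorem \ref{thm:general-fibr} combined with (\ref{eqn:+-}) assembles these into the entries of $M$ corresponding to the four rows $\tilde{e}_0^g, \ldots, \tilde{e}_3^g$.

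For the rows of $M$ corresponding to $\tilde{a}^g$ and $\tilde{b}^g$, one works with the fibration $\oZ_4^g/\ZZ_2 \to \CC - \{\pm 2\}$. The Hodge monodromy at genus $g$ is computed from the genus $g-1$ data using the same stratification, but one must now track the action of the monodromy generator $\nu$, which swaps $\lambda \leftrightarrow \lambda^{-1}$ and induces compatible $\ZZ_2$-actions on each stratum. Formula (\ref{eqn:eoX4Z2}) and its variants recover the contributions of each stratum to the trivial and non-trivial components of $R(\oZ_4^g/\ZZ_2)$, completing the construction of $M$. The closed-form expressions are then checked by observing that $q^3 - q,\ q^2 - 1,\ q^2 + q,\ q^2 - q$ are eigenvalues of $M$, so that iterating $M$ from $v^1$ produces linear combinations of their $(g-1)$-th powers; the precise coefficients are pinned down by the base case, and one verifies $v^g = M v^{g-1}$ directly from the closed forms.

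The main obstacle is the explicit determination of the entries of $M$ that mix the parabolic rows with the non-parabolic rows. These entries package the Hodge monodromy of the genus-one parabolic fibration $\oZ_4^1 \to \CC - \{0, \pm 1\}$ together with the $\ZZ_2$-quotient structure and with the $\ZZ_2 \x \ZZ_2$-actions responsible for the passage from $\sldos$ to $\pgl$. Careful bookkeeping of the relevant representation-ring elements $T, N$ (and, at intermediate steps, $S_2, S_{-2}, S_0$) is required, and the self-consistency check $\sum_i e(\oZ_i^g) = e(\pgl^{2g})$ serves as a useful cross-verification of the final formulas.
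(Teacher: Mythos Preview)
Your overall strategy matches the paper's: induction on $g$ via a linear recursion $v^g = M v^{g-1}$ coming from handle attachment, with $v^1$ supplied by Section~\ref{sec:basicblocks}, followed by diagonalisation of $M$ to obtain the closed forms. Two points deserve correction or comment.

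First, your eigenvalue claim is wrong. The closed formulas involve $(q^3-q)^{2g-1}$, $(q^2-1)^{2g-1}$, $(q^2\pm q)^{2g-1}$, so passing from $g-1$ to $g$ multiplies each of these by its \emph{square}. The eigenvalues of $M$ are therefore $(q^3-q)^2$, $(q^2-1)^2$, $(q^2+q)^2$ (double), $(q^2-q)^2$ (double), not the quantities you list. With your stated eigenvalues, iterating $M$ from $v^1$ would produce exponents $g$, not $2g-1$. This is easy to fix but should be stated correctly.

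Second, for the rows corresponding to $\tilde a^g$ and $\tilde b^g$ you propose to track the $\nu$-monodromy on $\oZ_4^g/\ZZ_2$ directly through the stratification. The paper takes a shorter route: it computes only the \emph{sum} $\tilde a^g+\tilde b^g$ via $R(\oZ_4^g)$ (whose monodromy over $\CC-\{0,\pm1\}$ is already known to be trivial by induction), and then separates $\tilde a^g$ and $\tilde b^g$ using the global constraint $\sum_i e(\text{stratum}) = e(\pgl)^{2g}$, which you mention only as a cross-check. Your direct approach should work in principle, but the bookkeeping of the $\ZZ_2$-action on each stratum of $\oZ_4^g/\ZZ_2$ is considerably heavier than the single linear equation the paper uses; the constraint-based separation is the cleaner argument here. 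Similarly, for the Jordan rows $\tilde e_2^g, \tilde e_3^g$ the stratification is not quite the na\"ive ``fibre over $\oZ_j^{g-1}$'' picture you describe: one stratifies by the pair of traces $(t_1,t_2)=(\tr\nu,\tr\delta)$, and the contributions involve $e(\oZ_4^{g-1}/\ZZ_2)$ and a fibred-product term $\overline V_5$, not just the five building blocks separately.
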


To compute the E-polynomials of $\overline{Z}_{i}^{g}$, the idea for induction was already used for $\sldos$ in \cite{mamu2} and can be replicated again for $\pgl$, using this time the basic blocks computed in Section \ref{sec:basicblocks}.  Let $g=k+h$. We can consider a connected sum decomposition
$\Sigma_{k+h}=\Sigma_k\# \Sigma_h$. If we know $v_k,v_h$, and given that
$R( \oZ_4^k/\ZZ_2)$, $R( \oZ_4^h/\ZZ_2)$ are of the form (\ref{eqn:abcdg}), then we can obtain the E-polynomial information for $\overline{Z}_{g}$ via the following gluing identity
 \begin{equation} \label{eqn:uno}
\prod_{i=1}^{k+h}[A_{i},B_{i}]= C  \iff \prod_{i=1}^{k}[A_{i},B_{i}]= C \prod_{i=1}^{h}[B_{k+i},A_{k+i}].
 \end{equation}
which allows us to stratify $\oZ_{i}^{g}$ as follows.

\subsection*{E-polynomial of $\overline{Z}_{0}^{g}$}
Let us fix $C=\Id$ and using (\ref{eqn:uno}), we decompose $\oZ_{0}^{k+h}= \bigsqcup W_{i}$, where
\begin{itemize}
\item $W_{0}=\lbrace (A_{1},B_1,\ldots,A_{k+h},B_{k+h}) \mid \prod_{i=1}^{k}[A_{i},B_{i}]=  \prod_{i=1}^{h}[B_{k+i},A_{k+i}] =\Id \rbrace \cong \oZ_{0}^{k}\times  \oZ_{0}^h$.
\item $W_{1}=\lbrace (A_{1},B_1,\ldots,A_{k+h},B_{k+h}) \mid \prod_{i=1}^{k}[A_{i},B_{i}]=  \prod_{i=1}^{h}[B_{k+i},A_{k+i}] =-\Id \rbrace \cong \oZ_{1}^{k}\times  \oZ_{1}^h$.
\item $W_2=\lbrace (A_{1},B_1,\ldots,A_{k+h},B_{k+h}) \mid \prod_{i=1}^{k}[A_{i},B_{i}]=  \prod_{i=1}^{h}[B_{k+i},A_{k+i}] \sim J_{+} \rbrace$. There is a fibre bundle
$U\to \PGL(2,\CC)\times \oZ_{2}^{k}\times \oZ_{2}^h \to W_2$
\item $W_3=\lbrace (A_{1},B_1,\ldots,A_{k+h},B_{k+h}) \mid \prod_{i=1}^{k}[A_{i},B_{i}]=  \prod_{i=1}^{h}[B_{k+i},A_{k+i}] \sim J_{-} \rbrace$. Again, $U\to \PGL(2,\CC)\times \oZ_{3}^{k}\times \oZ_{3}^h \to W_3$.
\item $W_{4}= \lbrace (A_{1},B_1,\ldots,A_{k+h},B_{k+h}) \mid \prod_{i=1}^{k}[A_{i},B_{i}]=  \prod_{i=1}^{h}[B_{k+i},A_{k+i}] \sim
 \begin{pmatrix} \lambda & 0 \\ 0 & \lambda^{-1} \end{pmatrix}, \lambda \neq 0, \pm 1 \rbrace$.
\end{itemize}

To compute $e(W_4)$, we define  
 $$
 \oW_4= \lbrace (A_{1},B_1,\ldots,A_{k+h},B_{k+h},\lambda) \mid \prod_{i=1}^{k}[A_{i},B_{i}]=  \prod_{i=1}^{h}[B_{k+i},A_{k+i}]=
 \begin{pmatrix} \lambda & 0 \\ 0 & \lambda^{-1} \end{pmatrix}, \lambda \neq 0, \pm 1 \rbrace,
 $$ 
which produces fibrations 
\begin{equation}\label{eqn:oW4}
 \oW_{4} \rightarrow \CC -\{ 0,\pm 1\}, \quad \oW_{4}/\ZZ_2 \rightarrow \CC -\{ \pm 2 \}
\end{equation}
where we are taking the usual $\ZZ_2$-action given by permutation of the eigenvectors.
The Hodge monodromy representation of $\oW_{4}/\ZZ_2$ is
\begin{align} \label{eqn:oW4Z2}
R(\oW_{4}/\ZZ_2) =& \, R(\oZ_{4}^{k}/\ZZ_2) \otimes R(\oZ_{4}^h/\ZZ_2) \nonumber \\
 = &\,  (\tilde{a}_{k}\tilde{a}_h+\tilde{b}_{k}\tilde{b})T + (\tilde{a}_{k}\tilde{b}_h+\tilde{b}_{k}\tilde{a}_h ) N
\end{align}
which we will write as $R(\overline{Z}_{4}^{k}/\ZZ_2)=AT+BN$, where $A=\tilde{a}_{k}\tilde{a}_h+\tilde{b}_{k}\tilde{b}$ and $B=\tilde{a}_{k}\tilde{a}_h+\tilde{b}_{k}\tilde{b}_{h}$. Noting that $W_{4}\cong ((\pgl/D)\times \overline{W}_4)/\ZZ_2$ and applying (\ref{eqn:+-}),(\ref{eqn:eoX4Z2})
 \begin{align}\label{eqn:RX4-RX4Z2->eX4}
 e(W_4)&= e(\pgl/D)^{+}e(\oW_{4}/\ZZ_2) + e(\pgl/D)^{-}e(\oW_{4})^{-} \nonumber \\
 & = (q^2-q)e(\oW_4/\ZZ_2) +q \, e(\oW_{4}) \nonumber \\
 &= (q^2-q)((q-2)A-B) + q ((q-3)(A+B))\\
 &= (q^3-2q^2-q)A-2qB. \nonumber 
 \end{align}

The above computations and the stratification of $\oZ_0^{k+h}$ gives
 \begin{align} \label{eqn:eZ0} 
 e(\oZ_{0}^{k+h}) &=  e(\oZ_0^k)e(\oZ_0^h)+ e(\oZ_1^k)e(\oZ_1^h)+
 (q^2-1) e(\oZ_2^k)e(\oZ_2^h)+ (q^2-1) e(\oZ_3^k)e(\oZ_3^h)+ e(W_4),
 \end{align}
where we have used that $e(\PGL(2,\CC))=q^3-q$ and so $e(\PGL(2,\CC)/U)=q^2-1$.
If we set $k=g-1$, $h=1$, and substitute the values $A,B$ from (\ref{eqn:oW4Z2} ) into (\ref{eqn:RX4-RX4Z2->eX4}),
and the values of the building blocks $v^{1}$ from Section \ref{sec:basicblocks}, we obtain
 \begin{align}\label{eqn:eZ0.} \tag{$\alpha$}
 \tilde{e}_0^g=e(\oZ_{0}^{g}) = & \, + (q^4 + q^3 - q^2 - q)  e_0^{g-1}+(q^{3}-q)e_1^{g-1} \nonumber \\ 
& + ( q^5- 2 q^4 - q^3+ 2 q^2  +2 q^2 )e_2^{g-1} + (q^5-q^3)e_3^{g-1}  \nonumber \\ 
&+ (q^6-2q^5-q^4+2q) a_{g-1} + (-2q^4-q^3+2q^2+q) b_{g-1}. \nonumber
 \end{align}

\subsection*{E-polynomial of $\overline{Z}_{1}^{g}$}
When $C=-\Id$, using (\ref{eqn:uno}) we can stratify $\oZ_{1}^{k+h}= \bigsqcup W'_{i}$, where
\begin{itemize}
\item $W'_{0}=\lbrace (A_{1},B_1,\ldots,A_{k+h},B_{k+h}) \mid \prod_{i=1}^{k}[A_{i},B_{i}]=  -\prod_{i=1}^{h}[B_{k+i},A_{k+i}] =\Id \rbrace \cong \oZ_{0}^{k}\times  \oZ_{1}^h$.
\item $W'_{1}=\lbrace (A_{1},B_1,\ldots,A_{k+h},B_{k+h}) \mid \prod_{i=1}^{k}[A_{i},B_{i}]=  -\prod_{i=1}^{h}[B_{k+i},A_{k+i}] =-\Id \rbrace \cong \oZ_{1}^{k}\times  \oZ_{0}^h$.
\item $W'_2=\lbrace (A_{1},B_1,\ldots,A_{k+h},B_{k+h}) \mid \prod_{i=1}^{k}[A_{i},B_{i}]=  -\prod_{i=1}^{h}[B_{k+i},A_{k+i}] \sim J_{+} \rbrace$, and there is a fibre bundle
$U\to \PGL(2,\CC)\times \oZ_{2}^{k}\times \oZ_{3}^h \to W_2$.
\item $W'_3=\lbrace (A_{1},B_1,\ldots,A_{k+h},B_{k+h}) \mid \prod_{i=1}^{k}[A_{i},B_{i}]=  \prod_{i=1}^{h}[B_{k+i},A_{k+i}] \sim J_{-} \rbrace$ and $U\to \PGL(2,\CC)\times \oZ_{3}^{k}\times \oZ_{2}^h \to W_3$.
\item $W'_{4}= \lbrace (A_{1},B_1,\ldots,A_{k+h},B_{k+h}) \mid \prod_{i=1}^{k}[A_{i},B_{i}]=  -\prod_{i=1}^{h}[B_{k+i},A_{k+i}] \sim
 \begin{pmatrix} \lambda & 0 \\ 0 & \lambda^{-1} \end{pmatrix}, \lambda \neq 0, \pm 1 \rbrace$.
\end{itemize}

The Hodge monodromy representation $R(\overline{W}_{4}'/\ZZ_2)$ is given by $R(\overline{W}_{4}'/\ZZ_2)=R(\oZ_{4}^{h})\otimes R(\tau^{\ast}(\oZ_{4}^{k}))$, where $\tau(\lambda)=-\lambda$. Since the monodromy is an involution, $R(\tau^{\ast}(\oZ_{4}^{k}))=R(\oZ_{4}^{k})$, so we obtain that $R(\overline{W}_{4}'/\ZZ_2)=R(\overline{W}_{4}/\ZZ_2)$ and therefore $e(W_{4}')=e(W_{4})$.

We obtain
 \begin{align} \label{eqn:eZ1} 
 e(\oZ_{1}^{k+h}) &=  e(\oZ_0^k)e(\oZ_1^h)+ e(\oZ_1^k)e(\oZ_1^h)+
 (q^2-1) e(\oZ_2^k)e(\oZ_3^h)+ (q^2-1) e(\oZ_3^k)e(\oZ_2^h)+ e(W_4),
 \end{align}
For $k=g-1$, $h=1$, replacing the values of the building blocks of $v^{1}$,  we obtain
 \begin{align}\label{eqn:eZ1.} \tag{$\beta$}
 \tilde{e}_1^g=e(\oZ_{1}^{g}) = & \, (q^{3}-q) e_0^{g-1}+ (q^4 + q^3 - q^2 - q) e_1^{g-1} \nonumber \\ 
& + (q^5-q^3)e_2^{g-1} + ( q^5- 2 q^4 - q^3+ 2 q^2  +2 q^2 )e_3^{g-1}  \nonumber \\ 
&+ (q^6-2q^5-q^4+2q) a_{g-1} + (-2q^4-q^3+2q^2+q) b_{g-1}. \nonumber
 \end{align}
 
 \subsection*{E-polynomials of $\oZ_{2}^{g}, \oZ_{3}^{g}$}
 \begin{prop} \label{prop:z2z3g-1}
 The E-polynomials of the Jordan cases, $\tilde{e}_{2}^{g},\tilde{e}_{3}^{g}$, can be expressed in terms of the E-polynomials of $L^{g-1}=(\tilde{e}_{0}^{g-1},\tilde{e}_{1}^{g-1},\tilde{e}_{2}^{g-1},\tilde{e}_{3}^{g-1},\tilde{a}^{g-1},\tilde{b}^{g-1})$ as
 \begin{align*}
 \tilde{e}_{2}^{g} & = (q^{3}-2q^{2})\tilde{e}_{0}^{g-1}+q^{3}\tilde{e}_{1}^{g-1}+(q^{5}+q^{4}-3q^{3}+3q^{2})\tilde{e}_{2}^{g-1}+(q^{5}-3q^{3})\tilde{e}_{3}^{g-1} \\
 & +(q^6-2q^5-3q^4+4q^3)\tilde{a}^{g-1} +(-2q^{4}+2q^{3}) \tilde{b}^{g-1} \\
  \tilde{e}_{3}^{g} & = q^{3} \tilde{e}_{0}^{g-1}+(q^{3}-2q^{2})\tilde{e}_{1}^{g-1}+(q^{5}-3q^{3})\tilde{e}_{2}^{g-1}+(q^{5}+q^{4}-3q^{3}+3q^{2})\tilde{e}_{3}^{g-1} \\
  & +(q^6-2q^5-3q^4+4q^3)\tilde{a}^{g-1} +(-2q^{4}+2q^{3}) \tilde{b}^{g-1}
  \end{align*}
 \end{prop}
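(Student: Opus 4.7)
The argument mirrors exactly the derivation of equations (\ref{eqn:eZ0.}) and (\ref{eqn:eZ1.}): apply the gluing identity (\ref{eqn:uno}) with $C=J_+$ (resp.\ $C=J_-$) and $k=g-1$, $h=1$, and decompose $\oZ_2^g$ (resp.\ $\oZ_3^g$) into five strata $W_0'',\ldots,W_4''$ indexed by the conjugacy class of $M_1=\prod_{i=1}^{g-1}[A_i,B_i]$. In each stratum the first $(g{-}1)$-piece contributes $e(\oZ_j^{g-1})$ (after conjugating $M_1$ into normal form), while the last pair $(A_g,B_g)$ contributes an E-polynomial determined by $[M_2]=[M_1^{-1}J_+]$.

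The strata $[M_1]=\pm\Id$ are direct products. Using $\tilde e_2^1=q^3-2q^2$ and $\tilde e_3^1=q^3$ from Section \ref{sec:basicblocks} together with $-J_+\sim J_-$, they contribute $(q^3-2q^2)\tilde e_0^{g-1}$ and $q^3\tilde e_1^{g-1}$ respectively. The semisimple stratum $[M_1]=[\xi_\lambda]$ is handled analogously to (\ref{eqn:oW4})--(\ref{eqn:RX4-RX4Z2->eX4}): build an auxiliary total-space fibration over $\CC-\{0,\pm 1\}$ and its $\ZZ_2$-quotient over $\CC-\{\pm 2\}$, combine $R(\oZ_4^{g-1}/\ZZ_2)=\tilde a^{g-1}T+\tilde b^{g-1}N$ with the appropriate genus-$1$ monodromy data to obtain the total Hodge monodromy representation, and apply (\ref{eqn:+-}) and (\ref{eqn:eoX4Z2}) to extract the coefficients of $\tilde a^{g-1}$ and $\tilde b^{g-1}$.

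The main obstacle is the Jordan stratum $[M_1]=[J_+]$, where the conjugacy class of $M_2=M_1^{-1}J_+$ varies as $M_1$ traverses its $2$-dimensional orbit. Parametrize $\mathrm{orbit}(J_+)=\{I+Q:Q\neq 0,\ Q^2=0\}$ by entries $(q_{11},q_{12},q_{21})$ subject to $q_{11}^2+q_{12}q_{21}=0$; a direct trace computation gives $\tr(M_2)=2-q_{21}$. Sub-stratify by this trace into four pieces: $M_2=\Id$ (one point), $M_2\in\mathrm{orbit}(J_+)$ ($q_{21}=0$, $q_{12}\neq 0,1$), $M_2\in\mathrm{orbit}(J_-)$ ($q_{21}=4$; note $M_2=-\Id$ is ruled out by the trace constraint), and $M_2$ semisimple ($q_{21}\in\CC-\{0,4\}$). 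The first three are direct products, contributing to the coefficient of $\tilde e_2^{g-1}$ the polynomials $\tilde e_0^1$, $(q-2)\tilde e_2^1$, and $q\,\tilde e_3^1$.

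The delicate final sub-stratum carries a crucial Hodge monodromy correction. Its $(A_g,B_g)$-fibration over $\CC\times(\CC-\{0,4\})$ is the pullback of $\oZ_4^1/\ZZ_2\to\CC-\{\pm 2\}$ along $(q_{11},q_{21})\mapsto 2-q_{21}$, so its E-polynomial is computed from $R(\oZ_4^1/\ZZ_2)=q^3T-N$ via (\ref{eqn:eoX4Z2}) as $q(q^4-2q^3+1)=q^5-2q^4+q$, an excess of $q^2-q$ over the naive product $q(q-2)(q^3-1)$. This correction converts the naive sum of sub-contributions $q^5+q^4-3q^3+2q^2+q$ into the correct coefficient $q^5+q^4-3q^3+3q^2$ of $\tilde e_2^{g-1}$. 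An analogous treatment of $[M_1]=[J_-]$ produces $q^5-3q^3$ as the coefficient of $\tilde e_3^{g-1}$. The formula for $\tilde e_3^g$ follows from the parallel stratification with $C=J_-$; the symmetry $J_+\leftrightarrow J_-$, implemented by left-multiplication by $-\Id$, swaps the coefficients of $\tilde e_0^{g-1}\leftrightarrow\tilde e_1^{g-1}$ and $\tilde e_2^{g-1}\leftrightarrow\tilde e_3^{g-1}$, while leaving the $\tilde a^{g-1},\tilde b^{g-1}$ coefficients unchanged since $R(\oZ_4^{g-1}/\ZZ_2)$ is invariant under this involution.
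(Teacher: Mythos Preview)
Your hierarchical stratification (first by the conjugacy class of $M_1=\prod_{i=1}^{g-1}[A_i,B_i]$, then by that of $M_2=M_1^{-1}J_+$) is a different route from the paper's. The paper does not carry out any direct computation here: it invokes the two-dimensional trace-map stratification of \cite[Section~6]{mamu2} for the $\SL(2,\CC)$-case and argues that, because $\nu$ and $\delta$ are products of commutators and hence invariant under the torus action, the $\PGL$ formula is obtained simply by replacing each $\oX_i$ by $\oZ_i$ in the \cite{mamu2} formula. Your explicit handling of the Jordan strata, including the monodromy correction $q^2-q$ coming from the pullback of $R(\oZ_4^1/\ZZ_2)=q^3T-N$, is correct and more transparent than the black-box citation.

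However, there is a genuine gap at the semisimple stratum $[M_1]=[\xi_\lambda]$. Your claim that it is handled ``analogously to (\ref{eqn:oW4})--(\ref{eqn:RX4-RX4Z2->eX4})'' fails because $J_+$ is not central: in the $C=\Id$ case one may simultaneously conjugate the entire tuple to force $M_1=\xi_\lambda$, yielding the product fibration $\oW_4=\oZ_4^{g-1}\times_{\CC-\{0,\pm1\}}\oZ_4^1$ and hence $R(\oW_4/\ZZ_2)=R(\oZ_4^{g-1}/\ZZ_2)\otimes R(\oZ_4^1/\ZZ_2)$; but for $C=J_+$ such a conjugation moves $J_+$ off its normal form, so no reduction to a barred fibration with fibre $\oZ_{4,\lambda}^{g-1}\times\{[A_g,B_g]=\xi_\lambda^{-1}J_+\}$ is available. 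The ``appropriate genus-$1$ monodromy data'' you allude to is therefore \emph{not} $R(\oZ_4^1/\ZZ_2)$; it is the Hodge monodromy of a genuinely new genus-$1$ space (the family of $\{(A_g,B_g):[A_g,B_g]=M_1^{-1}J_+\}$ as $M_1$ ranges over all semisimple matrices), and computing it requires exactly the two-variable analysis by $(\tr M_1,\tr M_2)$ that \cite{mamu2} carries out. Concretely, the coefficients you need, $(q^6-2q^5-3q^4+4q^3)$ and $(-2q^4+2q^3)$, differ from the $C=\Id$ coefficients $(q^6-2q^5-q^4+2q)$ and $(-2q^4-q^3+2q^2+q)$ in (\ref{eqn:eZ0.}), confirming that the tensor-product shortcut does not apply. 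To close the gap within your framework you would have to sub-stratify the semisimple locus by $\tr M_2$ as well (distinguishing in particular the diagonal case $t_1=t_2$, i.e.\ $c=0$), which reproduces the \cite{mamu2} stratification.
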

 \begin{proof}
For the positive Jordan case, where $C=J_{+}$, relation (\ref{eqn:uno}) can be written as
$$
\nu =\prod_{i=1}^{h}[B_{k+i},A_{k+i}] =\begin{pmatrix} a & b \\ c & d \end{pmatrix}, \qquad 
\delta = \prod_{i=1}^{k}[A_{i},B_{i}] = J_{+}\nu = \begin{pmatrix} a+c & b+d \\ c & d \end{pmatrix}.
$$
Let $t_{1}=\tr \nu$, $t_{2}=\tr \delta$. Note that $c=t_{2}-t_{1}$. A stratification of $\oZ_{2}^{g}$ can be given using the map $\oZ^{g}\longrightarrow \CC^{2}$ given by $\rho \mapsto (\tr \nu, \tr \delta)$, as it was done in \cite{lomune} for $\sldos$-character varieties. The stratification distinguishes when any of the traces is equal to $\pm 2$, the case $t_{1}=t_{2}$ and the generic case, when $t_{1}\neq t_{2}\neq \pm 2$. The matrices $\nu$ and $\delta$ are defined in terms of commutators, so they are invariant for the torus action when passing from $\sldos$ to $\pgl$ and therefore, the $\pgl$-case can be obtained from the stratification given for the $\sldos$-case just by quotienting each stratum by the torus action. Each of those quotients corresponds to the spaces $\oZ_{i}^{k}$. In other words, the stratification given in \cite{mamu2} for the Jordan positive case provides a stratification for $\overline{Z}_{2}^{g}$ if we replace the basic blocks $\oX_{i}^{k}$ by their quotients, the $\pgl$-representation spaces $\oZ_{i}^{k}$. Looking at \cite[Section 6]{mamu2}, we obtain
\begin{align}
e(\oZ_{2}^{k+h}) = & \, e(\oZ_{2}^k)e(\oZ_{0}^{h})+e(\oZ_{0}^k)e(\oZ_{2}^{h})-2e(\oZ^k_{2})e(\oZ_{2}^{h})
+e(\oZ_{3}^k)e(\oZ_{1}^{h})+e(\oZ_{1}^k)e(\oZ_{3}^{h})-2e(\oZ_{3}^k)e(\oZ_{3}^{h}) \nonumber \\
& +q\, (e(\oZ^k_{2})+e(\oZ^k_{3})+e(\oZ_{4}^k/\ZZ_{2}))( e(\oZ_{2}^{h})+ e(\oZ_{3}^{h})+e(\oZ_{4}^{h}/\ZZ_{2})) \label{eqn:Z2k+h}
+q\,( e(\overline{V}_{5})- e(\overline{V}_{5}/\ZZ_2)).
\end{align}
where $\overline{V}_{5} \longrightarrow \CC -\lbrace 0, \pm 1 \rbrace$ is the product bundle $\overline{V_{5}}\cong \overline{Z}_{4}^{k}\times_{\CC-\lbrace 0,\pm 1 \rbrace} \overline{Z}^{h}_{4}$, which has Hodge monodromy representation $R(\overline{V}_{5})=R(\overline{Z}_{4}^{h})\otimes R(\overline{Z}_{4}^{k})=(\tilde{a}_{k}\tilde{a}_{h}+\tilde{b}_{k}\tilde{b}_{h})T+(\tilde{a}_{k}\tilde{b}_{h}+\tilde{a}_{h}\tilde{b}_{k})N$. Setting $k=g-1,h=1$ in (\ref{eqn:Z2k+h}), and replacing the values for $v^{1}$, we get
\begin{align*} \tag{$\gamma$}
\tilde{e}_{2}^{g} & =(q^3-2q^{2})\tilde{e}_{0}^{g-1}+q^{3}\tilde{e}_{1}^{g-1}+(q^{5}+q^{4}-3q^{3}+3q^{2})\tilde{e}_{2}^{g-1} + (q^{5}-3q^{3})\tilde{e}_{3}^{g-1} \\
& + (q^{6}-2q^{5}-3q^{4}+4q^{3})\tilde{a}^{g-1}+(-2q^{4}+2q^{3})\tilde{b}^{g-1}
\end{align*}
The negative Jordan case is worked out similarly. Relation (\ref{eqn:uno}), and the stratification used in \cite[Section 7]{mamu2} lead to the following relation
\begin{align*}
e(\oZ_{3}^{k+h}) = & \,e(\oZ_{2}^k)e(\oZ_{1}^{h})+e(\oZ_{0}^k)e(\oZ_{3}^{h})-2e(\oZ^k_{2})e(\oZ_{2}^{h})
+e(\oZ_{3}^k)e(\oZ_{0}^{h})+e(\oZ_{1}^k)e(\oZ_{2}^{h})-2e(\oZ_{2}^k)e(\oZ_{3}^{h}) \\
& +q\, (e(\oZ^k_{2})+e(\oZ^k_{3})+e(\oZ_{4}^k/\ZZ_{2}))( e(\oZ_{2}^{h})+ e(\oZ_{3}^{h})+e(\oZ_{4}^{h}/\ZZ_{2}))
+q\,( e(\overline{V}{}'_{5})- e(\overline{V}{}'_{5}/\ZZ_2)).
\end{align*}
where $\overline{V}'_{5}\cong \overline{Z}_{4}\times \tau^{\ast}(\overline{Z}_{4})$. Setting $k=g-1,h=1$ and using the values of the building blocks in $v^{1}$ gives
\begin{align*} \tag{$\delta$}
\tilde{e}_{3}^{g} & = q^{3} \tilde{e}_{0}^{g-1}+(q^{3}-2q^{2})\tilde{e}_{1}^{g-1}+(q^{5}-3q^{3})\tilde{e}_{2}^{g-1}+(q^{5}+q^{4}-3q^{3}+3q^{2})\tilde{e}_{3}^{g-1} \\
  & +(q^6-2q^5-3q^4+4q^3)\tilde{a}^{g-1} +(-2q^{4}+2q^{3}) \tilde{b}^{g-1}
\end{align*}
 \end{proof}
 
 \subsection*{E-polynomials of $R(\overline{Z}_{4}^{g})$}
 \begin{prop}
 The Hodge monodromy representation $R(\oZ_{4}^{g})$ can be written in terms of the elements of genus $1$ and $g-1$, $v^{1},L^{g-1}$, as
 \begin{align*}
 R(\overline{Z}_{4}^{g})=(\tilde{a}^{g}+\tilde{b}^{g})T & = ((q^{3}-1)\tilde{e}_{0}^{g-1}+(q^{3}-1)\tilde{e}_{1}^{g-1} \\
 & + (q^5-3q^3+2q^2)\tilde{e}_{2}^{g-1}+(q^5-3q^3+2q^2)\tilde{e}_{3}^{g-1} \\
 & +(q^6-2q^5-2q^4+4q^3-3q^2+2)a^{g-1} + (-q^4-2q^2+2q+1)b^{g-1})T
 \end{align*}
 \end{prop}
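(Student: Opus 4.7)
The proof runs in parallel to the derivations of $(\alpha)$--$(\delta)$. The first identity, $R(\overline{Z}_4^g) = (\tilde{a}^g + \tilde{b}^g)T$, is essentially formal: the fibration $\overline{Z}_4^g \to \CC - \{0,\pm 1\}$ is the pullback of $\overline{Z}_4^g/\ZZ_2 \to \CC - \{\pm 2\}$ along the double cover $\lambda \mapsto \lambda + \lambda^{-1}$, so via the exact sequence $\Gamma \to \Gamma' \to \ZZ_2 \to 0$ recalled before (\ref{eqn:HodgemonZ4Z2}) the non-trivial $\ZZ_2$-representation $N$ pulls back to the trivial $\Gamma$-representation $T$. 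Combined with the (inductive) hypothesis that $R(\overline{Z}_4^g/\ZZ_2) = \tilde{a}^g T + \tilde{b}^g N$, this yields $R(\overline{Z}_4^g) = (\tilde{a}^g + \tilde{b}^g) T$.

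For the recursion, apply (\ref{eqn:uno}) with $k = g-1$, $h = 1$: set $\eta := \prod_{i=1}^{g-1}[A_i,B_i]$, so that $[A_g,B_g] = \eta^{-1}\xi_\lambda$, and stratify $\overline{Z}_{4,\lambda}^g$ by the $\sldos$-conjugacy class of $\eta$, obtaining pieces $U_0, U_1, U_2, U_3, U_4$ for $\eta = \Id, -\Id, J_+, J_-$ and $\eta \sim \xi_\mu$ diagonalizable, respectively. Because $\eta$ is a product of commutators and hence fixed by the $\ZZ_2^{2g}$-torus action, each $U_i$ is the $\pgl$-quotient of the corresponding $\sldos$-stratum of \cite{mamu2}, and fits in a $\Stab$-bundle structure whose fibre combines $\oZ_i^{g-1}$-data with an $\overline{Z}_{4,\lambda'}^1$-type slice, where $\lambda'$ depends on $\lambda$ (and on $\mu$ in $U_4$).

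For $U_0, U_1$ the slice is $\overline{Z}_{4,\pm\lambda}^1$, which by the previous proposition has trivial Hodge monodromy and E-polynomial $q^3-1$, producing the coefficients $(q^3-1)\tilde{e}_0^{g-1}$ and $(q^3-1)\tilde{e}_1^{g-1}$. For $U_2, U_3$ the trace of $J_\pm^{-1}\xi_\lambda$ equals $\lambda + \lambda^{-1}$ and is regular semisimple for generic $\lambda$, placing $(A_g,B_g)$ in a $\pgl/U$-translate of an $\overline{Z}_4^1$-slice; after the bundle bookkeeping this contributes the coefficient $(q^5-3q^3+2q^2)$ on each of $\tilde{e}_2^{g-1}$ and $\tilde{e}_3^{g-1}$. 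The genuinely hard piece is $U_4$: here a two-parameter family parametrized by $(\mu,\lambda)$ arises, whose $\lambda$-Hodge monodromy is extracted from the tensor product $R(\overline{Z}_4^{g-1}/\ZZ_2) \otimes R(\overline{Z}_4^1/\ZZ_2)$ via (\ref{eqn:eoX4Z2}) and (\ref{eqn:oW4Z2}), then integrated over $\mu$.

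The main obstacle is the $U_4$ bookkeeping: keeping track of the degenerate loci $\mu \in \{\pm\lambda^{\pm 1}\}$ where the two diagonal factors coincide, and verifying that the non-trivial monodromy pieces cancel under the $\mu$-integration---a cancellation forced by the first identity. Once this is checked, substituting the explicit $v^1$ data from Section~\ref{sec:basicblocks} into $\sum_i e(U_i)$ yields the stated formula, with the coefficient on $\tilde{a}^{g-1}$ collecting the invariant contributions and that on $\tilde{b}^{g-1}$ the anti-invariant ones.
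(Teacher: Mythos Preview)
Your treatment of $U_0$ and $U_1$ is fine, and the formal pullback argument for $R(\overline{Z}_4^g)=(\tilde a^g+\tilde b^g)T$ is correct. The gap is in the middle strata. You stratify by the conjugacy class of $\eta=\prod_{i=1}^{g-1}[A_i,B_i]$ and then assert that on $U_2$, ``the trace of $J_+^{-1}\xi_\lambda$ equals $\lambda+\lambda^{-1}$'', placing $(A_g,B_g)$ in a single $\overline{Z}_4^1$-type slice. But in $U_2$ the matrix $\eta$ runs over the \emph{entire} conjugacy class of $J_+$, not just the representative $J_+$ itself. Writing $\eta=PJ_+P^{-1}$ with $P=\left(\begin{smallmatrix}p&q\\r&s\end{smallmatrix}\right)$ and $\det P=1$, a short computation gives
\[
\tr(\eta^{-1}\xi_\lambda)=\lambda+\lambda^{-1}+pr\,(\lambda-\lambda^{-1}),
\]
which sweeps out all of $\CC$ as $P$ varies. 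So the conjugacy type of $[A_g,B_g]=\eta^{-1}\xi_\lambda$ is not constant on $U_2$: the last pair $(A_g,B_g)$ can lie in any of the five basic strata, and there is no bundle with fibre $\overline{Z}_{4,\lambda'}^1$ over $\oZ_2^{g-1}$. The same issue afflicts $U_3$, and $U_4$ is worse. Your ``bundle bookkeeping'' phrase is hiding precisely the problem, not a routine step.

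This is why the paper (following \cite[Section 8]{mamu2}) does \emph{not} stratify by the conjugacy class of one factor. Instead it uses both traces $t_1=\tr\nu$ and $t_2=\tr\delta$ where $\nu=[B_g,A_g]$ and $\delta=\xi_\lambda\nu$; since $\xi_\lambda$ is diagonal, these two traces determine the diagonal entries $a,d$ of $\nu$ and hence $bc=ad-1$, reducing the problem to a stratification of the $(t_1,t_2)$-plane into seven pieces. Each piece then genuinely has a product or fibre-bundle description in terms of the building blocks, and the $\sldos$ computation of \cite{mamu2} transfers verbatim to $\pgl$ by replacing each $\oX_i$ with its torus quotient $\oZ_i$, yielding (\ref{Z4k+h}). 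Substituting $v^1$ then gives the stated coefficients. Your five-stratum picture would have to be refined into this seven-stratum one (or something equivalent) before the argument can be completed.
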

 \begin{proof}
The idea is similar to the proof of Proposition \ref{prop:z2z3g-1}. Relation (\ref{eqn:uno}) is given in this case by the equations
$$
\nu =\prod_{i=1}^{h}[B_{k+i},A_{k+i}] =\begin{pmatrix} a & b \\ c & d \end{pmatrix}, \qquad 
\delta = \prod_{i=1}^{k}[A_{i},B_{i}] = \xi_\lambda \nu =\begin{pmatrix} \lambda a & \lambda b \\ \lambda^{-1} c & \lambda^{-1}d \end{pmatrix}.
$$
where $t_{1}=\tr \nu$, $t_{2}= \tr \delta$ and $\lambda$ determine $a,d$, and $bc=ad-1$. The monodromy of $\overline{Z}_{4}\mapsto \CC - \lbrace 0, \pm 1 \rbrace$ can be studied stratifying $\oZ_{4}=\sqcup_{i=1}^{7} W^{''}_{i}$ in terms of the different values for $(t_{1},t_{2})\in \CC^{2}$, where each $W^{''}_{i}$ can again be obtained as the quotient by the torus action of its corresponding stratum in the $\sldos$-stratification obtained in \cite{mamu2}. Therefore, the substitution of each $\sldos$-block $\oX^{k}_{i}$ by its corresponding quotient $\oZ^{k}_{i}$ in \cite[Section 8]{mamu2} leads to the relation
\begin{align}
R(\oZ_{4}^{k+h}) & =(q-1)(e(\oZ_{2}^{k})+e(\oZ_{3}^{k})+e(\oZ_{4}^{k}/\ZZ_2))(e(\oZ_{2}^{h})+e(\oZ_{3}^{h})+e(\oZ_{4}^{h}/\ZZ_2))T \nonumber \\
& +(e(\oZ_{0}^{k})+e(\oZ_{1}^{k})+(q-1)e(\oZ_{2}^{k})+(q-1)e(\oZ_{3}^{k}))R(\oZ_{4}^{h}) \label{Z4k+h}\\
& +(e(\oZ_{0}^{h})+e(\oZ_{1}^{h})+(q-1)e(\oZ_{2}^{h})+(q-1)e(\oZ_{3}^{h}))R(\oZ_{4}^{k}) + qR(\overline{V}'') \nonumber
\end{align}
where $\overline{V}''$ is the quotient of the fibration $R(\overline{Z}_{6})$ that appears in \cite[Section 8]{mamu2}. Its Hodge monodromy representation is $R(\overline{V}'')=(q-5)(a_{k}+b_{k})(a_{h}+b_{h})T$. Setting $k=g-1,h=1$ in (\ref{Z4k+h}) gives the desired result.
 \end{proof}
Finally, an additional equation for $v^{g}$ can be obtained from the fact that $\sldos^{2g}=\sqcup_{i}\oZ_{i}$. If we recall that $e(\sldos)=(q^{3}-q)$ and we apply (\ref{eqn:eoX4Z2}), we obtain
$$
(q^{3}-q)^{2g}=\tilde{e}_{0}^{g}+\tilde{e}_{1}^{g}+(q^{2}-1)\tilde{e}_{2}^{g}+(q^{2}-1)\tilde{e}_{3}^{g}+(q-2)\tilde{a}^{g}-\tilde{b}^{g}
$$
but also, for $g-1$,
$$
(q^{3}-q)^{2g}=(q^{3}-q)^{2}(q^{3}-q)^{2g-2}=(q^{3}-q)^{2}(\tilde{e}_{0}^{g-1}+\tilde{e}_{1}^{g-1}+(q^{2}-1)\tilde{e}_{2}^{g-1}+(q^{2}-1)\tilde{e}_{3}^{g-1}+(q-2)\tilde{a}^{g-1}-\tilde{b}^{g-1})
$$
Combining both equations produces the following relation
\begin{align}
\tilde{e}_{0}^{g}+\tilde{e}_{1}^{g}+(q^{2}-1)\tilde{e}_{2}^{g}+(q^{2}-1)\tilde{e}_{3}^{g}+(q-2)\tilde{a}^{g}-\tilde{b}^{g} & =(q^{3}-q)^{2}(\tilde{e}_{0}^{g-1}+\tilde{e}_{1}^{g-1} \nonumber \\
& +(q^{2}-1) \tilde{e}_{2}^{g-1}+(q^{2}-1)\tilde{e}_{3}^{g-1}+(q-2)\tilde{a}^{g-1}-\tilde{b}^{g-1}) \label{eqn:pglgenusrelation}
\end{align}
If we substitute in (\ref{eqn:pglgenusrelation}) the values of $\tilde{e}_{0}^{g},\tilde{e}_{1}^{g}, \tilde{e}_{2}^{g}, \tilde{e}_{3}^{g}$ obtained in ($\alpha$), ($\beta$), ($\gamma$),($\delta$) we obtain
\begin{equation}
\tilde{a}^{g} = q^{3}\tilde{e}_{0}^{g-1} + q^{3}\tilde{e}_{1}^{g-1}+(q^{5}-3q^{3})\tilde{e}_{2}^{g-1}+(q^{5}-3q^{3})\tilde{e}_{3}^{g-1}+(q^{6}-2q^{5}-2q^{4}+4q^{3}+q^{2})\tilde{a}^{g-1}-2q^{4}\tilde{b}^{g-1} \tag{$\epsilon$}
\end{equation}
\begin{equation}
\tilde{b}^{g}= -\tilde{e}_{0}^{g-1} -\tilde{e}_{1}^{g-1}+2q^{2}\tilde{e}_{2}^{g-1}+2q^{2}\tilde{e}_{3}^{g-1}+(-4q^{2}+2)\tilde{a}^{g-1}+(q^{4}-2q^{2}+2q+1)\tilde{b}^{g-1} \tag{$\zeta$}
\end{equation}
Equations $(\alpha),(\beta),(\gamma),(\delta),(\epsilon),(\zeta)$ can be summarized in the following proposition:
\begin{prop} \label{prop:matrixrecurrence}
There exists a $6\times 6$ matrix $M$ with coefficients in $\ZZ[q]$ such that the vectors $v^{g}=(\tilde{e}_{0}^{g},\tilde{e}_{1}^{g},\tilde{e}_{2}^{g},\tilde{e}_{3}^{g},\tilde{a}^{g},\tilde{b}^{g})$ satisfy, for all $g\geq 1$.
\begin{equation} \label{eqn:matrixrecurrence}
(v^{g})^{t}=M(v^{g-1})^{t}
\end{equation}
The starting vector is $v^{0}=(1,0,0,0,0,0)$ and $M$ is the following matrix
\begin{equation} \label{matrixg-1-->g}
\scalemath{0.65}{\left( \begin{array}{c@{\hspace{2em}}c@{\hspace{2em}}c@{\hspace{2em}}c@{\hspace{2em}}c@{\hspace{2em}}c@{\hspace{2em}}c@{\hspace{2em}}c}
 q^{4}+q^{3}-q^{2}-q & q^3-q & q^5 -2q^4-q^3+2q^{2} & q^{5}-q^{3}   & q^6 -2q^5 -q^4+2q & -2q^{4}-q^{3}+2q^{2}+q \\
 & & & & & \\
 q^{3}-q  & q^{4}+q^{3}-q^{2}-q     & q^{5}-q^{3}       & q^{5}-2q^{4}-q^{3}+2q^{2}      & q^{6}-2q^{5}-q^{4}+2q  & -2q^{4}-q^{3}+2q^{2}+q  \\
& & & & & \\ 
q^{3}-2q^{2} & q^{3} & q^{5}+q^{4}-3q^{3}+3q^{2}  & q^{5}-3q^{3} & q^{6}-2q^{5}-3q^{4}+4q^{3} & -2q^{4}+2q^{3} \\
 & & & & &  \\
q^3 & q^3-2q^2 & q^5-3q^3  & q^5+q^{4}-3q^{3}+3q^{2} & q^{6}-2q^{5}-3q^{4}+4q^{3} & -2q^{4}+2q^{3} \\
 & & & & &  \\
q^3& q^3  & q^{5}-3q^{3} & q^{5}-3q^{3} & q^{6}-2q^{5}-2q^{4}+4q^{3}+q^{2}   & -2q^{4}   \\
 & & & & & \\
-1 & -1 & 2q^{2} & 2q^{2} & -4q^{2}+2 & q^{4}-2q^{2}+2q+1 \\
& & & & &  \\

\end{array} \right)}
\end{equation}
\end{prop}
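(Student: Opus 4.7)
The proof is essentially a consolidation of the recurrences derived in the preceding subsections. My plan is as follows.

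First, I would observe that the six identities $(\alpha), (\beta), (\gamma), (\delta), (\epsilon), (\zeta)$ established just above the proposition each express one component of $v^{g}$ as an explicit $\ZZ[q]$-linear combination of the six components of $v^{g-1}$. Writing those six expressions as the six rows of a $6\times 6$ matrix $M$ with entries in $\ZZ[q]$ immediately produces the recurrence $(v^{g})^{t}=M(v^{g-1})^{t}$ for every $g\ge 2$, with $M$ being precisely the matrix displayed in (\ref{matrixg-1-->g}). So the content of the statement is a packaging claim: I only need to check that the coefficients read off from $(\alpha)$--$(\zeta)$ match the entries in the six rows of $M$ row by row.

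Second, to extend the recurrence to $g=1$, I would verify the base case $v^{1}=M\, v^{0}$ with $v^{0}=(1,0,0,0,0,0)^{t}$. Since $v^{0}$ has only a nonzero first entry equal to $1$, $M\, v^{0}$ is the first column of $M$, namely
\[
(q^{4}+q^{3}-q^{2}-q,\ q^{3}-q,\ q^{3}-2q^{2},\ q^{3},\ q^{3},\ -1)^{t}.
\]
I would then compare these numbers with the computations in Section \ref{sec:basicblocks}: $e(\overline{Z}_{0})=(q^{3}-q)(q+1)$, $e(\overline{Z}_{1})=q^{3}-q$, $e(\overline{Z}_{2})=q^{3}-2q^{2}$, $e(\overline{Z}_{3})=q^{3}$, and the Hodge monodromy $R(\overline{Z}_{4}/\ZZ_{2})=q^{3}T-N$ giving $\tilde a^{1}=q^{3}$, $\tilde b^{1}=-1$. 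Each of these matches, so $v^{1}=Mv^{0}$ is a genuine identity and the recurrence holds uniformly for all $g\ge 1$. The choice $v^{0}=(1,0,0,0,0,0)^{t}$ is also natural geometrically: for $g=0$ one has $\oZ_{0}^{0}$ a single point and the other strata empty.

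The only real work is verification, and the main obstacle is purely bookkeeping: matching, for each of the six recurrences, the coefficients appearing after the substitution of $v^{1}$ into the general $(k,h)=(g-1,1)$ identities. The most delicate of these to track is the $\tilde a^{g}$ / $\tilde b^{g}$ pair, because those coefficients are obtained by combining equation (\ref{eqn:pglgenusrelation}) (the compatibility with the total count $(q^{3}-q)^{2g}$) with the already-derived $\tilde e_{i}^{g}$'s and with the split of $R(\oZ_{4}/\ZZ_{2})$ into the trivial and non-trivial parts via (\ref{eqn:eoX4Z2}); one has to be careful that the two equations $(\epsilon),(\zeta)$ obtained from a single scalar identity are independent (this is where the decomposition $R(\oZ_{4}^{g}/\ZZ_{2})=\tilde a^{g}T+\tilde b^{g}N$ plus the total-count relation together pin down both $\tilde a^{g}$ and $\tilde b^{g}$, via (\ref{eqn:RX4-RX4Z2->eX4}) applied to the fibration itself and its quotient). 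Once that pair of equations is extracted cleanly, the assembly of $M$ is immediate and the proposition follows.
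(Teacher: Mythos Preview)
Your proposal is correct and follows the same approach as the paper: the proposition is simply the repackaging of the six recurrences $(\alpha)$--$(\zeta)$ into matrix form, and you correctly supplement this with an explicit check of the base case $v^{1}=Mv^{0}$ against the Section~\ref{sec:basicblocks} computations (which the paper leaves implicit). Your description of how $(\epsilon)$ and $(\zeta)$ are extracted is also right in spirit: one equation comes from the Hodge monodromy proposition giving $\tilde a^{g}+\tilde b^{g}$, the other from substituting $(\alpha)$--$(\delta)$ into the total-count identity~(\ref{eqn:pglgenusrelation}), and these two together determine $\tilde a^{g}$ and $\tilde b^{g}$ separately.
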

As a consequence, we obtain closed formulas for the E-polynomials $\tilde{e}_{i}^{g}$ of the representation spaces $\overline{Z}_{i}^{g}, i=0 \ldots 3$ and for the Hodge monodromy representation of $\overline{Z}_{4}^{g}$ for arbitrary genus.
\begin{thm}\label{thm:epolysofei}
For all $g\geq 1$,
\begin{align*}
\tilde{e}_{0}^{g} & = (q^{3}-q)^{2g-1}+q(q^{2}-1)^{2g-1}+\frac{1}{2}(q(q-1)(q^{2}+q)^{2g-1}+(q-2)(q+1)(q^{2}-q)^{2g-1})\\
\tilde{e}_{1}^{g} & = (q^{3}-q)^{2g-1}+q(q^{2}-1)^{2g-1}-\frac{1}{2}((q-1)(q^{2}+q)^{2g-1}+(q+1)(q^{2}-q)^{2g-1}) \\
\tilde{e}_{2}^{g} & = (q^{3}-q)^{2g-1}+\frac{1}{2}(-q(q^{2}+q)^{2g-1}+(q-2)(q^{2}-q)^{2g-1}) \\
\tilde{e}_{3}^{g} & = (q^{3}-q)^{2g-1}+\frac{1}{2}((q^{2}+q)^{2g-1}-(q^{2}-q)^{2g-1}) \\
\tilde{a}^{g} & = (q^{3}-q)^{2g-1}+\frac{1}{2}((q^{2}+q)^{2g-1}-(q^{2}-q)^{2g-1}) \\
\tilde{b}^{g} & = (q^{2}-1)^{2g-1}-\frac{1}{2}((q^{2}+q)^{2g-1}+(q^{2}-q)^{2g-1})
\end{align*}
\end{thm}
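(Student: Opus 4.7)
The starting point is Proposition \ref{prop:matrixrecurrence}, which identifies the sequence $\{v^g\}_{g\geq 0}$ as the orbit of the linear dynamical system $v^g=Mv^{g-1}$, with $M\in M_{6\x 6}(\ZZ[q])$ explicitly given in (\ref{matrixg-1-->g}) and $v^0=(1,0,0,0,0,0)$. The closed formulas in the statement suggest the ansatz
\[
v^g \,=\, w_1\,(q^3-q)^{2g-1} + w_2\,(q^2-1)^{2g-1} + w_3\,(q^2+q)^{2g-1} + w_4\,(q^2-q)^{2g-1},
\]
where the coefficient vectors $w_i\in\QQ(q)^6$ are read off directly from the statement: for example $w_1=(1,1,1,1,1,0)^t$ and $w_2=(q,q,0,0,0,1)^t$, while $w_3$ and $w_4$ have entries that are polynomials in $q$ of degree at most $2$ divided by $2$.

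Given this ansatz, my plan is to reduce the theorem to two routine verifications. First, since the four functions $\lambda_i^{2g-1}$ with $\lambda_1=q^3-q$, $\lambda_2=q^2-1$, $\lambda_3=q^2+q$, $\lambda_4=q^2-q$ are linearly independent in $g$, the identity $v^g=Mv^{g-1}$ decouples into the four eigenvector identities
\[
M w_i \,=\, \lambda_i^2\, w_i, \quad i=1,\dots,4.
\]
Each such identity is a statement about six polynomials in $\ZZ[q]$ that can be checked row by row. A sample computation illustrates the mechanism: the first entry of $Mw_1$ is the sum of the first five entries of the top row of $M$, which collects to $q^6-2q^4+q^2=(q^3-q)^2$, in agreement with the first coordinate of $w_1$ being $1$.

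Second, I would match the initial data. Since the recursion determines all $v^g$ from $v^1$, it suffices to plug $g=1$ into the stated closed forms and verify agreement with the basic blocks from Section \ref{sec:basicblocks}. The six quantities to check are $e(\oZ_0)$, $e(\oZ_1)$, $e(\oZ_2)$, $e(\oZ_3)$ and the two coefficients of (\ref{eqn:HodgemonZ4Z2}); all reduce to short polynomial expansions, for example the formula for $\tilde{e}_0^1$ simplifies to $2(q^3-q)+(q^2-1)(q^2-q)=q^4+q^3-q^2-q=e(\oZ_0)$.

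The main obstacle is that $M$ has entries of degree up to six, so the twenty-four polynomial identities arising from the four eigenvector checks are lengthy to verify by hand. None of them are subtle, though: each reduces to collecting coefficients in a polynomial of degree at most ten. A conceptual remark that simplifies the bookkeeping is that the four eigenvalues $\lambda_i^2$ are squares of natural group-theoretic E-polynomials such as $e(\PGL(2,\CC))=q^3-q$ and $e(\PGL(2,\CC)/U)=q^2-1$, reflecting the geometric origin of $M$ as encoding the handle attachment $X^g=X^{g-1}\#X^1$; this also clarifies why only four of the six potential eigenvalues contribute to the orbit of $v^0$ and dictates the natural order in which to carry out the verifications.
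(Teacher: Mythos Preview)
Your proposal is correct and follows essentially the same approach as the paper: both arguments rest on the eigenstructure of $M$. The paper presents it as an explicit diagonalization $M=PDP^{-1}$ (with the same four distinct eigenvalues $(q^3-q)^2$, $(q^2-1)^2$, $(q^2+q)^2$, $(q^2-q)^2$, the last two with multiplicity two) and then computes $v^g=PD^gP^{-1}v^0$, whereas you phrase it as verifying that the stated formula satisfies the recursion via the eigenvector identities $Mw_i=\lambda_i^2 w_i$ together with the initial condition at $g=1$; these are the derive and verify sides of the same computation.
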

\begin{proof}
It is a direct consequence of Proposition \ref{prop:matrixrecurrence}. From (\ref{eqn:matrixrecurrence}), we can diagonalize $M=PDP^{-1}$ over $\ZZ[q]$, where $P$ and $D$ are
$$
\scalemath{0.76}{
P=\begin{pmatrix}
-(q^{2}-2q-3) & -(q+1) & -(q-1)^{2} & q-1 & q & 1 \\ 0 & q+1 & 0 & -(q-1) & q & 1 \\ -(q-3) & -1 & q-1 & -1 & 0 & 1 \\ 0 & 1 & 0 & 1 & 0 & 1 \\ 1 & 0 & -1 & 0 & 0 & 1 \\ 1 & 0 & 1 & 0 &  1 & 0
\end{pmatrix}, \: D = \begin{pmatrix}
(q^{2}-q)^{2} & 0 & 0 & 0 & 0 & 0 \\ 0 & (q^{2}-q)^{2} & 0 & 0 & 0 & 0 \\ 0 & 0 & (q^{2}+q)^{2} & 0 & 0 & 0 \\ 0 & 0 & 0 & (q^{2}+q)^{2} & 0 & 0 \\ 0 & 0 & 0 & 0 & (q^{2}-1)^{2} & 0 \\ 0 & 0 & 0 & 0 & 0 & (q^{3}-q)^{2}
\end{pmatrix}}
$$
Setting $v^{g}=PD^{g}P^{-1}$ gives the desired formulas.
\end{proof}
From this result we derive the E-polynomials of the moduli spaces of Theorem \ref{thm:epolysmodulis} where the GIT quotient is geometric, dividing by the respective stabilizer.
\begin{thm} For all $g\geq 1$,
\begin{align*}
e(\mathcal{M}_{-\Id}^{g}) & = e(\oZ_{1}^{g})/(q^{3}-q) = (q^{3}-q)^{2g-2}+(q^{2}-1)^{2g-2}-\frac{1}{2}((q^{2}+q)^{2g-2}+(q^{2}-q)^{2g-2}) \\
e(\mathcal{M}_{J_{+}}^{g}) & = e(\oZ_{2}^{g})/q = (q^{2}-1)(q^{3}-q)^{2g-2}+\frac{1}{2}(-q(q+1)(q^{2}+q)^{2g-2}+(q-2)(q-1)(q^{2}-q)^{2g-2}) \\
e(\mathcal{M}_{J_{-}}^{g}) & =  e(\oZ_{3}^{g})/q = (q^{2}-1)(q^{3}-q)^{2g-2}+\frac{1}{2}((q+1)(q^{2}+q)^{2g-2}-(q-1)(q^{2}-q)^{2g-2}) \\
e(\mathcal{M}_{\lambda}^{g}) & = e(\oZ_{4,\lambda}^{g})/(q-1)= (\tilde{a}^{g}+\tilde{b}^{g})/(q-1) = (q^{2}+q)(q^{3}-q)^{2g-2}+(q+1)(q^{2}-1)^{2g-2} -q(q^{2}-q)^{2g-2}
\end{align*}
\end{thm}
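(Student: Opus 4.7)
The plan is to deduce this theorem as a direct consequence of Theorem~\ref{thm:epolysofei}, which already gives closed formulas for the E-polynomials of the representation spaces $\oZ_i^g$ ($i=0,1,2,3$) and for the pieces $\tilde{a}^g,\tilde{b}^g$ of the Hodge monodromy of $\oZ_4^g/\ZZ_2$. The key point is that in each of the four cases listed the moduli space is a \emph{geometric} quotient, so its E-polynomial is obtained by dividing $e(\oZ_C^g)$ by the E-polynomial of the stabilizer $\Stab(C)\subset \PGL(2,\CC)$.

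First I would identify the stabilizers and their E-polynomials. In $\PGL(2,\CC)$ one has $\Stab(-\Id)=\PGL(2,\CC)$ with $e=q^3-q$; $\Stab(J_\pm)\cong\CC$ with $e=q$ (the image of the unipotent subgroup $U$ under $\sldos\to\PGL(2,\CC)$); and $\Stab(\xi_\lambda)\cong\CC^\ast$ with $e=q-1$ (the image of the diagonal torus~$D$). These are the divisors appearing in the theorem statement.

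Next I would justify that the quotients are geometric, i.e.\ that $\oZ_C^g\to\mathcal{M}_C^g$ is a principal $\Stab(C)$-bundle, locally trivial in the analytic topology. For $C\neq \Id$, a pair $(A_1,B_1,\dots,A_g,B_g)\in\oZ_C^g$ cannot have a nontrivial $\PGL(2,\CC)$-stabilizer: such a stabilizer would centralise both $A_i$ and $B_i$ for all $i$ and hence the commutator product $C$, but the only elements of $\PGL(2,\CC)$ whose centraliser meets $\Stab(C)$ nontrivially while fixing every $A_i,B_i$ are trivial (for $C=-\Id$ the representation must be irreducible, and for $C=J_\pm,\xi_\lambda$ the centraliser condition forces the conjugating element into $\Stab(C)$, where the free action follows from a direct analysis on the slices used in Section~\ref{sec:basicblocks}). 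Once freeness is established, the bullet point after Theorem~\ref{thm:general-fibr} applied to the fibration
\[
\Stab(C)\too \oZ_C^g\too \mathcal{M}_C^g
\]
yields $e(\mathcal{M}_C^g)=e(\oZ_C^g)/e(\Stab(C))$, since $\Stab(C)$ is a connected algebraic group. For the parabolic family we additionally use that the fibre $\oZ_{4,\lambda}^g$ of $\oZ_4^g\to\CC-\{0,\pm 1\}$ has E-polynomial $\tilde{a}^g+\tilde{b}^g$, read off from the Hodge monodromy $R(\oZ_4^g/\ZZ_2)=\tilde{a}^gT+\tilde{b}^gN$ via $e(F)=$ (sum of coefficients in $R(\oZ_4^g)$).

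The last step is a purely algebraic simplification. Substituting the formulas from Theorem~\ref{thm:epolysofei}, dividing by $q^3-q$, $q$, $q$, $q-1$ respectively, and collecting terms using the factorisations $q^3-q=q(q-1)(q+1)$, $q^2\pm q=q(q\pm1)$ and $q^2-1=(q-1)(q+1)$, each quotient reduces to the closed form asserted. The main obstacle is the freeness argument in the second step; everything else is routine given Theorem~\ref{thm:epolysofei}.
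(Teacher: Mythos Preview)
Your proposal is correct and follows exactly the same approach as the paper: the paper's proof consists of a single sentence pointing out that these moduli spaces arise as geometric quotients, so their E-polynomials follow from Theorem~\ref{thm:epolysofei} by dividing by the E-polynomial of the respective stabilizer. You have in fact supplied more detail than the paper does, particularly in justifying freeness of the $\Stab(C)$-action and in spelling out the algebraic simplifications.
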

\begin{cor}
Let $g\geq 1$. The Hodge monodromy representation of the space $\mathcal{M}_{par}$ that parametrizes the set of parabolic $\pgl$-character varieties for $\lambda\neq 0,\pm 1$ given by
$$
R(\mathcal{M}_{\lambda})=((q^{2}+q)(q^{3}-q)^{2g-2}+(q+1)(q^{2}-1)^{2g-2}-q(q^{2}-q)^{2g-2})T
$$
\end{cor}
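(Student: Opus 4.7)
The plan is to derive the corollary from Theorem~\ref{thm:epolysofei} together with the fact, already established in the induction of the previous section, that the Hodge monodromy representation of the fibration $\oZ_4^g \to \CC - \{0, \pm 1\}$ is a multiple of the trivial representation: $R(\oZ_4^g) = (\tilde{a}^g + \tilde{b}^g)\, T$. Two steps are required. First, argue that $R(\mathcal{M}_\lambda)$ is itself a multiple of $T$. Second, identify the coefficient by dividing $\tilde{a}^g + \tilde{b}^g$ by $q-1$ and simplifying.

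For the first step, I note that $\mathcal{M}_\lambda^g = \oZ_{4,\lambda}^g / \mathrm{Stab}(\xi_\lambda)$, where $\mathrm{Stab}(\xi_\lambda) \cong \CC^*$ is the image in $\pgl$ of the diagonal torus. This stabilizer acts fibrewise on $\oZ_4^g \to \CC - \{0, \pm 1\}$ and commutes with parallel transport along any loop in the base, so it descends to a fibrewise action on the quotient fibration $\mathcal{M}_{par} \to \CC - \{0, \pm 1\}$. Since the monodromy on $\oZ_4^g$ is trivial, the induced monodromy on $\mathcal{M}_{par}$ is likewise trivial, and therefore $R(\mathcal{M}_{par}) = e(\mathcal{M}_\lambda^g)\, T$.

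For the second step, I substitute the closed formulas of Theorem~\ref{thm:epolysofei}. The two half-contributions $\pm \tfrac{1}{2}(q^2+q)^{2g-1}$ cancel in the sum, yielding
$$\tilde{a}^g + \tilde{b}^g = (q^3-q)^{2g-1} + (q^2-1)^{2g-1} - (q^2-q)^{2g-1}.$$
Using the factorizations $q^3-q = (q-1)(q^2+q)$, $q^2-1 = (q-1)(q+1)$ and $q^2-q = (q-1)q$, dividing by $(q-1) = e(\CC^*)$ produces
$$e(\mathcal{M}_\lambda^g) = \frac{\tilde{a}^g+\tilde{b}^g}{q-1} = (q^2+q)(q^3-q)^{2g-2} + (q+1)(q^2-1)^{2g-2} - q(q^2-q)^{2g-2},$$
which matches the stated coefficient of $T$. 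The only conceptually interesting point is the monodromy descent through the stabilizer quotient; the rest is routine algebraic simplification and no substantive obstacle is anticipated.
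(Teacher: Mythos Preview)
Your proposal is correct and follows essentially the same approach as the paper: identify $\mathcal{M}_{par}=\oZ_4^g/\CC^*$, use that $R(\oZ_4^g)=(\tilde a^g+\tilde b^g)T$ from the induction, and divide by $e(\CC^*)=q-1$ using the formulas of Theorem~\ref{thm:epolysofei}. You supply a bit more detail on why the trivial monodromy descends through the stabilizer quotient, but the argument and the algebra are the same.
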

\begin{proof}
We have that $\mathcal{M}_{par}=\bigsqcup_{\lambda \neq 0,\pm 1} \mathcal{M}_{\lambda} = \oZ_{4}^{g}/\CC^{\ast}$. Since $R(\oZ_{4}^{g})=(\tilde{a}^{g}+\tilde{b}^{g})T$, the result follows from Theorem \ref{thm:epolysofei} dividing by $e(\CC^{\ast})=q-1$.
\end{proof}
We obtain the following corollary, which was also true for $\sldos$:
 \begin{cor} \label{cor:Hauselrel}
 For all $g\geq 1$,
 $$
 e(\mathcal{M}^{g}_{J_{-}}(\pgl))+(q+1)e(\mathcal{M}^{g}_{-\Id}(\pgl))=e(\mathcal{M}^{g}_{\lambda}(\pgl))
 $$
 \end{cor}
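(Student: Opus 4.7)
The plan is to prove the identity by direct substitution of the closed-form expressions furnished by Theorem \ref{thm:epolysmodulis}, since all three quantities have already been written in terms of a common basis of polynomials $\{(q^{3}-q)^{2g-2},\,(q^{2}-1)^{2g-2},\,(q^{2}+q)^{2g-2},\,(q^{2}-q)^{2g-2}\}$. So rather than seeking a conceptual reason for the relation, I would simply compute $(q+1)\,e(\mathcal{M}^{g}_{-\Id})+e(\mathcal{M}^{g}_{J_{-}})$ and compare coefficient-by-coefficient with $e(\mathcal{M}^{g}_{\lambda})$.

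Concretely, I would first form $(q+1)\,e(\mathcal{M}^{g}_{-\Id})$, which equals
\[
(q+1)(q^{3}-q)^{2g-2}+(q+1)(q^{2}-1)^{2g-2}-\tfrac{q+1}{2}(q^{2}+q)^{2g-2}-\tfrac{q+1}{2}(q^{2}-q)^{2g-2},
\]
and then add $e(\mathcal{M}^{g}_{J_{-}})$. Grouping by basis elements, the coefficient of $(q^{3}-q)^{2g-2}$ becomes $(q+1)+(q^{2}-1)=(q+1)+(q-1)(q+1)=q(q+1)=q^{2}+q$; the coefficient of $(q^{2}-1)^{2g-2}$ is $q+1$; the $(q^{2}+q)^{2g-2}$ coefficients cancel since $-\tfrac{q+1}{2}+\tfrac{q+1}{2}=0$; and the coefficient of $(q^{2}-q)^{2g-2}$ becomes $-\tfrac{q+1}{2}-\tfrac{q-1}{2}=-q$. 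These are precisely the coefficients appearing in $e(\mathcal{M}^{g}_{\lambda})$, which concludes the proof.

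There is essentially no obstacle here: the only mildly noteworthy step is the simplification $(q+1)+(q^{2}-1)=q(q+1)$, which is what makes the identity work cleanly, together with the perfect cancellation of the $(q^{2}+q)^{2g-2}$ contributions. Since Theorem \ref{thm:epolysmodulis} has already been established via the matrix recurrence of Proposition \ref{prop:matrixrecurrence} and its diagonalization, no additional geometric input is required; the corollary is a formal consequence of the explicit formulas. One could also record, as a remark, that the appearance of the factor $q+1=e(\mathbb{P}^{1})$ suggests a geometric reason (a $\mathbb{P}^{1}$-bundle relating $\mathcal{M}^{g}_{-\Id}$ and a subvariety of $\mathcal{M}^{g}_{\lambda}$ in the limit $\lambda\to -1$), paralleling the analogous statement for $\sldos$, but a geometric proof is not needed to establish the identity.
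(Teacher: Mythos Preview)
Your proof is correct and is exactly the approach the paper takes: the corollary is stated immediately after the explicit formulas for $e(\mathcal{M}^{g}_{-\Id})$, $e(\mathcal{M}^{g}_{J_{-}})$, and $e(\mathcal{M}^{g}_{\lambda})$ and is left without further argument, so the intended proof is precisely the coefficient-by-coefficient verification you wrote out.
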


%%%%%%%%%%%%%%%%%%%%%%%%%%%%%%%%%%%%%%%%%%%%%%%%%%%%%%%%%%%%%%%%%%%%%%%%%%%%%%%%%%%%%%%
\section{E- polynomial of the moduli space of representations}
%%%%%%%%%%%%%%%%%%%%%%%%%%%%%%%%%%%%%%%%%%%%%%%%%%%%%%%%%%%%%%%%%%%%%%%%%%%%%%%%%%%%%%%
To compute the E-polynomial of $\mathcal{M}_{\Id}^{g}$, we will first compute the E-polynomial of the reducible locus $Z_{0}^{red}$ using stratifications. Then
$$
e(\mathcal{M}_{\Id})=e(\mathcal{M}_{\Id}^{irr})+e(\mathcal{M}_{\Id}^{red}),
$$
where the first E-polynomial can be computed as $e(\mathcal{M}_{\Id}^{irr})=e(Z_{0})-e(Z_{0}^{red})/e(\pgl)$ (the quotient is geometric) and the latter can be computed directly as follows.

Recall that a reducible representation given by $(A_{1},B_{1},\ldots,A_{g},B_{g})\in \sldos^{2g}$ is S-equivalent to:
\begin{equation} \label{eqn:redrepres}
\left( \begin{pmatrix} \lambda_{1} & 0 \\ 0 & \lambda_{1}^{-1} \end{pmatrix}, \begin{pmatrix} \lambda_{2} & 0 \\ 0 & \lambda_{2}^{-1} \end{pmatrix}, \ldots, \begin{pmatrix} \lambda_{2g} & 0 \\ 0 & \lambda_{2g}^{-1} \end{pmatrix} \right)
\end{equation} 
under the $\ZZ_{2}$-action $(\lambda_{1},\ldots,\lambda_{2g})\sim (\lambda_{1}^{-1},\ldots,\lambda_{2g}^{-1})$, and the $\ZZ_{2}^{2g}$-action given by $\lambda_{i}\sim -\lambda_{i}, i=1\ldots 2g$. To get rid of the latter, we can take the parameters $\mu_{i}=\lambda_{i}^{2} \in \CC^{\ast}$; the first action takes then $(\mu_{1},\ldots, \mu_{2g})$ to $(\mu_{1}^{-1},\ldots, \mu_{2g}^{-1})$. We obtain:
\begin{align*}
e(\mathcal{M}_{\Id}^{red}) & = e((\CC^*)^{2g}/\ZZ_2) \\
& = (e(\CC^*)^{+})^{2g}+ \binom{2g}{2}(e(\CC^*)^{+})^{2g-2}(e(\CC^*)^{-})^{2}+\ldots + \binom{2g}{2g-2}(e(\CC^*)^{+})^{2}(e(\CC^*)^{-})^{2g-2}+ (e(\CC^*)^{-})^{2g} \\
& = \frac{1}{2} \left( (q-1)^{2g} +(q+1)^{2g} \right).
\end{align*}
A reducible representation will happen if there is a common eigenvector. If we lift the representation to $\sldos$, with respect to a suitable basis it will be of the form:
$$
\rho=\left( \begin{pmatrix} \lambda_{1} & a_{1} \\ 0 & \lambda_{1}^{-1} \end{pmatrix}, \begin{pmatrix} \lambda_{2} & a_{2} \\ 0 & \lambda_{2}^{-1} \end{pmatrix}, 
\ldots, \begin{pmatrix} \lambda_{2g} & a_{2g} \\ 0 & \lambda_{2g}^{-1} \end{pmatrix} \right),
$$
that can be identified with $(\CC^{\ast}\times \CC)^{2g}$. The condition $\prod_{i=1}^{g}[A_{i},B_{i}]=\Id$ is rewritten as
\begin{equation} \label{eqn:redlocusrelation}
\sum_{i=1}^{g} \lambda_{2i}(\lambda_{2i-1}^{2}-1)a_{2i} - \lambda_{2i-1}(\lambda_{2i}^{2}-1)a_{2i-1} = 0.
\end{equation}
For the $\pgl$-case, we need to consider the quotients for the $2g$ $\ZZ_{2}$-actions given by $(a_{i},\lambda_{i})\sim (-a_{i},-\lambda_{i})$, $i=1\ldots 2g$. As it was done in the $\sldos$-case, we can stratify the reducible locus in four cases, which are in fact compatible with the $\ZZ_{2}$-actions, and compute in this way the $\pgl$-case. They are
\begin{itemize}
\item $R_{1}$, given by $(a_{1},\ldots, a_{2g}) \in \CC \langle \lambda_{1}-\lambda_{1}^{-1},\ldots, \lambda_{2g}-\lambda_{2g}^{-1} \rangle$, and $(\lambda_{1},\ldots, \lambda_{2g})\neq (\pm 1, \ldots, \pm 1)$. We can conjugate the representation to the diagonal form (\ref{eqn:redrepres}) and assume that $a_{i}=0$, so that the stabilizer is the set of diagonal matrices $D$. To take into account the $\ZZ_{2}^{2g}$-action, we may define $\mu_{i}=\lambda_{i}^{2}$, so that we obtain a slice for the conjugacy action parametrized by $(\mu_{1},\ldots,\mu_{2g})\in B:=(\CC^{\ast})^{2g}- \lbrace (1,\ldots, 1) \rbrace$.
% There is an extra $\ZZ_{2}$-action given by the choice of eigenvalues in (\ref{eqn:redrepres}), which %has $(\mu_{1},\ldots, \mu_{2g})=(-1,\ldots,-1)$ as a fixed point, a case that we will exclude %momentarily. Considering $(\mu_{1},\ldots, \mu_{2g})$ in $B'=B - \lbrace (-1,\ldots,-1) \rbrace$,
We have a diagram as in \cite{mamu2},
$$
 \begin{array}{ccccc} %\label{eqn:align}
  \CC^* &\to & \PGL(2,\CC) \x B &\to & \tilde R_1 \nonumber\\
  || & & \downarrow & & \downarrow \\  
  \CC^* &\to & (\PGL(2,\CC) \x B)/\ZZ_2 &\to & R_1 \nonumber
  \end{array}
  $$
Now, since
$$
e(B/\ZZ_2)=\frac{1}{2}((q-1)^{2g}+(q+1)^{2g})-1 
$$
%we get that adding the contribution from the fixed point, which equals
%$$
%e(\rho_{(-1,\ldots,-1)})=e((-1,\ldots,-1))e(PGL/D)^{+}=(q^2-q)
%$$
the total E-polynomial of this stratum is
\begin{align*}
e(R_{1}) & = (q^2-q) e(B/\ZZ_2)+ q\, e(B) \\
& = (q^2-q) \left( \frac{1}{2}((q-1)^{2g}+(q+1)^{2g})-1\right) +q((q-1)^{2g}-1)\\
%= q^2 \left( \frac{1}{2}((q-1)^{2g}+(q+1)^{2g})-2^{2g}\right) +q\left( \frac{1}{2} ((q-1)^{2g}-(q+1)^{2g}) \right) \\
& = (q^3-q)\frac{1}{2}\left( (q-1)^{2g-1}+(q+1)^{2g-1} \right)-q^{2}.
\end{align*}

\item $R_{2}$, defined by  $(a_{1},\ldots, a_{2g}) \not \in \CC \langle \lambda_{1}-\lambda_{1}^{-1},\ldots, \lambda_{2g}-\lambda_{2g}^{-1} \rangle$, and $(\lambda_{1},\ldots, \lambda_{2g})\neq (\pm 1, \ldots, \pm 1)$. Condition (\ref{eqn:redlocusrelation}) defines a hyperplane $H$ in $\CC^{2g}$, from which we exclude a line corresponding to the condition for $(a_{1},\ldots, a_{n})$. The $\ZZ_2^{2g}$-actions take $(a_{i},\lambda_{i})$ to $(-a_{i},-\lambda_{i})$. Writing $A=(\CC^{\ast})^{2g}-\lbrace (\pm 1, \ldots, \pm 1)\rbrace$, we get a fibration $H-l \rightarrow \mathcal{A} \rightarrow A$, so that
\begin{align*}
e(\mathcal{A}/\ZZ_2^{2g}) & = e(A/\ZZ_2^{2g})e(H-l/\ZZ_2^{2g}) \\
 & = ((q-1)^{2g}-1)(q^{2g-1}-q).
\end{align*}
since the quotient of the hyperplane $H\cong \CC^{2g-1}$ under each of the $\ZZ_2$-actions is again isomorphic to $\CC^{2g-1}$ (and therefore, for each action, $e(H-l)^{-}=0$). Now, writing $U'\cong D\times U$ for the upper triangular matrices, the set of reducible representations can be given as a fibration $U'\rightarrow \mathcal{A}/\ZZ_2^{2g} \times \pgl \rightarrow R_{2}$ (note that the $\ZZ_2$-actions do not interfere with the conjugacy action of $\pgl$). Therefore
\begin{align*}
e(R_{2}) & = ((q-1)^{2g}-1)(q^{2g-1}-q)(q^{3}-q)/(q^{2}-q) \\
& = (q+1)(q^{2g-1}-q)((q-1)^{2g}-1).
\end{align*}

\item $R_{3}$, given by $(a_{1},\ldots, a_{2g})=(0,\ldots,0)$, $(\lambda_{1},\ldots, \lambda_{2g})=(\pm 1, \ldots, \pm 1)$, which is precisely when $A_{i},B_{i}= \pm \Id$. There are $2^{2g}$ points, which get identified under the $\ZZ_2^{2g}$-action. So
$$
e(R_{3}) = 1.
$$

\item $R_{4}$, which is given by the case $(a_{1},\ldots, a_{2g}) \neq (0,\ldots, 0)$, $(\lambda_{1},\ldots, \lambda_{2g})=(\pm 1, \ldots, \pm 1)$. There is at least one matrix of Jordan type, so $D$ acts projectivizing the set $(a_{1},\ldots, a_{2g}) \in \CC^{2g}-\lbrace (0,\ldots, 0) \rbrace$ and the $\ZZ_2^{2g}$-actions allow us to reduce to the case where $\lambda_{1}=\ldots=\lambda_{2g}=1$. The stabilizer is isomorphic to $U$. Therefore
$$
e(R_{4})=(q^{2g}-1)(q+1)
$$
\end{itemize}
We have obtained that the E-polynomial of the reducible locus in $X_{0}^{g}$ is
\begin{align*}
e(R) & = e(R_{1})+e(R_{2})+e(R_{3})+e(R_{4}) \\
& = (q^{3}-q) \left( \frac{1}{2}((q+1)^{2g-1} -(q-1)^{2g-1}) +q^{2g-2}+(q-1)(q^{2}-q)^{2g-2} \right).
\end{align*}
To obtain the irreducible locus,
\begin{align*}
e(I) & = e(X_{0}^{g})-e(R) \\
& = (q^{3}-q) \left( (q^{3}-q)^{2g-2}+(q^{2}-1)^{2g-2}-(q^{2}-q)^{2g-2}+\frac{1}{2}q^{2g-1}((q-1)^{2g-2}+(q+1)^{2g-2}) \right.\\
& \left. -\frac{1}{2}((q+1)^{2g-1}-(q-1)^{2g-1})-q^{2g-2}-(q-1)(q^{2}-q)^{2g-2} \right)
\end{align*}
Finally,
\begin{align*}
e(\mathcal{M}_{\Id}^{g}) & = e(I)/e(\pgl) + e(\mathcal{M}_{\Id}^{red}) \\
& = (q^{3}-q)^{2g-2}+(q^{2}-1)^{2g-2}-q(q^{2}-q)^{2g-2} -q^{2g-2}  \\
& +\frac{1}{2}q^{2g-1}((q-1)^{2g-2}+(q+1)^{2g-2})+\frac{1}{2}q((q+1)^{2g-1}+(q-1)^{2g-1}) \\
\end{align*}
which completes the proof of Theorem \ref{thm:epolysmodulis}.

%%%%%%%%%%%%%%%%%%%%%%%%%%%%%%%%%%%%%%%%%%%%%%%%%%%%%%%%
\subsection*{Topological consequences}
%%%%%%%%%%%%%%%%%%%%%%%%%%%%%%%%%%%%%%%%%%%%%%%%%%%%%%%%
We state some consequences that can be derived from Theorem \ref{thm:epolysmodulis}. The first one is the Euler characteristic of $\mathcal{M}_{C}$, that can be obtained evaluating $e(\mathcal{M}_{C})$ at $q=1$. The value $\chi(\mathcal{M}_{-\Id})$ agrees with the result obtained in \cite{hausel-rvillegas:2008}.
\begin{cor}
Let $X$ be a complex curve of genus $g\geq 2$. The Euler characteristics of the character varieties $\mathcal{M}_{C}(\pgl)$ are
\begin{align*}
\chi (\mathcal{M}_{\Id}) & = 3\cdot 2^{2g-3}-1 \\
\chi (\mathcal{M}_{-\Id}) & = -2^{2g-3}  \\
\chi (\mathcal{M}_{J_{+}}) & = -2^{2g-2} \\
\chi (\mathcal{M}_{J_{-}}) & = 2^{2g-2} \\
\chi (\mathcal{M}_{\xi_{\lambda}}) & = 0
\end{align*}
\end{cor}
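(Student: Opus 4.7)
The plan is quite direct. For any complex quasi-projective variety $Z$ one has $e(Z)(1,1)=\sum_{k,p,q}(-1)^{k}h_{c}^{k,p,q}(Z)=\chi_{c}(Z)$, and for complex algebraic varieties $\chi_{c}(Z)=\chi(Z)$. Hence the Euler characteristic of each $\mathcal{M}_{C}(\pgl)$ is obtained by substituting $q=1$ into the corresponding formula of Theorem~\ref{thm:epolysmodulis}, and my proof would simply carry out these five specialisations term by term.

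The bookkeeping reduces to knowing which monomials survive at $q=1$. Since $g\geq 2$, the exponents $2g-2$ and $2g-1$ are at least $2$ and $3$, so the factors $(q^{3}-q)^{2g-2}$, $(q^{2}-1)^{2g-2}$, $(q^{2}-q)^{2g-2}$, $(q-1)^{2g-2}$ and $(q-1)^{2g-1}$ all vanish at $q=1$, while $(q^{2}+q)^{2g-2}|_{q=1}=2^{2g-2}$, $(q+1)^{2g-2}|_{q=1}=2^{2g-2}$, $(q+1)^{2g-1}|_{q=1}=2^{2g-1}$, and $q^{m}|_{q=1}=1$. Every monomial in Theorem~\ref{thm:epolysmodulis} is a product of such factors, so its value at $q=1$ is immediate.

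Applying these rules: for $\mathcal{M}_{-\Id}$ only $-\tfrac{1}{2}(q^{2}+q)^{2g-2}$ survives, giving $-2^{2g-3}$; for $\mathcal{M}_{J_{+}}$ only $-\tfrac{1}{2}q(q+1)(q^{2}+q)^{2g-2}$ survives and yields $-2^{2g-2}$; for $\mathcal{M}_{J_{-}}$ only $\tfrac{1}{2}(q+1)(q^{2}+q)^{2g-2}$ survives and yields $2^{2g-2}$; for $\mathcal{M}_{\lambda}$ every monomial carries a factor that vanishes at $q=1$, so the answer is $0$. For the identity case the surviving pieces are $-q^{2g-2}\mapsto -1$, $\tfrac{1}{2}q^{2g-1}(q+1)^{2g-2}\mapsto 2^{2g-3}$ and $\tfrac{1}{2}q(q+1)^{2g-1}\mapsto 2^{2g-2}$, which add up to $3\cdot 2^{2g-3}-1$. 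There is no substantive obstacle: the argument is pure substitution followed by counting surviving monomials, with the only care needed being signs and exponents in the longer formula for $\mathcal{M}_{\Id}$; the agreement of $\chi(\mathcal{M}_{-\Id})=-2^{2g-3}$ with \cite{hausel-rvillegas:2008} serves as a convenient sanity check.
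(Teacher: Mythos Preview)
Your argument is correct and is exactly the approach the paper takes: the paper simply states that the Euler characteristics are obtained by evaluating $e(\mathcal{M}_{C})$ at $q=1$, and notes the agreement of $\chi(\mathcal{M}_{-\Id})$ with \cite{hausel-rvillegas:2008}. You have carried out this substitution in more detail than the paper itself, and the term-by-term bookkeeping is accurate.
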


\begin{cor}
Let $X$ be a complex curve of genus $g\geq 2$. The spaces $\mathcal{M}_{\Id}$ and $\mathcal{M}_{-\Id}$ are of dimension $6g-6$ and the spaces $\mathcal{M}_{J_{+}}, \mathcal{M}_{J_{-}}, \mathcal{M}_{\xi_{\lambda}}$ are of dimension $6g-4$. All of them have a unique component of maximal dimension.
\end{cor}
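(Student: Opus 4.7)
The plan is to extract the dimensions and the top-component counts directly from the explicit E-polynomials in Theorem \ref{thm:epolysmodulis}. The underlying principle is classical mixed Hodge theory: for a complex quasi-projective variety $Z$ of dimension $n$, Deligne's weight bounds imply that $H^{k}_{c}(Z)$ has Hodge numbers $h^{p,q}$ with $p+q\le k$, and $H^{2n}_{c}(Z)$ is pure of type $(n,n)$ with $\dim H^{2n}_{c}(Z)$ equal to the number of top-dimensional irreducible components of $Z$. In particular, for a variety of balanced (Hodge--Tate) type the polynomial $e(Z)$ viewed as a polynomial in $q$ has degree exactly $n=\dim_{\CC} Z$, and the coefficient of $q^{n}$ equals the number of maximal components. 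All the character varieties in question are of balanced type by the second theorem of the introduction, so this reading is legitimate.

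First I would expand each of the five formulas in Theorem \ref{thm:epolysmodulis} and isolate the summand of highest degree. For $e(\mathcal{M}_{\Id}^{g})$ and $e(\mathcal{M}_{-\Id}^{g})$ the only summand reaching top degree is $(q^{3}-q)^{2g-2}$, giving leading term $q^{6g-6}$ with coefficient $1$; all other summands have degree at most $4g-3$ (as one checks directly term by term). For $e(\mathcal{M}_{J_{\pm}}^{g})$ the dominant piece is $(q^{2}-1)(q^{3}-q)^{2g-2}$, with leading term $q^{6g-4}$ and coefficient $1$, the remaining summands being of degree at most $4g-2$. For $e(\mathcal{M}_{\lambda}^{g})$ the leading contribution comes from $(q^{2}+q)(q^{3}-q)^{2g-2}$, again $q^{6g-4}$ with coefficient $1$. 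Reading off: $\dim \mathcal{M}_{\pm\Id}^{g}=6g-6$ and $\dim \mathcal{M}_{J_{\pm}}^{g}=\dim \mathcal{M}_{\lambda}^{g}=6g-4$, and in each of the five cases the leading coefficient $1$ says that there is a unique component of maximal dimension.

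The only slightly delicate point, and therefore the step where I would be most careful, is the justification that $\deg_{q}e(Z)=\dim_{\CC} Z$ and that the leading coefficient counts top-dimensional components in the Hodge--Tate situation. This reduces to Deligne's weight inequality for compactly supported cohomology together with the purity of $H^{2n}_{c}$ on a variety of pure dimension $n$: a class of bi-type $(n,n)$ can only occur in degree $\geq 2n$, so no cancellation at the leading coefficient can take place within a balanced variety. Once this structural fact is in place, the corollary reduces to the routine polynomial-degree comparisons made above in the five formulas of Theorem \ref{thm:epolysmodulis}.
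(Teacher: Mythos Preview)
Your proposal is correct and follows essentially the same approach as the paper: read off the dimension and the number of top-dimensional components from the degree and leading coefficient of the E-polynomial, using the formulas of Theorem~\ref{thm:epolysmodulis}. You are in fact more careful than the paper, which simply asserts this reading without spelling out the Deligne weight argument or the explicit degree comparisons you carry out.
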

\begin{proof}
The degree of $e(\mathcal{M}_{C})$ gives the dimension of the character variety and its leading coefficient the number of irreducible components of maximal dimension. The result follows from Theorem \ref{thm:epolysmodulis}.
\end{proof}

%%%%%%%%%%%%%%%%%%%%%%%%%%%%%%%%%%%%%%%%%%%%%%%%%%%%%%%
\section*{Mirror symmetry between $\sldos,\pgl$-character varieties}
%%%%%%%%%%%%%%%%%%%%%%%%%%%%%%%%%%%%%%%%%%%%%%%%%%%%%%%%%%%%
The relation between mirror symmetry and character varieties for Langlands dual groups has been object of study in recent years. Clasically, if we look at $\mathcal{M}_{Dol}(PGL(n,\CC))$ and $\mathcal{M}_{Dol}(SL(n,\CC))$, both spaces can be regarded as fibrations over a common base, the Hitchin base $\mathcal{A}_{SL(n,\CC)}\cong \mathcal{A}_{PGL(n,\CC)}$, so that the generic fibres are dual abelian varieties. If the complex structure is changed, then the de Rham moduli spaces $\mathcal{M}_{dR}(SL(n,\CC))$, $\mathcal{M}_{dR}(PGL(n,\CC))$ become special Lagrangian fibrations that satisfy the conditions for the SYZ setup \cite{hausel-thaddeus:2003}. Inspired by these situations, the following conjecture for the Betti moduli space was proposed in \cite{hausel:2005}, called the Topological Mirror Test, which was proved in several cases ($n=4$, or prime):
\begin{conj} \label{conj:mirrorepoly}
 For all $d,e\in \ZZ$, such that $(d,n)=(e,n)=1$, we have
$$
e_{st}^{B^{e}}(x,y,\mathcal{M}_{B}^{d}(SL(n,\CC)))=e_{st}^{\hat{B}^{d}}(x,y,\mathcal{M}_{B}^{e}(PGL(n,\CC)))
$$
\end{conj}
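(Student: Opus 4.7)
The plan is to establish Conjecture \ref{conj:mirrorepoly} in the case $n=2$ addressed by this paper; here the coprimality assumption forces both moduli spaces to carry the monodromy class $-\Id$ around the puncture, so the conjecture reduces to a statement about $\mathcal{M}_{-\Id}(\sldos)$ and $\mathcal{M}_{-\Id}(\pgl)$ together with mutually dual $B$-fields. The backbone is the Hausel--Thaddeus formula expressing, for a finite abelian quotient $Z/\Gamma$ with $B$-field $B\in H^{2}(B\Gamma,U(1))$ and associated character $\chi_{B}\colon \Gamma \to U(1)$,
\begin{equation*}
e_{st}^{B}(Z/\Gamma)=\sum_{\gamma\in \Gamma}\chi_{B}(\gamma)\, q^{F(\gamma)}\, e((Z^{\gamma})/\Gamma),
\end{equation*}
where $F(\gamma)$ denotes the fermionic (age) shift at the $\gamma$-fixed stratum. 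Applied to $Z=\mathcal{M}_{-\Id}(\sldos)$ with its $\Gamma=\ZZ_2^{2g}\cong H^{1}(\Sigma_g,\ZZ_2)$-action, the left-hand side of the conjecture becomes such a sum; the right-hand side is the analogous sum for the Poincar\'e-dual $B$-field on the $\pgl$-side.

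For each $\gamma\in \ZZ_2^{2g}$ I would identify $Z^{\gamma}$ with a twisted character variety on the unramified double cover $\widetilde{\Sigma}_{g}\to \Sigma_{g}$ classified by $\gamma$, via the standard argument that an $\sldos$-representation of $\pi_{1}(\Sigma_{g}\setminus\{p\})$ conjugation-fixed by the sign pattern $\gamma$ is pushed forward from a representation of $\pi_{1}(\widetilde{\Sigma}_{g}\setminus\{\widetilde{p}\})$. Its $E$-polynomial can then be computed by re-running the basic-block analysis of Section \ref{sec:basicblocks} and the matrix recurrence of Proposition \ref{prop:matrixrecurrence} on the surface $\widetilde{\Sigma}_{g}$, with the boundary monodromy dictated by $\gamma$ and the puncture. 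Combining these polynomials with the explicit values in Theorem \ref{thm:epolysmodulis} assembles the left-hand side; a parallel computation starting from $\mathcal{M}_{-\Id}(\pgl)$ with its dual $\ZZ_2^{2g}$-action gives the right-hand side. The matching is then a discrete Fourier transform over $\ZZ_{2}^{2g}$: the pairing between $B$-fields and group elements is the self-dual symplectic form on $H^{1}(\Sigma_{g},\ZZ_{2})$, so exchanging $B^{e}$ with $B^{d}$ amounts to Fourier inversion, and the identity follows provided the polynomials $e((Z^{\gamma})/\Gamma)$ depend on $\gamma$ only through its symplectic type.

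The main obstacle is pinning down both the fermionic shifts $F(\gamma)$ and the structure of the fixed loci $Z^{\gamma}$. For $n=2$ all normal-bundle weights at fixed points are $\pm 1$, so $F(\gamma)$ is essentially a codimension count; however the fixed loci are singular (they meet the reducible locus of $Z$ non-trivially), and computing $e((Z^{\gamma})/\Gamma)$ requires re-executing the induction of Section \ref{chapter:highgenus} on a cover of genus $2g-1$ with twisted boundary data. A full proof would therefore need an extension of the techniques of this paper to the family of covers $\widetilde{\Sigma}_{g}\to \Sigma_{g}$ parametrised by $H^{1}(\Sigma_{g},\ZZ_{2})$; the block-matrix structure of (\ref{eqn:matrixrecurrence}) suggests this is tractable but computationally heavy, and one would expect the final identity to follow from the same kind of cancellation that produced the clean expressions of Theorem \ref{thm:epolysofei}. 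For general $n$, the explicit stratification strategy breaks down and one must instead appeal to the $p$-adic integration method of Gr\"ochenig--Wyss--Ziegler, which lies outside the scope of the techniques developed here.
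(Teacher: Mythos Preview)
The statement you are attempting to prove is a \emph{conjecture} in the paper, not a theorem: the paper does not supply a proof of Conjecture \ref{conj:mirrorepoly}. It is quoted from \cite{hausel:2005} as background, and the paper only remarks that the case $n=2$, $d$ odd was already established in \cite{hausel-rvillegas:2008}. The paper's contribution on this point is a \emph{consistency check}: using the explicit formulas of Theorem \ref{thm:epolysmodulis} together with the $\sldos$ formulas from \cite{mamu2}, it verifies that
\[
e(\mathcal{M}_{-\Id}(\sldos))-e(\mathcal{M}_{-\Id}(\pgl))
=(2^{2g}-1)\,q^{2g-2}\!\left(\frac{(q-1)^{2g-2}-(q+1)^{2g-2}}{2}\right),
\]
which matches the stringy correction already computed elsewhere. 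There is no analysis of fixed loci, no computation of fermionic shifts, and no Fourier-transform argument in the paper.

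Your proposal is therefore not comparable to ``the paper's own proof'' --- there is none --- but is rather an independent sketch of how one might reprove the $n=2$ case using the techniques developed here. As a sketch it is plausible in outline (the identification of $Z^{\gamma}$ with Prym-type character varieties on double covers and the Fourier-duality heuristic are the standard ingredients), but it is not a proof: you do not actually compute any $e(Z^{\gamma}/\Gamma)$, you do not justify that the fermionic shift equals $2g-2$ for every nontrivial $\gamma$, and you do not verify that the $B$-field twist on the $\sldos$ side is trivial (which is what makes the left-hand side the ordinary $E$-polynomial). Your own final paragraph concedes that carrying this out would require extending the induction of Section \ref{chapter:highgenus} to all double covers, which you do not do. So the proposal is a reasonable research plan, but it neither reproduces anything the paper proves nor itself constitutes a proof.
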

where the stringy E-polynomial $e_{st}$ for orbifolds $X/\Gamma$ was defined  in 
\cite{hausel-thaddeus:2003}, 
$$
e_{st}(X/\Gamma,u,v)=\sum_{\left[ \gamma \right]}E(X^{\gamma}/C(\gamma), L_{\gamma}^{B},u,v)(uv)^{F(\gamma)},
$$
where $B$ is a flat unitary gerbe, $C(\gamma)$ denotes the centralizer of $\gamma$ in $G$, $L_{\gamma}$ is a line bundle associated to $B$ and $F(\gamma)$ is an integer called the fermionic shift. Stringy Hodge polynomials arose in string theory to take into account the singularities of a variety with orbifold singularites (more generally, Gorenstein singularities). They agree with the E-polynomial of a crepant resolution, when there is one \cite{kontsevich:1995}.
Conjecture \ref{conj:mirrorepoly} for $n=2$ and  $d$ odd was already proved in \cite{hausel-rvillegas:2008} and it agrees with our result:  $\mathcal{M}_{B}^{d}(\sldos)$ is $\mathcal{M}_{-\Id}(\sldos)$ in our notation and it is smooth, so the stringy E-polynomial concides with the ordinary one. The difference between the E-polynomials of the $\sldos$ and the $\pgl$-character varieties $e(\mathcal{M}_{-\Id})$ is precisely the stringy contribution coming from the non-trivial elements of $\Gamma=\ZZ_{2}^{2g}$:
\begin{align*}
e(\mathcal{M}_{-\Id}(\sldos))-e(\mathcal{M}_{-\Id}(\pgl)) & = \sum_{\left[ \gamma \right]\neq 1}e(X^{\gamma}/C(\gamma),L_{\gamma}^{B})q^{F(\gamma)} \\ & = (2^{2g}-1)q^{2g-2}\left( \frac{(q-1)^{2g-2}-(q+1)^{2g-2}}{2} \right)
\end{align*}
so that $e_{st}(\mathcal{M}_{-\Id}(\sldos))=e_{st}(\mathcal{M}_{-\Id}(\pgl))$.

In addition, an interesting picture occurs when we consider these stringy differences for the whole range of moduli spaces $\mathcal{M}_{C}(\pgl)$ and $\mathcal{M}_{C}(\sldos)$ for arbitrary holonomy $C$ at the puncture. The E-polynomials of the $\sldos$-character varieties were computed in \cite{mamu2}. We recall a particular result: the Hodge monodromy representation of $\overline{X}_{4}/\ZZ_2$, given by
$$
R(\oX_{4}/\ZZ_2)= a^{g}T+b^{g}S_{2}+c^{g}S_{-2}+d^{g}S_{0} \in R(\ZZ_2 \times \ZZ_2)[q]
$$
where $T$ is the trivial representation and $S_{\pm 2}$ are trivial over $\pm 2$ and non trivial over $\mp 2$ and $S_{0}=S_{2}\otimes S_{-2}$, with coefficients \cite[Theorem 7]{mamu2}
\begin{align*}
a^{g} & = (q^{3}-q)^{2g-1}+\frac{1}{2}q^{2g-1} \left( (q+1)^{2g-1}-(q-1)^{2g-1} \right), \\
b^{g} & = 2^{2g-1}(q^{2}-q)^{2g-1}-2^{2g-1}(q^{2}+q)^{2g-1} +\frac{1}{2}q^{2g-1} \left( (q+1)^{2g-1}-(q-1)^{2g-1} \right) ,\\
c^{g} & = 2^{2g-1}(q^{2}-q)^{2g-1}+2^{2g-1}(q^{2}+q)^{2g-1}-\frac{1}{2}q^{2g-1} \left( (q+1)^{2g-1}+(q-1)^{2g-1} \right), \\
d^{g} & = (q^{2}-1)^{2g-1}-\frac{1}{2}q^{2g-1} \left( (q+1)^{2g-1} +(q-1)^{2g-1} \right),
\end{align*}
Note that $a^{g}=\tilde{a}^{g}$ and $d^{g}=\tilde{b}^g$. If we look at the differences of the E-polynomials of the $\sldos$-representation spaces $\overline{X}_{i}$, $e_{i}^{g}$, and the their respective $\pgl$-versions $\overline{Z}_{i}$, $\tilde{e}_{i}^{g}$, we observe the following relation with the Hodge monodromy representation of $\overline{X}_{4}/\ZZ_2$
\begin{align}
e_{0}-\tilde{e}_{0} & = (q^3-q)(2^{2g}-1)\left(\frac{(q^{2}-q)^{2g-2}+(q^{2}+q)^{2g-2}}{2}\right)=qc^{g}+b^{g} \nonumber\\
e_{1}-\tilde{e}_{1} & = (q^3-q)(2^{2g}-1)\left(\frac{(q^{2}-q)^{2g-2}-(q^{2}+q)^{2g-2}}{2}\right)=qb^{g}+c^{g} \\
e_{2}-\tilde{e}_{2} & = (2^{2g}-1)\left(\frac{(q^{2}-q)^{2g-1}-(q^{2}+q)^{2g-1}}{2}\right)=b^{g} \nonumber\\
e_{3}-\tilde{e}_{3} & = (2^{2g}-1)\left(\frac{(q^{2}-q)^{2g-1}+(q^{2}+q)^{2g-1}}{2}\right)=c^{g} \nonumber \\
e_{4,\lambda}-\tilde{e}_{4,\lambda} & = (2^{2g}-1)(q^{2}-q)^{2g-1} =b^{g}+c^{g} \nonumber
\end{align}
We note that the differences have a simple description in terms of $b^{g}$ and $c^{g}$, the non-invariant terms in $R(\oX_{4}/\ZZ_2)$. There is also a certain symmetry in the expressions: the differences for $e_{0}$ and $e_{1}$ and for the non-semisimple cases $e_{2}$ and $e_{3}$ get interchanged under the permutation $b^{g}\leftrightarrow c^{g}$, that leaves the parabolic case unaffected. The permutation corresponds to the map $\tau(z)=-z$, that induces a map of Hodge monodromy representations $\tau^{\ast}(R(\oX_{4}/\ZZ_2)=a^{g}T+c^{g}S_{2}+b^{g}S_{-2}+d^{g}S_{0}$. We can summarize it in the following theorem.
\begin{thm} \label{thm:mirrorsymetrricdiffs}
Let $G=\sldos$, and let $\hat{G}=\pgl$ be its Langlands dual group. The differences between the E-polynomials of the $G,\hat{G}$-character varieties associated to punctured Riemann surfaces can be expressed in terms of the coefficients of the Hodge monodromy representation of the fibration $\oX_{4}/\ZZ_2 \rightarrow \CC-\lbrace \pm 2 \rbrace$,
$$
R(\oX_{4}/\ZZ_2)=a^{g}T+b^{g}S_{2}+c^{g}S_{-2}+d^{g}S_{0}
$$
These differences are the following: 
\begin{align}
e_{0}-\tilde{e}_{0} & = (q^3-q)(2^{2g}-1)\left(\frac{(q^{2}-q)^{2g-2}+(q^{2}+q)^{2g-2}}{2}\right)=qc^{g}+b^{g} \nonumber\\
e_{1}-\tilde{e}_{1} & = (q^3-q)(2^{2g}-1)\left(\frac{(q^{2}-q)^{2g-2}-(q^{2}+q)^{2g-2}}{2}\right)=qb^{g}+c^{g} \\
e_{2}-\tilde{e}_{2} & = (2^{2g}-1)\left(\frac{(q^{2}-q)^{2g-1}-(q^{2}+q)^{2g-1}}{2}\right)=b^{g} \nonumber\\
e_{3}-\tilde{e}_{3} & = (2^{2g}-1)\left(\frac{(q^{2}-q)^{2g-1}+(q^{2}+q)^{2g-1}}{2}\right)=c^{g} \nonumber \\
e_{4,\lambda}-\tilde{e}_{4,\lambda} & = (2^{2g}-1)(q^{2}-q)^{2g-1} =b^{g}+c^{g} \nonumber
\end{align}
where $e_{i},\tilde{e}_{i}, i=1\ldots 4$ are the E-polynomials of the $\sldos$, $\pgl$-representation spaces for $C=\Id, -\Id, J_{+},J_{-}, \xi_{\lambda}$ respectively.
\end{thm}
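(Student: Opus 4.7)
The proof I would give is essentially a verification: both sides of each equation are explicit closed-form polynomials in $q$ (depending on $g$), so the result reduces to checking five polynomial identities. My plan would be to assemble all the needed ingredients in one place and then expand.

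First, I would record the formulas needed. From Theorem \ref{thm:epolysofei} we already have closed expressions for $\tilde{e}_0^g, \tilde{e}_1^g, \tilde{e}_2^g, \tilde{e}_3^g$ and for $\tilde{a}^g, \tilde{b}^g$ (with $\tilde{e}_{4,\lambda}^g = \tilde{a}^g + \tilde{b}^g$). From \cite{mamu2} I would quote the analogous $\sldos$ formulas for $e_0^g, e_1^g, e_2^g, e_3^g, e_{4,\lambda}^g$, together with the expressions for $a^g, b^g, c^g, d^g$ recalled in the paragraph just before the theorem statement. The identification $a^g = \tilde{a}^g$ and $d^g = \tilde{b}^g$, already observed in the text, is the key structural remark that makes the differences involve only $b^g$ and $c^g$.

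Next, I would carry out the five subtractions in parallel. The cleanest path is to rewrite each $\tilde{e}_i^g$ and $e_i^g$ in the common basis of monomials $(q^3-q)^{2g-1}, (q^2-1)^{2g-1}, (q^2\pm q)^{2g-1}, q^{2g-1}(q\pm 1)^{2g-1}$. In that basis the $(q^3-q)^{2g-1}$ and $(q^2-1)^{2g-1}$ parts match between the $\sldos$ and $\pgl$ sides (after dividing by the stabilizer contributions), and only the $(q^2\pm q)^{2g-1}$ and $q^{2g-1}(q\pm 1)^{2g-1}$ terms survive. Comparing these surviving terms with the explicit $b^g$ and $c^g$ from \cite[Theorem 7]{mamu2} produces each of the five equalities; the factor $q$ in front of $c^g$ (resp.\ $b^g$) in the cases $i=0,1$ arises from the extra power of $q$ that distinguishes $e_{i}^g$ from $e(\mathcal{M}_C)$ for the non-reduced stabilizers.

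The main subtlety — not difficulty, but what one must be careful about — is that the $\sldos$ formulas contain both a ``diagonal'' contribution (the terms matching $(q^3-q)^{2g-1}$ and $(q^2-1)^{2g-1}$) and a ``stringy'' contribution (the terms containing $2^{2g-1}(q^2\pm q)^{2g-1}$). The first piece cancels exactly with its $\pgl$ counterpart, leaving precisely the combinations $qc^g+b^g$, $qb^g+c^g$, $b^g$, $c^g$, $b^g+c^g$. A clean bookkeeping trick is to separate the ``invariant'' part $a^g T + d^g S_0$ from the ``non-invariant'' part $b^g S_2 + c^g S_{-2}$ of $R(\oX_4/\ZZ_2)$; by (\ref{eqn:eoX4Z2}) applied to $\oX_4$ and to $\oZ_4$, the invariant part already accounts for the $\pgl$-E-polynomials, so the differences must come from $b^g$ and $c^g$ alone. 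Once this structural observation is made, the final identification is a routine (if tedious) monomial match, which I would present as a single display of five checked equalities with an indication of how the symmetry $b^g \leftrightarrow c^g$ (induced by $\tau(z)=-z$ on the base) swaps the pairs $(e_0,e_1)$ and $(e_2,e_3)$ while fixing $e_{4,\lambda}$.
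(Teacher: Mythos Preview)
Your proposal is correct and is essentially the paper's own argument: the theorem is presented there as a direct observation obtained by subtracting the explicit closed formulas of Theorem \ref{thm:epolysofei} from the corresponding $\sldos$ formulas of \cite{mamu2}, after noting $a^{g}=\tilde{a}^{g}$ and $d^{g}=\tilde{b}^{g}$. Your additional structural remarks (separating the invariant and non-invariant parts of $R(\oX_4/\ZZ_2)$ and the $\tau$-symmetry) are a pleasant organizational gloss but do not change the method.
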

\begin{rem}
If we write $E_{i}=e_{i}-\tilde{e}_{i}, i=1\ldots 4$ for the differences between the $\pgl$ and the $\sldos$ cases, it is immediate to deduce that
$$
E_{0}+(q-1)E_{2}=qE_{4}, \qquad E_{1}+(q-1)E_{3}=qE_{4,\lambda}.
$$

The second identity is also immediate from Corollary \ref{cor:Hauselrel} and its $\sldos$-version in \cite{mamu2}, whereas the first can be derived from the identities
$$
e_{0}+(q-1)e_{2}=qe_{4,\lambda} + (q^{2}-q)^{2g}, \qquad \tilde{e}_{0}+(q-1)\tilde{e}_{2}=q\tilde{e}_{4,\lambda}+ (q^{2}-q)^{2g}.
$$

\end{rem}
For $G=SL(n,\CC)$ and $G=PGL(n,\CC)$, it is natural to conjecture an extension of Theorem \ref{thm:mirrorsymetrricdiffs} for arbitrary $n$. In that case, we would have that the representation spaces $X_{C}$ that correspond to the moduli spaces $\mathcal{M}_{C}(SL(n,\CC))$ fibre over $\mathcal{M}(F_{1},SL(n,\CC))\cong \CC^{n-1}$,
\begin{align*}
\oX^{g}(SL(n,\CC))  & \rightarrow \CC^{n-1} \\
(A_{1},B_{1},\ldots, A_{g},B_{g}) & \mapsto \text{charpoly}(C)= \text{charpoly} \left( \prod_{i=1}^{2g}[A_{i},B_{i}] \right)
\end{align*}
the isomorphism being given by the coefficients of the characteristic polynomial. The character varieties corresponding to the Higgs bundles moduli space, where $C=\zeta_{} \Id$ and $\zeta$ is a primitive $n$-th root of unity,  lie inside the discriminant locus $\triangle \subset \CC^{n-1}$, where the multiplicity of some of the eigenvalues is greater than one and there are non-semisimple conjugacy classes over those points. We can consider the set of parabolic character varieties with semisimple holonomy $C$ of multiplicities of type $(1,\ldots, 1)$, that is, such that all its eigenvalues are different. If we write $\xi_{\boldsymbol{\lambda}}=\text{Diag}(\boldsymbol{\lambda})$ for the diagonal matrix $C$ of eigenvalues $\boldsymbol{\lambda}=(\lambda_{1},\ldots, \lambda_{n-1})$  (recall that $\lambda_{n}=\lambda_{1}^{-1}\ldots \lambda_{n-1}^{-1}$, since $C\in SL(n,\CC)$), we get a fibration
$$
\oX_{\xi_{\boldsymbol{\lambda}}} \longrightarrow B
$$
where $B= (\CC^{\ast})^{n-1} - \bigcup_{i<j} \lbrace \lambda_{i}=\lambda_{j} \rbrace$. The symmetric group $S_{n}$ acts on the base permuting the eigenvectors and on the total space of the fibration $\oX_{\xi_{\boldsymbol{\lambda}}}$ by conjugation by a suitable permutation matrix. This produces a fibration $\oX_{\xi_{\boldsymbol{\lambda}}}/S_{n} \rightarrow B/S_{n}$, which has Hodge monodromy representation $R(\oX_{\xi_{\boldsymbol{\lambda}}})$. We conjecture
\begin{conj} \label{conj:Hodgemonmirrorsymmetry}
The difference $e(\oX_{C}(SL(n,\CC)))-e(\oX_{C}(PGL(n,\CC)))$ is described by the coefficients of the Hodge monodromy representation $R(\oX_{\xi_{\boldsymbol{\lambda}}}/S_{n})$.
\end{conj}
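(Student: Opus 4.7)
The plan is to extend the $n=2$ analysis of Theorem \ref{thm:mirrorsymetrricdiffs} by combining two commuting group actions on the representation space: the action of $\ZZ_n^{2g}$ coming from the center of $SL(n,\CC)$ acting coordinate-wise on the $2g$ generators, which produces the quotient $\oX_C(PGL(n,\CC)) = \oX_C(SL(n,\CC))/\ZZ_n^{2g}$; and the action of $S_n$ permuting the eigenvalues of $C$. The first action immediately yields
\[
e(\oX_C(SL(n,\CC))) - e(\oX_C(PGL(n,\CC))) = \sum_{k}(-1)^k\dim H^k_c(\oX_C(SL(n,\CC)))^{\mathrm{non\text{-}inv}}\, u^p v^q,
\]
i.e.\ the E-polynomial of the non-invariant part of cohomology under $\ZZ_n^{2g}$. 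The second action is naturally encoded in the Hodge monodromy representation $R(\oX_{\xi_{\boldsymbol{\lambda}}}/S_n)$, and because the two actions commute on the total space, they act simultaneously on the cohomology of every fibre.

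First I would treat the generic semisimple case $C = \xi_{\boldsymbol{\lambda}}$, where the fibration $\oX_{\xi_{\boldsymbol{\lambda}}} \to B$ is directly available. The key step is to refine $R(\oX_{\xi_{\boldsymbol{\lambda}}}/S_n)$ into a bigraded object with respect to $S_n$ (irreducibles indexed by Young tableaux of size $n$) and with respect to the characters of $\ZZ_n^{2g}$; then the $\ZZ_n^{2g}$-trivial pieces correspond to the $PGL$-cohomology, and the non-trivial ones account for the difference $e_{\boldsymbol{\lambda}}(SL) - e_{\boldsymbol{\lambda}}(PGL)$. This should produce an explicit closed formula generalizing $e_{4,\lambda} - \tilde e_{4,\lambda} = b^g + c^g$, with $b^g,c^g$ replaced by the coefficients of a distinguished subset of non-trivial $S_n$-isotypic components of $R(\oX_{\xi_{\boldsymbol{\lambda}}}/S_n)$. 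The residual $\tau$-symmetry exchanging $b^g \leftrightarrow c^g$ should be explained by the outer action $\boldsymbol{\lambda}\mapsto \zeta\boldsymbol{\lambda}$ for $\zeta$ a primitive $n$-th root of unity, which exchanges characters of $\ZZ_n^{2g}$ in a controlled way.

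Next I would extend to non-semisimple $C$ by stratifying the discriminant locus $\triangle \subset \CC^{n-1}$ and using handle-attachment relations in the spirit of (\ref{eqn:uno}). Running the same gluing simultaneously for $SL(n,\CC)$ and $PGL(n,\CC)$ and subtracting, one should obtain a linear system for the differences $E_C := e(\oX_C(SL)) - e(\oX_C(PGL))$ indexed by conjugacy types; the inductive step (as in Proposition \ref{prop:matrixrecurrence}) expresses each $E_C$ in terms of the semisimple-stratum differences already handled and the Hodge monodromy coefficients at genus $g-1$. The hope is that, by induction on $g$, every $E_C$ closes up in the subring of $\ZZ[q]$ generated by the non-$S_n$-invariant coefficients of $R(\oX_{\xi_{\boldsymbol{\lambda}}}/S_n)$.

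The main obstacle is two-fold. First, the representation theory of $S_n$ for $n\geq 3$ is substantially richer than for $n=2$, and it is not a priori clear which combinations of non-trivial $S_n$-isotypic components will appear in the differences; identifying the correct bookkeeping requires a conceptual understanding of how $\ZZ_n^{2g}$ and $S_n$ interact on cohomology, perhaps via the wreath product $\ZZ_n\wr S_n$ acting on the framed moduli. Second, the combinatorics of non-semisimple conjugacy classes in $SL(n,\CC)$ (indexed by partitions with eigenvalue data) is far more involved than the five-stratum picture $\{\Id,-\Id,J_\pm,\xi_\lambda\}$ used here, so the gluing formulas and the matrix recurrence $v^g = Mv^{g-1}$ will have many more components. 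Controlling this system, and in particular proving that the differences remain expressible only through $R(\oX_{\xi_{\boldsymbol{\lambda}}}/S_n)$ rather than through larger auxiliary monodromy data, is where I expect the real difficulty to lie.
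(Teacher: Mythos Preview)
The statement you are attempting to prove is labelled as a \emph{Conjecture} in the paper, and the paper offers no proof of it whatsoever. After stating Conjecture~\ref{conj:Hodgemonmirrorsymmetry}, the author only adds the remark ``It is expected that such differences obey a $S_{n}$-symmetric pattern as it occurred in the $n=2$ case.'' That is the entirety of what the paper says. There is therefore nothing to compare your proposal against: this is an open problem, not a proved result.

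What you have written is not a proof either, but a research programme. You yourself acknowledge this in the final paragraph, where you identify two substantial obstacles (the richer $S_n$ representation theory for $n\geq 3$, and the combinatorial explosion of conjugacy types) and explicitly say ``this is where I expect the real difficulty to lie.'' A proof cannot end with an enumeration of the difficulties you have not yet overcome. Concretely, your plan contains several steps that are hopes rather than arguments: you assert that the differences ``should'' close up in the subring generated by the non-$S_n$-invariant coefficients, and that the bookkeeping ``should'' be governed by a $\ZZ_n \wr S_n$ action, but you supply no mechanism forcing either of these. The heart of the conjecture is precisely the claim that no auxiliary monodromy data beyond $R(\oX_{\xi_{\boldsymbol{\lambda}}}/S_n)$ is needed, and your proposal does not contain any idea for why that should be true in general; for $n=2$ it was verified only by explicit computation after both sides were known in closed form.
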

It is expected that such differences obey a $S_{n}$-symmetric pattern as it occurred in the $n=2$ case.
%\begin{figure}
%\begin{tikzpicture}
%\draw [ultra thick] (-5,0) --(6,0);
%\draw [ultra thick] (-6,4) --(-5,0);
%\draw [ultra thick] (-6,4) --(5,4);
%\draw [ultra thick] (5,4) --(6,0);
%\draw [ultra thick] (0.5,2)--(0.5,6);
%\draw (3,2)--(3,6);
%\draw (-2,2)--(-2,6);
%\foreach \x in {-2, 0.5, 3}{
   % \node[cross,thick,minimum size=4pt] at (\x,1.7) {};}
    
%%\node [below] at (0.5,1.6) {$s$};
%\node [below] at (-2,1.6) {$-2$};
%\node [below] at (3,1.6) {$2$};
%\node [above] at (5,0.2) {$\CC$};
  
%\draw [fill] (-2,0) circle (0.06);
%\node [above] at (2,0.2) {2};
%\node [above] at (-2,0.2) {-2};
%\node [above] at (4,0.2) {$\mathbb{C}$};
%\end{tikzpicture}
%\end{figure}

\end{document}